\documentclass[a4paper,11pt]{article}
\usepackage[
a4paper,
top=1in,
bottom=1in,
left=1in,
right=1in]{geometry}
\usepackage[scaled=.92]{helvet}

\usepackage{hyperref}

\usepackage{graphicx} % Required for inserting images
\usepackage{ytableau}
\usepackage{mathrsfs}
\usepackage{tikz-cd}
\usepackage{amsmath}
\usepackage{quiver}
\usepackage[dvipsnames]{xcolor}
\usepackage{amsthm}
\usepackage{amssymb}
\usepackage{comment}
\usepackage{float}
\renewcommand\labelenumi{\upshape(\roman{enumi})}
\renewcommand\theenumi\labelenumi
\newcommand{\C}{\mathbb{C}}

\newcommand{\N}{\mathbb{N}}

\newcommand{\R}{\mathbb{R}}
\newcommand{\Z}{\mathbb{Z}}

\newcommand{\Horn}{\mathrm{Horn}}
\newcommand{\Honey}{\mathrm{Honey}}
\newcommand{\Frame}{\mathrm{Frame}}

\newtheorem{theorem}{Theorem}[section]
\newtheorem{lemma}[theorem]{Lemma}
\newtheorem{corollary}[theorem]{Corollary}
\newtheorem{proposition}[theorem]{Proposition}

\theoremstyle{definition} %makes text non-italicised

\newtheorem{remark}[theorem]{Remark}
\newtheorem{definition}[theorem]{Definition}
\newtheorem{claim}[theorem]{Claim}

\newtheorem{fact}[theorem]{Fact}

\newcommand{\Dora}[1]{{\color{red} \sf $\clubsuit$ Dora: [#1]}}

\title{Honeycombs and Sums of Hermitian Matrices, Revisited}
\author{Ankur Moitra\thanks{\texttt{moitra@mit.edu}. Supported in part by DARPA expMath, NSF-CCF 2430381, an ONR grant, and a David and Lucile Packard Fellowship.} \and Alexander Postnikov\thanks{\texttt{apost@math.mit.edu}.  Supported in part by NSF grant DMS-2054129.} \and Dora Woodruff\thanks{\texttt{dorawood@mit.edu}. Supported in part by NSF GRFP 2141064.}}
\date{\today}

\begin{document}

\maketitle

\begin{abstract}
We give a new proof of the celebrated theorem of Knutson and Tao that the spectra of triples $A, B, A+B$ of Hermitian matrices exactly correspond to positions
of boundary rays of honeycombs. Most importantly, our proof gives new insights into why honeycombs are related to Hermitian matrices in the first place. Our proof is axiomatic: We distill four essential properties shared by honeycombs and spectra of Hermitian triples, and show that any two objects sharing these four properties must be equivalent.  
%We define related objects called frameworks also satisfying the four axioms, which are reminiscent of Feynman diagrams. 
In this way, we argue that honeycombs are `model organisms' for Hermitian triples \---- they are families of objects satisfying the same defining properties, but in more obvious ways. 
\end{abstract}

%\newpage

\section{Introduction}
Given an $n \times n$ Hermitian matrix $A$, we can sort its eigenvalues into a nonincreasing $n$-tuple $\lambda \in \R^n$. In $1912$, Hermann Weyl asked the following fundamental question: Given the eigenvalues of Hermitian matrices $A$ and $B$, can we determine all possible eigenvalues of $A+B$? We refer to triples $(\lambda, \mu, \nu)$ of nonincreasing $n$-tuples as \textit{Horn triples} if there exist Hermitian matrices $A, B$ and $C$ with spectra $\lambda, \mu$ and  $\nu$ respectively satisfying $A+B=C$. 

Many famous inequalities in matrix analysis can be thought of as necessary conditions for Horn triples. For example, Weyl's inequality is the bound $\lambda_1 + \mu_1\geq \nu_1$ and more generally
 $$\lambda_i + \mu_j \geq \nu_{i+j-1}$$
for any $i + j -1 \leq n$. Similarly, the Ky Fan inequalities \cite{fan} are the bound
 $$\sum_{j =1}^r \lambda_j + \sum_{k = 1}^r \mu_k\geq \sum_{i=1}^r \nu_i$$
for any $r \leq n$. But what are the sufficient conditions? 
In $1962$, Alfred Horn \cite{horn} formulated a conjectural master answer to Weyl's question, describing such triples $(\lambda, \mu, \nu)$ as points inside a certain polyhedral cone. Horn's Conjecture waited $36$ years for a resolution. A major step was taken by Klyachko \cite{klyachko} and the final step was completed by Knutson and Tao when they proved the Saturation Conjecture in \cite{honeycombs1}. An important component of Knutson and Tao's solution was their theory of \textit{honeycombs}. Honeycombs, which are graphical representations of Berenstein-Zelevinsky patterns \cite{BZ},
are certain configurations of line segments inside the plane.
%$\{(x,y,z) \in \R^3 \mid x+y+z=0\}$. 
Knutson and Tao proved the following fundamental theorem:

\begin{theorem}[Knutson-Tao, \cite{honeycombs1}]\label{thm:main_thm}
    Let $\lambda, \mu$ and  $\nu$ be nonincreasing $n$-tuples of real numbers. Then there exist Hermitian matrices $A, B$  and $A+B$ with spectra $\lambda, \mu$ and $ \nu$ if and only if there exists a honeycomb with positions of boundary rays given by $\lambda, \mu$ and $ -\nu$. 
\end{theorem}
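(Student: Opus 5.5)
The plan is to prove the theorem axiomatically, as the abstract suggests. Write $\Horn_n$ for the set of Horn triples $(\lambda,\mu,\nu)$ and $\Honey_n$ for the set of triples $(\lambda,\mu,\nu)$ such that some $n$-honeycomb has boundary rays at positions $\lambda,\mu,-\nu$. I will isolate four properties shared by both families and show that they determine the family uniquely, by induction on $n$. The properties I intend to use are:

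\begin{enumerate}
\item \emph{Base case.} For $n=1$ both sets are exactly $\{(a,b,a+b)\}$.
\item \emph{Symmetry.} Both sets are invariant under the natural $S_3$ action and under translation. For matrices this comes from rewriting $A+B-C=0$. For honeycombs it comes from the $120^\circ$ rotational symmetry of the plane $x+y+z=0$.
\item \emph{Closed convex cone.} Both sets are closed convex polyhedral cones, or at least closed and stable under the relevant Minkowski-type operations. For honeycombs this holds because the set of honeycombs of a given tinkertoy type is cut out by linear equalities and inequalities. For Horn triples it is deduced from the other axioms, or quoted via the Klyachko/convexity results.
\item \emph{Recursive ("overlay/splitting") property.} A triple of size $n$ lies in the set if and only if it decomposes, along a degenerate facet, into a triple of size $k$ and a triple of size $n-k$ that both lie in the set. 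Concretely: if $\lambda_k>\lambda_{k+1}$, $\mu_k>\mu_{k+1}$, $\nu_k>\nu_{k+1}$, and the Ky Fan inequality for these indices holds with equality, then $A$ and $B$ share an invariant $k$-dimensional subspace and the problem splits as a direct sum. For honeycombs the analogue is that equality forces the honeycomb to be an overlay of a $k$-honeycomb and an $(n-k)$-honeycomb.
\end{enumerate}

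The key lemma is that any two families of cones satisfying (i)–(iv) coincide. I would prove it by induction on $n$. Assume $\Horn_m=\Honey_m$ for all $m<n$; axiom (iv) then identifies the two families on every "boundary" face of the form above. It remains to show that each set equals the convex hull of, or the cone determined by, its boundary faces. Equivalently, every facet of either cone arises from a splitting, so that both cones are cut out by the same inequalities, namely the ones generated inductively by the faces of lower-rank sets. I plan to argue this by starting at an arbitrary point of the set and moving in a generic direction. A closed convex cone whose lineality space is exactly the translation symmetry must exit through a facet. On that facet I would use a perturbation/rigidity argument to show that some Ky Fan-type equality is forced, so the facet is a splitting face.

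Verifying the axioms is the more concrete part of the work. For Hermitian matrices, (iv) uses the equality case of Ky Fan: if equality holds, the top-$k$ eigenspace of $C$ equals the top-$k$ eigenspaces of $A$ and $B$, so $A$ and $B$ are block diagonal. For honeycombs, (iv) is proved by following the degenerate edges to exhibit the overlay decomposition, using the honeycomb balancing condition.

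I expect the main obstacle to be the step that every facet comes from splitting, that is, ruling out "interior" facets that are not witnessed by a degenerate decomposition. For matrices, this is essentially the content of Horn's inequalities being the facets, so it must come out of the axioms rather than being assumed. My first attempt would be a variational argument on Horn triples. At a boundary point, the map $(U,V)\mapsto \mathrm{spec}(A+UBU^*)$ fails to be a submersion. The failure of surjectivity of its differential yields a nonzero Hermitian matrix commuting with $A$, $B$, and $C$, hence a common reducing subspace, hence a splitting. For honeycombs I would use the analogous fact: if a honeycomb is not locally movable in some direction, it must contain a degenerate configuration that realizes it as an overlay.
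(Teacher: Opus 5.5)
There is a genuine gap, located in your axiom (iv) and in the facet-based uniqueness argument you build on it. Splitting only at Ky Fan--type equalities does not account for the boundary of the Horn cone. Here ``Ky Fan--type'' means the Ky Fan inequalities together with their images under your $S_3$ and translation symmetries; for $n=3$ these are six inequalities, for $k=1,2$. Take $n=3$, $\lambda=\mu=(10,1,0)$, $\nu=(15,5,2)$. This is a Horn triple: it is the direct sum of $(1,1,2)$ and $((10,0),(10,0),(15,5))$. Weyl's inequality $\nu_3\le\lambda_2+\mu_2$ is tight, so $\nu$ is a boundary point of $\Horn_3(\lambda,\mu)$, and $\lambda,\mu,\nu$ all have distinct parts. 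Yet all six Ky Fan--type inequalities are strict at this triple. The splitting that actually occurs uses the interleaved index sets $(\{2\},\{2\},\{3\})$. So the step ``on that facet some Ky Fan-type equality is forced'' is false.

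There is a second failure. In the full cone of triples, a generic line can exit through a chamber wall such as $\lambda_1=\lambda_2$. That wall is a facet whose generic points are not splittable at all.

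The repair is to change the axioms rather than classify facets, since classifying facets amounts to reproving Horn's conjecture. Use a direct-sum axiom for arbitrary interleavings, $\nu\oplus\nu'\in f_{n+m}(\lambda\oplus\lambda',\mu\oplus\mu')$. Require splitting only at \emph{extreme points} of the fixed slice $f_n(\lambda,\mu)$, again with arbitrary interleavings $\lambda=\lambda'\oplus\lambda''$, $\mu=\mu'\oplus\mu''$, $\nu=\nu'\oplus\nu''$. Uniqueness is then immediate:
\begin{enumerate}
\item each vertex of $f_n(\lambda,\mu)$ splits;
\item its pieces lie in $g$ by induction;
\item direct sum puts the vertex in $g_n(\lambda,\mu)$;
\item convexity gives $f_n(\lambda,\mu)\subseteq g_n(\lambda,\mu)$.
\end{enumerate}
This is the paper's route. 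Chamber walls cause no trouble there, because only extreme points of the slice are used; at an extreme point with repeated parts of $\nu$, one perturbs all equal parts together.

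For the corrected splitting axiom, your matrix-side idea is sound, and in fact tidier than the paper's Perron--Frobenius argument. Let $P_i=Y_iY_i^*$ be the spectral projections of $C=A+B$. For the perturbation $A+e^{-tS}Be^{tS}$ one gets $\frac{d}{dt}\nu_i=\mathrm{tr}([P_i,B]S)$. If $\dim T_\nu\le n-2$, there is $c\in T_\nu^\perp$ not proportional to $(1,\dots,1)$. Then $X=\sum_i c_iP_i$ satisfies $\mathrm{tr}([X,B]S)=0$ for all skew-Hermitian $S$, and taking $S=[X,B]$ gives $[X,B]=0$. So the eigenspaces of the non-scalar $X$ reduce $A$, $B$, $C$ simultaneously, with no need for $\mu$ to have distinct parts. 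You still need $C$ to have simple spectrum here, so boundary points with repeated parts of $\nu$ require a separate argument; the paper uses closedness of the splittable locus and a path argument. Also, convexity of $\Horn_n(\lambda,\mu)$ must be quoted, not ``deduced from the other axioms''.

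The real missing idea is on the honeycomb side. ``Not locally movable implies a degenerate overlay'' is exactly what must be proved, and honeycombs with zero-length edges and multiplicities do not deform linearly. The paper's device is frameworks. Every honeycomb is represented by an embedded 3-valent bipartite graph with strictly positive edge lengths, where distinct edges may overlap. If that graph is connected, one can `breathe' a path joining two South rays with $\nu_i\neq\nu_j$: shift the positions of its edges by $\pm\epsilon$ according to their directions. With repeated parts, one uses $mm'$ such paths so that all copies of a repeated part move together. This exhibits a segment $[\nu-b,\nu+b]\subset\Honey_n(\lambda,\mu)$. Hence at a vertex every framework is disconnected, which is precisely the splitting.
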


The original proof of Theorem \ref{thm:main_thm} connects the representation theory of $GL_n(\mathbb{C})$, Littlewood-Richardson coefficients, geometric invariant theory and symplectic geometry. Since then, alternative proofs have appeared. For example, Derksen and Weyman \cite{quivers} gave a proof of the Saturation Theorem through semi-invariants of quivers. Belkale \cite{belkale} gave a geometric proof via Schubert calculus and intersection theory. The most distinct approach from Klyachko, Knutson and Tao's work is Speyer's.  In \cite{speyer}, Speyer replaced all representation theory with the theory of Vinnikov curves and Viro's patchworking method. 

In this article, we give a new proof of Theorem \ref{thm:main_thm}. Our goal is to give a more direct, elementary approach, going straight from Hermitian matrices to honeycombs and vice versa without using any algebraic geometry, Schubert calculus, or representation theory as a middleman. After all, the statement of Theorem \ref{thm:main_thm} is explainable to an undergraduate linear algebra student \---- but `why honeycombs?' is a harder question to answer. 

Our approach is axiomatic. We distill four properties, which we call \textit{base case, direct sum, convexity, and splitting}, and show that any two families of objects satisfying these four properties must be equivalent. 
We define \textit{frameworks}, which are objects related to honeycombs, although not equivalent to honeycombs.
Then we show that the three different families of objects, namely, Horn triples, honeycombs, and frameworks,
 satisfy the axioms. The proofs that honeycombs and frameworks satisfy the axioms are combinatorial and straightforward and only rely on having the right definitions. The proofs of these axioms for Horn triples, on the other hand, are more involved. In this way, we like to think of honeycombs
 and frameworks
 as \textit{model organisms} for Horn triples in that they are objects satisfying the same defining properties, but in more obvious ways. Thus honeycombs can 
 be a sandbox for exploring more fine-grained structural properties of Horn triples and Littlewood-Richardson coefficients. 

%We can think of this as a \textit{Black Box Problem}.   We have a black box, or a certain computational model that, for a triple $(\lambda,\mu,\nu)$, outputs
%`True' or `False'.   We do not know the internal mechanics of its decision-making process, but we know that, from general principles, the four axioms 
%should hold.   Then we present $3$ different models for the black box: Hermitian matrices, honeycombs, and frameworks.   
%This
%perspective is analogous to \textit{quantum physics}. 
%$\lambda,\mu,\nu$ represent measurements of momenta of scattering particles. 
%From a physics point of view, $\lambda,\mu,\nu$ represent measurements of momenta of a system of scattering particles,
%frameworks is a version of \textit{Feynman diagrams} and, for example, the conservation axiom is the 
%\textit{momentum conservation law.}

In Section \ref{sec:honeycombs_and_frameworks}, we recall two definitions of honeycombs, and define frameworks.
%related objects, which we call \textit{frameworks}.   
To illustrate the difference between the notions of honeycombs and frameworks, we use frameworks to simplify the proof of the Saturation Conjecture.
In Section \ref{sec:axioms}, we state our four axioms and show that any two families of objects satisfying the axioms must be equal. In Section \ref{sec:honeycomb_axioms}, we prove that our axioms hold for honeycombs and frameworks. Three of the axioms are immediate from the definitions; the fourth follows from the concept of frameworks. In Section~\ref{sec:Horn_triples}, we prove that Horn triples also satisfy our axioms, completing the proof of Theorem~\ref{thm:main_thm}. 

\section{The Horn problem}\label{subsec:horn}

Let us briefly summarize the history of the Horn Problem. For a more complete account, we direct the reader to Fulton's excellent survey \cite{fulton}. After Weyl posed the problem of characterizing the eigenvalues of sums of Hermitian matrices, necessary inequalities for Horn triples $(\lambda, \mu, \nu)$ were collected in the works of Lidskii, Weilandt, Thompson, Freed (see \cite{lidskii}, \cite{weilandt}, \cite{thompson}), and others. All of these inequalities that were discovered took the form
\begin{equation}\label{eq:lambda_mu_nu_inequality}
\sum_{i \in I} \lambda_i + \sum_{j \in J} \mu_j \geq \sum_{k \in K} \nu_k,
\end{equation}
where $I, J, K$ are certain subsets of $[n]:=\{1,\dots,n\}$ of the same cardinality. Two questions followed from this observation: 
\begin{enumerate}
    \item For which triples $(I, J, K)$ does this expression 
    %\eqref{eq:lambda_mu_nu_inequality} 
    yield a necessary inequality?
    \item Are these inequalities
    %, together with the obvious trace condition $\sum_i \lambda_i + \sum_i \mu_i = \sum_i \nu_i$
    %and the inequalities $\lambda_1\geq \cdots\geq \lambda_n$, $\mu_1 \geq \cdots\geq \mu_n$, and $\nu_1\geq\cdots\geq \nu_n$, 
    sufficient to characterize all Horn triples $(\lambda, \mu, \nu)$?
\end{enumerate}

Horn~\cite{horn} conjectured that the answer to the second question was positive, and conjecturally described all such triples $(I, J, K)$ via the following explicit 
recursive process.   
%Let $\mathrm\Horn_n\subset\R^{3n}$ denote the set of all Horn triples $(\lambda,\mu,\nu)$. 
For a subset $I=\{i_1<i_2<\cdots<i_k\}\subset[n]$, let $\pi_I$ be the partition $\pi_I := (i_k-k,\dots,i_2-2, i_1-1)$.

\begin{theorem}[Horn's Conjecture]  A triple $(\lambda,\mu,\nu)$ of $n$-tuples is a Horn triple if and only if 
$\sum \lambda_i + \sum \mu_i = \sum \nu_i$,
$\lambda_1\geq \cdots\geq \lambda_n$, $\mu_1 \geq \cdots\geq \mu_n$, $\nu_1\geq\cdots\geq \nu_n$, and inequality~\eqref{eq:lambda_mu_nu_inequality} holds for any triple $(I,J,K)$ of proper nonempty subsets of $[n]$ of the same cardinality $k=|I|=|J|=|K|<n$ such that $(\pi_I,\pi_J,\pi_K)$
is a Horn triple.
\end{theorem}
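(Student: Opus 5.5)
We prove Horn's Conjecture by induction on $n$, splitting it into necessity and sufficiency of the inequalities \eqref{eq:lambda_mu_nu_inequality}. The route goes through Theorem~\ref{thm:main_thm}, which identifies Horn triples with boundary data of honeycombs. The trace identity and monotonicity conditions are clearly necessary, since $\operatorname{tr}(A)+\operatorname{tr}(B)=\operatorname{tr}(A+B)$. For $n=1$ nothing more is required.

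\emph{Necessity (the Schubert-calculus direction).} Let $A+B=C$ be Hermitian with spectra $\lambda,\mu,\nu$, and let $(\pi_I,\pi_J,\pi_K)$ be a Horn triple of size $k<n$.
\begin{enumerate}
\item Associate to each of $A,B,C$ its complete eigenvector flag.
\item For each of $I,J,K$, form the Schubert variety in $\mathrm{Gr}(k,\C^n)$ relative to the appropriate flag.
\item By the inductive hypothesis, $(\pi_I,\pi_J,\pi_K)$ being Horn of size $k$ means there are Hermitian $k\times k$ matrices with these spectra. Combined with the saturation/LR correspondence, this gives a nonzero Littlewood--Richardson coefficient $c^{\pi_K}_{\pi_I\pi_J}\neq 0$, so the three Schubert classes have nonzero intersection.
\item Hence there is a $k$-plane $V$ in the triple intersection.
\item Compressing to $V$ and using the Rayleigh/Courant--Fischer characterization gives $\operatorname{tr}(A|_V)\le\sum_{i\in I}\lambda_i$, $\operatorname{tr}(B|_V)\le\sum_{j\in J}\mu_j$ and $\operatorname{tr}(C|_V)\ge\sum_{k\in K}\nu_k$. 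Since $\operatorname{tr}(C|_V)=\operatorname{tr}(A|_V)+\operatorname{tr}(B|_V)$, the inequality follows.
\end{enumerate}
To stay elementary, I would instead obtain the nonempty intersection in step 4 directly from honeycombs via Theorem~\ref{thm:main_thm}. A size-$k$ honeycomb with boundary $(\pi_I,\pi_J,-\pi_K)$ can be embedded, or ``puzzle-glued'', into the degenerate configuration. The resulting inequality is the face inequality of the honeycomb cone.

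\emph{Sufficiency.} By Theorem~\ref{thm:main_thm}, it suffices to show that if $(\lambda,\mu,\nu)$ satisfies all the listed inequalities, then a honeycomb with boundary $(\lambda,\mu,-\nu)$ exists.
\begin{enumerate}
\item The set of boundary data of honeycombs is a polyhedral cone, since honeycombs are defined by linear constraints and projection preserves polyhedrality.
\item So it suffices to show that every facet of this cone is of the stated form.
\item Take a boundary point $(\lambda,\mu,\nu)$ on a facet. The generic honeycomb over it degenerates, and by the splitting axiom (overlaid honeycombs) it decomposes as a direct sum of a $k$-honeycomb and an $(n-k)$-honeycomb.
\item Reading off the boundary edges of the $k$-piece gives index sets $I,J,K$. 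The facet is then exactly the equality case of \eqref{eq:lambda_mu_nu_inequality}.
\item The $k$-piece translates to a honeycomb with boundary $(\pi_I,\pi_J,-\pi_K)$, the largest-lift shape. By Theorem~\ref{thm:main_thm} and induction, this makes $(\pi_I,\pi_J,\pi_K)$ a Horn triple.
\end{enumerate}

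The main obstacle is step 5 of sufficiency: showing that the $k$-sub-honeycomb, after normalizing, has boundary exactly $(\pi_I,\pi_J,-\pi_K)$ rather than merely some triple with the same index data. I would try Knutson--Tao--Woodward puzzles here, or a rigidity argument showing that facets correspond to rigid honeycombs whose sub-pieces can be translated to integral $\pi$-data.
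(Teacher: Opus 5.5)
\textbf{Gap: step 5 of your sufficiency argument.} This step is not a technical loose end. It is the entire content of Horn's recursion beyond Theorem~\ref{thm:main_thm}.

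Steps 1--4 are essentially sound. Splitting a framework over a generic point of a facet of the honeycomb cone does identify the facet hyperplane as $\sum_{i\in I}\lambda_i+\sum_{j\in J}\mu_j=\sum_{k\in K}\nu_k$ for some $(I,J,K)$. Three routine points need adding:
\begin{enumerate}
\item The paper's splitting is stated only for extreme points of $\Honey_n(\lambda,\mu)$. However, breathing paths in a connected framework gives every direction $e_i-e_j$ when $\nu$ has distinct parts, so boundary points of the fiber also split.
\item You need a single $(I,J,K)$ that is valid on an open part of the facet.
\item You may have to pass to the complementary triple to get the correct sign.
\end{enumerate}

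The failure is in step 5. The $k$-piece has boundary $(\lambda_I,\mu_J,-\nu_K)$. Translating a honeycomb by $(a,b,c)$ only changes its boundary data to $(\lambda_I+a,\mu_J+b,\nu_K+a+b)$. Since $(\lambda_I,\mu_J,\nu_K)$ moves with the point on the facet and has arbitrary real gaps, it can never be normalized to the fixed integer triple $(\pi_I,\pi_J,\pi_K)$.

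In the Knutson--Tao--Woodward argument, which the remark in Section~\ref{subsec:horn} invokes, $(\pi_I,\pi_J,\pi_K)$ does not come from either piece. It comes from the combinatorial pattern in which the two pieces cross in the overlay, arranged via the largest lift so that they cross clockwise at $120^\circ$. That pattern is a puzzle with boundary labels $I,J,K$. You then need the further theorem that such puzzles are counted by $c^{\pi_K}_{\pi_I\pi_J}$, i.e.\ they correspond to integer $k$-honeycombs with boundary $(\pi_I,\pi_J,-\pi_K)$. Alternatively, you could use Klyachko's theorem that the cone is cut out by the inequalities with nonvanishing Schubert structure constants. Without one of these inputs, you have only shown that each facet has the form \eqref{eq:lambda_mu_nu_inequality} for \emph{some} $(I,J,K)$, not for one with $(\pi_I,\pi_J,\pi_K)$ a Horn triple.

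\textbf{Comparison with the paper.} The paper does not prove this statement itself. It quotes it as Klyachko plus Knutson--Tao saturation, and notes that Theorem~\ref{thm:main_thm}, the puzzle description of the honeycomb cone, and Theorem~\ref{thm:saturation} together would give an elementary proof.

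\textbf{Necessity.} Your necessity direction is fine in its main line:
\begin{enumerate}
\item Theorem~\ref{thm:main_thm}, Theorem~\ref{thm:saturation} and the Berenstein--Zelevinsky count give $c^{\pi_K}_{\pi_I\pi_J}\neq 0$.
\item A nonzero Schubert product forces the Schubert varieties for the (non-generic) eigenflags to meet.
\item Courant--Fischer on a common $k$-plane gives the inequality.
\end{enumerate}
This is the standard Schubert-calculus proof, not an elementary one. The honeycomb replacement you sketch (``puzzle-gluing into the degenerate configuration'') does not say how a honeycomb produces either the $k$-plane or the inequality. So it cannot yet substitute for step 4.
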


\noindent Horn proved his conjecture for $n= 3, 4$.  But the recursion quickly becomes complicated. For example when $n= 7$, Horn's recursive algorithm yields $2062$ inequalities (not necessarily all independent). 

Work of Thompson, Totaro, Fulton, Klyachko (\cite{totaro}, \cite{klyachko}) and others connected the Horn problem to Schubert calculus, as well as the representation theory of $GL_n(\C)$. Klyachko \cite{klyachko} proved Horn's Conjecture modulo one step: the Saturation Conjecture for the Littlewood-Richardson coefficients. Knutson and Tao's theory of honeycombs completed this last step, giving a full resolution to the Horn Conjecture. 
%Honeycombs are themselves based on the work of
%Berenstein and Zelevinsky \cite{BZ} and the notion of \textit{Berenstein-Zelevinsky patterns.}

Since Knuton-Tau's resolution of Horn's Conjecture, honeycombs and the related \textit{puzzles} and \textit{hives} from \cite{honeycombs2} have proven to be extremely useful models for understanding the Littlewood-Richardson coefficients, Schubert calculus, and spectral theory of Hermitian matrices. For instance, honeycombs lead to a proof that deciding the positivity of the Littlewood-Richardson coefficients can be solved in polynomial time \cite{algorithmic1} \cite{algorithmic2} (see \cite{algorithmic3} for further relationships with complexity theory). Puzzles have been used to study the cohomology of the $2$-step flag variety following conjectures of Knutson \cite{puzzles-2}, and hive-inspired objects called skeps were recently used by Speyer \cite{skeps} to solve a conjecture of Lam, Postnikov and Pylyavskyy \cite{log-concave} on Schur positivity. 

\begin{remark}
The works \cite{honeycombs1} and \cite{honeycombs2} provided an `elementary' combinatorial proof that 
the set of possible positions of boundary rays of honeycombs is described by the same recursive procedure as in Horn's Conjecture.
%implies an `elementary' combinatorial proof of Horn's conjecture.  
The argument uses an interplay between the notions of honeycombs and puzzles.
In a nutshell, it goes as follows:  The inequalities describing possible positions $(\lambda,\mu,\nu)$ of boundary rays of honeycombs of size $n$ correspond to combinatorial types of pairs of overlayed honeycombs $H_1$ and $H_2$ such that all line segments of $H_1$ intersect with line segments of $H_2$ at $120^\circ$ 
angles as we go clockwise from $H_1$ to $H_2$.   Such pairs can be identified with puzzles.   On the other hand, puzzles are in bijection with certain
\textit{integer} honeycombs of sizes  $< n$.   Finally, all possible integer positions of boundary rays of honeycombs can be represented by integer honeycombs
(the Saturation Conjecture.)
%This, together with the Saturation Conjecture (proved in \cite{honeycombs1} using a combinatorial argument),

Thus an `elementary' proof of Theorem~\ref{thm:main_thm} would provide an `elementary' solution of the Horn problem.
\end{remark}

\begin{comment}
Independently from \cite{honeycombs1},  \textit{web diagrams} and \textit{web functions}, which are closely related to honeycombs, were introduced in \cite{webs}.
Combinatorial constructions from \cite{honeycombs1} and \cite{webs} are based on the earlier work by Berenstein and Zelevinsky \cite{BZ} and the notions of 
\textit{Berenstein-Zelevinsky patterns} and \textit{Berenstein-Zelevinsky polytopes.}  
The work \cite{BZ} was one of the early predecessors of the theory of \textit{cluster algebras} [REFERENCE] by Berenstein, Fomin, and Zelevinsky.
Web diagrams lead to the introduction of \textit{plabic graphs} that play a key role in the study of combinatorics of the positive Grassmannain
[REREFENCE].    Plabic graphs appeared in many other areas of mathematics and physics, including statistical physics, electrical networks, 
theory of solitons, etc.  They turned out to be intimately related to the study 
of \textit{scattering amplitudes} in $\mathcal{N}=4$ supersymmetric Yang-Mills theory, \textit{Grassmannian formula} 
for the scattering amplitudes, and the study of the \textit{amplituhedron},
see [REFERENCE to Book}] and [REFERENCE TO NIMA, TRNKA].
In physics, plabic graphs appear as \textit{on-shell diagrams}.

%We hope that the ideas and constructions that we discuss in this paper, might also turn out to be helpful to link many different areas of 
%mathematics and physics.
\end{comment}

\section{Notation and conventions}\label{sec:notation}

The \textit{fundamental Weyl chamber} (of type A) is $\mathfrak{C}_n:=\{ \lambda \in \R^n \mid \lambda_1 \geq \lambda_2 \dots \geq \lambda_n\}$. 
Points $\lambda\in\mathfrak{C}_n$ 
with positive integer entries $\lambda_i$ are exactly integer partitions with $n$ parts.
In general, we can view points $\lambda\in \mathfrak{C}_n$ as ``$\R$-valued partitions''; and refer to them as simply `partitions'.

%Points of $\mathfrak{C}_n$ are given by partitions; points on the interior of $\mathfrak{C}_n$ are given by strictly decreasing lists of reals. This distinction between boundary points and interior points of $\mathfrak{C}_n$ will be important for us. Note that $\Horn_n(\lambda, \mu)$ is a subset of $\mathfrak{C}_n$. 

%For us, a \textit{partition} $\lambda$ is a weakly decreasing list of \textit{reals} $\lambda_1 \geq \lambda_2 \dots \geq \lambda_n$. Usually, a partition is a nonincreasing list of \textit{nonnegative integers}. We will abuse notation and allow $\lambda$ to have parts that are noninteger and negative. The \textit{length} of $\lambda$ is $n$. The \textit{parts} of $\lambda$ are the entries $\lambda_i$ of the list. We denote the set of length $n$ partitions by $\text{Par}_n$. 

%If $\lambda=(\lambda_1 \dots \lambda_n)\in \mathfrak{C}_n$ and $I = \{i_1, i_2 \dots i_m\} \subseteq [n]$, we use $\lambda_I$ to denote the partition $(\lambda_{i_1}, \lambda_{i_2} \dots \lambda_{i_m})$. The complement of $I$ in $[n]$ is denoted $I^c$, and $\lambda_{I^c}$ is defined analogously. For example, if $\lambda = (5,3,2,2)$, $I = \{1,3\}$, then $\lambda_I = (5,2)$ and $\lambda_{I^c} = (3,2)$. 

Consider two operations on partitions, which we call `sum' and `direct sum', and denote by the symbols `$+$' and `$\oplus$', respectively.
For two partition $\lambda,\mu\in\mathfrak{C}_n$, the \textit{sum} of $\lambda$ and $\mu$ is $\lambda+\mu:=(\lambda_1+\mu_1,\dots,\lambda_n+\mu_n)$.
For two partitions $\lambda\in\mathfrak{C}_m$ and $\mu\in\mathfrak{C}_n$,
the \textit{direct sum} $\lambda \oplus \mu\in\mathfrak{C}_{m+n}$ is given by concatenating $\lambda$ and $\mu$, then sorting entries so that they are weakly decreasing. For example, $(5,4,2)+(4,3,1)=(9,7,3)$ and 
$(5,4,2) \oplus (4, 3,1) = (5,4,4,3,2,1)$. The rationale for calling this operation a `direct sum' will be apparent later. 

%There is another way of `adding' two partitions: namely, $\lambda + \mu = (\lambda_1 + \mu_1, \lambda_2 + \mu_2 \dots \lambda_n + \mu_n)$. We denote this addition with the $+$ symbol. 

As in the introduction, we will call a triple $(\lambda, \mu, \nu)$ 
of spectra a \textit{Horn triple} if there exist Hermitian matrices $A, B, A+B$ with spectra $\lambda, \mu, \nu$. 

\begin{definition}\label{def:Horn_n}
    For $\lambda, \mu\in\mathfrak{C}_n$,  $\Horn_n(\lambda, \mu)$ is the set of all $\nu \in \mathfrak{C}_n$ such that $(\lambda, \mu, \nu)$ is a Horn triple. 
\end{definition}

Consider the plane $\R^3_{\Sigma = 0} := \{(x, y, z) \in \R^3 \mid x+y+z=0\}$. Inside the plane $\R^3_{\Sigma=0}$ we identify six distinguished vectors, which we associate to compass directions: $(1, -1, 0)$ is North, $(1, 0, -1)$ is Northeast, $(0, 1, -1)$ is Southeast, $(-1, 1, 0)$ is South, $(-1, 0, 1)$ is Southwest, and $(0, -1, 1)$ is Northwest, see Figure~\ref{fig:compass}. These directions are at $60^\circ$ angles to each other, rather than being perpendicular. We use this compass terminology nonetheless. 

%\textit{Honeycombs} will always be drawn inside the plane $\R^3_{\Sigma = 0}$.

Honeycombs are certain configurations of line segments and rays drawn inside the plane $\R^3_{\Sigma = 0}$ such that all line segments and rays 
are parallel to the distinguished compass directions.
We will see that the condition $x+y+z=0$ can be thought of as imposing the trace condition for Horn triples. 

\begin{figure}
\begin{center}
    \includegraphics[scale=0.75]{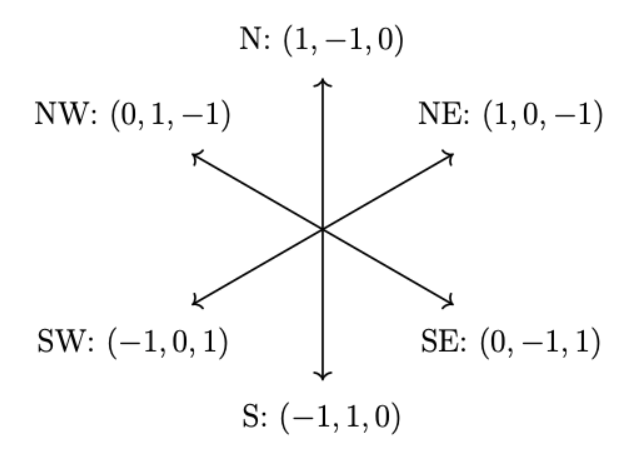}
\end{center}
\caption{Compass directions in $\R^3_{\Sigma = 0}$}
\label{fig:compass}
\end{figure}

\section{Honeycombs and frameworks}\label{sec:honeycombs_and_frameworks}

In this section, we give two cryptomorphic definitions of honeycombs. Honeycombs were introduced by Knutson and Tao in \cite{honeycombs1}.  
Independently, closely related objects called \textit{web diagrams} and \textit{web functions} were defined in \cite{webs}.  These works rely on the prior work 
by Berenstein and Zelevinsky \cite{BZ}.   Basically, honeycombs are graphical representations of \textit{Berenstein-Zelevinsky patterns.}

Then, we define \textit{frameworks}, certain embedded graphs similar to honeycombs; however, there are important differences between frameworks and honeycombs. In particular, there is at least one framework associated to every honeycomb, but this correspondence is not one-to-one. 
%In order to illustrate this distinction, we use frameworks to give a concise proof of the Saturation Conjecture.
Frameworks will prove useful in Section~\ref{sec:honeycomb_axioms}.

\subsection{Honeycombs as graph embeddings}

In this paper, the term `graph' means a finite simple graph $G=(V,E)$ where we allow some edges to be incident to only one node.  We call such single-node edges 
\textit{boundary rays} and call all other edges \textit{internal edges}.  A planar \textit{embedding} of $G$ 
is a map $\phi$ from the set $V$ of nodes of $G$ into a plane, together with the induced map from internal edges $\{u,v\}\in E$ to the line segments 
$[\phi(u), \phi(v)]$ on the plane, and the map from boundary rays $\{v\}\in E$ to half-infinite rays emanating from points $\phi(v)$.   
Note that we allow different nodes of $G$ to map to the same point on the 
plane; and we allow some edges of $G$ to map to line segments of length zero.

For a positive integer $n \in \N$, the \textit{honeycomb graph} $H_n$ is a certain 3-valent bipartite graph with $3n$ boundary rays, 
see Figure \ref{fig:H_n} for a definition-by-picture. 

\begin{figure}
    \begin{center}
    \includegraphics[scale=0.35]{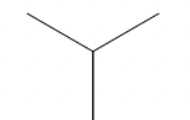}
        \includegraphics[scale=0.35]{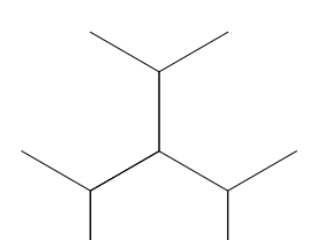}
        \includegraphics[scale=0.35]{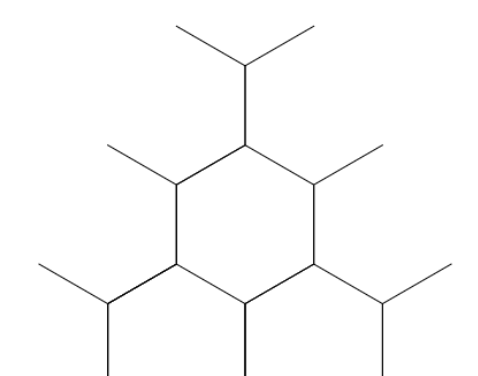}
        \includegraphics[scale=0.35]{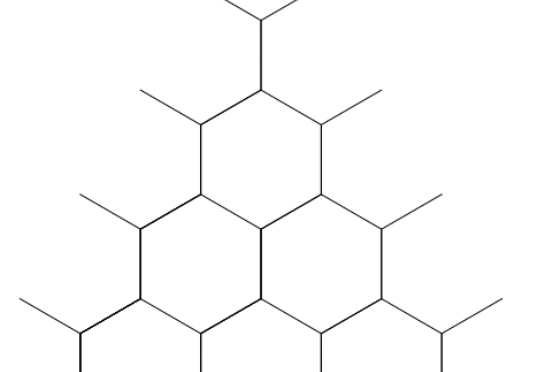} \includegraphics[scale=0.35]{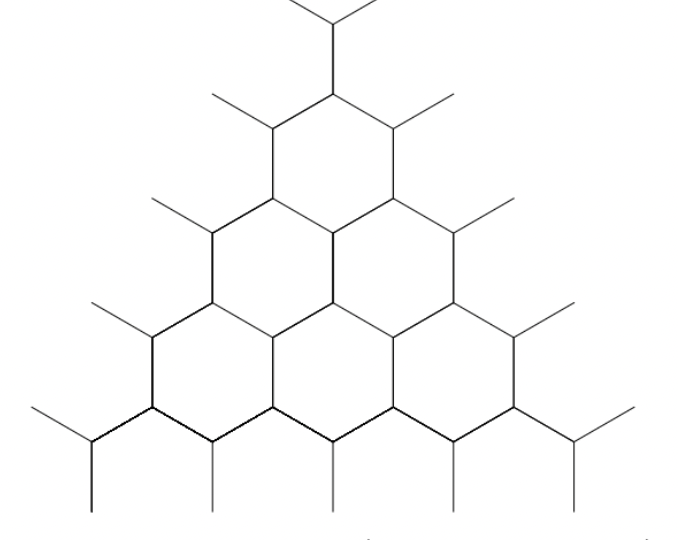}

    \end{center}
    \caption{From left to right are the graphs $H_1$, $H_2, H_3, H_4$, and $H_5$.}
    \label{fig:H_n}
\end{figure}

\begin{definition}[First definition of honeycombs, \cite{honeycombs1}]\label{def:honeycomb1}
    A \textit{honeycomb} is an embedding of $H_n$ into the the plane $\R^3_{\sum = 0}$ satisfying three properties: 
    \begin{enumerate}
        \item Each edge maps to a line segment (possibly of length zero) or a ray in one of the $6$ compass directions
        \item For each node $v$, the three edges adjacent to $v$ map to segments in directions $S, NW, NE$, or in directions $N, SW, SE$ 
        (reading edges as directed away from $v$)
        \item The $3n$ boundary rays map to rays in directions $S, NE$ or $NW$ (no North, Southwest, or Southeast boundary rays are allowed)
    \end{enumerate}
\end{definition}

Sometimes, we refer to a honeycomb which is an embedding of $H_n$ an $n$-honeycomb.

\begin{figure}[h]
\begin{center}
    \includegraphics[scale=0.7]{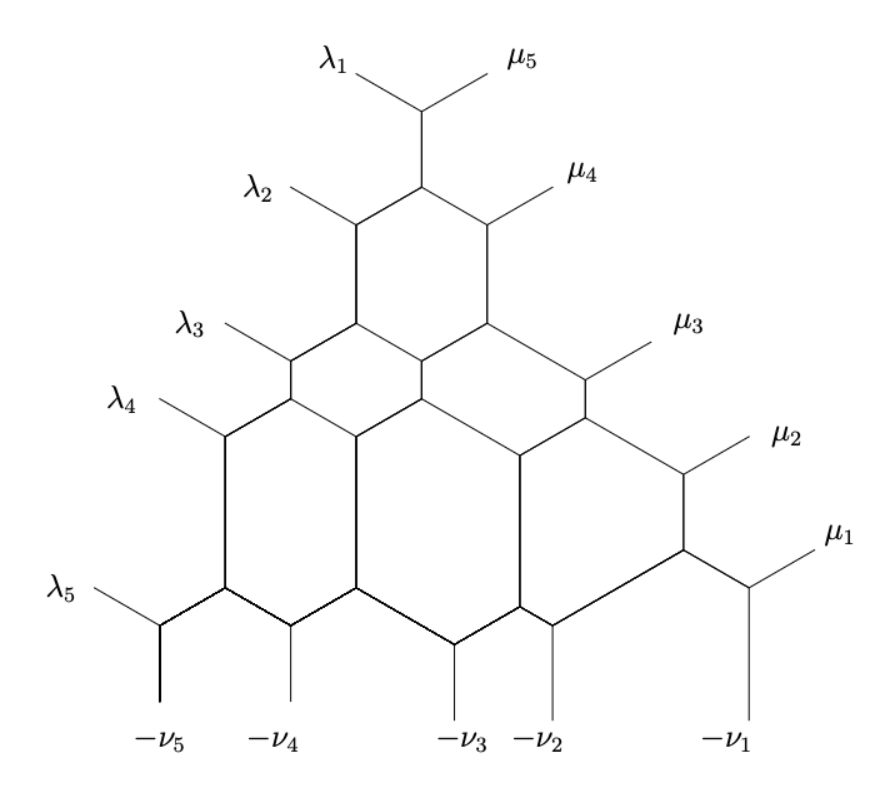}
\end{center}
\caption{An embedding of $H_5$ in which all edge lengths $l_e$ are nonzero, with associated partitions $(\lambda, \mu, \nu)$. Boundary rays are labeled by their positions.}
\label{fig:h5_degen}
\end{figure}

\begin{remark}
    In Definition \ref{def:honeycomb1}, some edges $e$ of $H_n$ are allowed to map to degenerate edges with length $l_e=0$. These `degenerate' honeycombs are combinatorially the most interesting cases. 
\end{remark}

All line segments and rays of a honeycomb lie on lines of the 3 types: 
$(a,*,*) := \{(x,y,z)\in\R^3_{\sum=0} \mid x = a\}$,
$(*,b,*) := \{(x,y,z)\in\R^3_{\sum=0} \mid y = b\}$,
and $(*,*,c) := \{(x,y,z)\in\R^3_{\sum=0} \mid z = c\}$.    We call real numbers $a$, $b$, and $c$ 
(i.e., the constant coordinates) the \textit{positions} of the corresponding 
line segments/rays.

How are honeycombs associated with partitions?  To each $n$-honeycomb we assign a triple $(\lambda,\mu,\nu)$ of partitions
$\lambda,\mu,\nu\in\mathfrak{C}_n$, 
where $\lambda_1,\dots,\lambda_n$ are the positions of its Northwest boundary rays,
$\mu_1,\dots,\mu_n$ are the positions of its Northeast boundary rays, and
$-\nu_1,\dots,-\nu_n$ are the positions of its South boundary rays.

Now that we have defined honeycombs, we fix a notational shortcut that parallels our notation $\Horn_n(\lambda,\mu)$.

\begin{definition}
    For $\lambda, \mu\in\mathfrak{C}_n$, $\Honey_n(\lambda, \mu)$ is the set of all $\nu \in \mathfrak{C}_n$ such that there exists an $n$-honeycomb, whose Northwest, Northeast, and South boundary rays have positions given by the entries of $\lambda, \mu, -\nu$, respectively. 
\end{definition}

We can now reformuate Theorem~\ref{thm:main_thm}, as follows.

\begin{theorem}\label{thm:main_reformulation}
For any $n\geq 1$ and any $\lambda,\mu\in\mathfrak{C}_n$, we have 
$$
\Horn_n(\lambda,\mu)=\Honey_n(\lambda,\mu).
$$
\end{theorem}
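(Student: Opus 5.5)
We will prove Theorem~\ref{thm:main_reformulation} axiomatically. The idea is to isolate four properties that a family $\mathcal{F}=\{\mathcal{F}_n(\lambda,\mu)\}_{n\ge1}$ of subsets of $\mathfrak{C}_n$ may satisfy, show that these properties determine the family uniquely, and then check them for both $\Horn$ and $\Honey$. The four axioms are as follows.
\begin{enumerate}
\item \emph{Base case:} $\mathcal{F}_1(\lambda,\mu)=\{\lambda+\mu\}$.
\item \emph{Direct sum:} if $\nu\in\mathcal{F}_m(\lambda,\mu)$ and $\nu'\in\mathcal{F}_k(\lambda',\mu')$, then $\nu\oplus\nu'\in\mathcal{F}_{m+k}(\lambda\oplus\lambda',\mu\oplus\mu')$.
\item \emph{Convexity:} the set of triples $\{(\lambda,\mu,\nu):\nu\in\mathcal{F}_n(\lambda,\mu)\}$ is a closed convex cone in $\mathfrak{C}_n^3$.
\item \emph{Splitting:} if $\nu$ lies on the boundary of $\mathcal{F}_n(\lambda,\mu)$ and $\lambda,\mu$ are generic (strictly decreasing), then the triple decomposes as a direct sum of smaller triples in $\mathcal{F}_m,\mathcal{F}_{n-m}$.
\end{enumerate}
For Hermitian matrices, the direct sum axiom is realized by block-diagonal matrices. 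Splitting reflects the fact that an extremal $\nu$ forces $A$ and $B$ to share an invariant subspace.

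The uniqueness argument goes by induction on $n$. Suppose $\mathcal{F}$ and $\mathcal{G}$ both satisfy the axioms and agree in all sizes below $n$.
\begin{enumerate}
\item Fix strictly decreasing $\lambda,\mu$. By convexity and closedness, $\mathcal{F}_n(\lambda,\mu)$ is a compact convex set inside the trace hyperplane.
\item By splitting, each boundary point of $\mathcal{F}_n(\lambda,\mu)$ is a direct sum of smaller triples. By the induction hypothesis and the direct sum axiom applied to $\mathcal{G}$, every such point also lies in $\mathcal{G}_n(\lambda,\mu)$. Hence $\partial\mathcal{F}_n(\lambda,\mu)\subseteq \mathcal{G}_n(\lambda,\mu)$, and by symmetry the reverse inclusion holds for boundaries.
\item A compact convex set is the convex hull of its boundary, provided that boundary is nonempty. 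Since $\mathcal{G}_n(\lambda,\mu)$ is convex, we get $\mathcal{F}_n\subseteq\mathcal{G}_n$, and symmetrically $\mathcal{G}_n\subseteq\mathcal{F}_n$. Nonemptiness is needed here; it is supplied, for example, by $\lambda+\mu$, obtained from diagonal matrices or by direct-summing $1$-honeycombs.
\item Non-generic $\lambda,\mu$ are handled by closedness, taking limits of generic ones.
\end{enumerate}
One subtlety must be settled in the precise statement of splitting: "boundary" should mean the boundary relative to the affine trace hyperplane.

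For honeycombs, the axioms are checked as follows.
\begin{enumerate}
\item The base case is immediate: a $1$-honeycomb is a single vertex with three rays, and the constraint $x+y+z=0$ forces $\nu=\lambda+\mu$.
\item For the direct sum, overlay two honeycombs. Where their lines cross, resolve each crossing into a degenerate honeycomb vertex configuration, giving a valid embedding of $H_{m+k}$. The combinatorics of this resolution is where frameworks should help, since frameworks permit overlapping without resolving.
\item Convexity holds because the space of honeycombs is cut out by linear equalities (edge directions) and linear inequalities (edge lengths $l_e\ge0$), so its projection to boundary data is a polyhedral cone.
\item Splitting for honeycombs is the main combinatorial step. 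With generic $\lambda,\mu$, a boundary $\nu$ forces some linear functional to be tight, so some edge length is zero. I would try to show that this degenerate edge propagates into a chain of zero-length edges that separates the honeycomb into two honeycombs which are merely overlaid. That would exhibit the triple as a direct sum. Following the paper's hint, I would phrase this in terms of frameworks.
\end{enumerate}

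For Horn triples, the base case and direct sum are trivial. Convexity is the deep input. I would prove it via the Kirwan/moment-map convexity theorem, or alternatively deduce it from the Horn inequalities; this step needs the most care. Splitting would use the equality case of the Ky Fan–Lidskii type inequalities. If $\nu$ is on the boundary, a supporting inequality $\sum_I\lambda+\sum_J\mu\ge\sum_K\nu$ is tight. Its variational (Schubert-type) proof then yields a common invariant subspace of $A$ and $B$, so $A$ and $B$ split as block-diagonal matrices. The hardest step overall is establishing convexity and splitting for Horn triples without heavy machinery; for convexity I would first attempt an elementary argument via convexity of the moment map for the $U(n)$-action on pairs of coadjoint orbits.
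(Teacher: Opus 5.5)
Your overall architecture matches the paper's: four axioms, uniqueness by induction, block-diagonal matrices and overlays for direct sum, and convexity of $\Honey_n$ via Berenstein--Zelevinsky patterns. The genuine gap is your splitting axiom. You take ``boundary'' relative to the trace hyperplane, and with that notion the axiom is false for both $\Horn$ and $\Honey$, so it cannot be verified for either family.

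Since $\mathcal{F}_n(\lambda,\mu)\subset\mathfrak{C}_n$, every $\nu$ with a repeated part $\nu_i=\nu_{i+1}$ is such a boundary point, because moving along $e_{i+1}-e_i$ leaves the chamber. These triples need not split, even for strictly decreasing $\lambda,\mu$. Take $\lambda=\mu=(2,1,0)$ and $\nu=(5/2,5/2,1)$. This $\nu$ is the midpoint of the permutation points $(3,3,0)$ and $(2,2,2)$, so by convexity it lies in $\Horn_3(\lambda,\mu)$ and in $\Honey_3(\lambda,\mu)$. For $n=3$, any splitting contains a size-one piece with $\nu_k=\lambda_i+\mu_j$. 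The only options are $\nu_3=1=\lambda_2+\mu_3=\lambda_3+\mu_2$. They leave $((2,0),(2,1),(5/2,5/2))$ or $((2,1),(2,0),(5/2,5/2))$, and both violate $\nu_1\ge\max(\lambda_1+\mu_2,\lambda_2+\mu_1)=3$.

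The same example defeats your honeycomb plan. The only tight functional there is $\nu_1-\nu_2\ge 0$, which degenerates the edges between the two coincident South rays without separating anything.

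The paper avoids this by requiring splitting only at \emph{extreme} points, so your Step~3 becomes Krein--Milman. On the Horn side it uses the symmetrized notion of interior, $\mathrm{B}_\epsilon(\nu)\cap\mathfrak{C}_n\subset\Horn_n(\lambda,\mu)$. Under this notion $(5/2,5/2,1)$ is interior, since all Weyl inequalities and their duals are strict there. Every extreme point is then either a boundary point or $c(1,\dots,1)$.

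Even with the axiom corrected, your verification sketches miss the key ideas. For honeycombs, the paper argues contrapositively. Suppose some framework for $(\lambda,\mu,\nu)$ is connected. Breathe paths between South rays at two distinct positions, using all $mm'$ pairs to preserve multiplicities. This moves $\nu$ to $\nu\pm b$ with $\lambda,\mu$ fixed, so $\nu$ is not extreme; there is no need to chase zero-length edges.

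For Horn triples, your plan presupposes that at a boundary point some inequality $\sum_I\lambda_i+\sum_J\mu_j\ge\sum_K\nu_k$ with a Schubert-calculus proof is tight. That is Klyachko's description of the facets, which is exactly the heavy input the theorem is meant to bypass. You would also still need an equality-case argument producing a common invariant subspace. The paper instead does the following:
\begin{itemize}
\item It differentiates $\mathrm{spec}(A+e^{-tS}Be^{tS})$ in $S$.
\item It shows that a boundary point with $\mu,\nu$ multiplicity-free forces $\mathrm{rank}_{\C} G\le n-2$.
\item It uses $FF^*+GG^*=I$ and Perron--Frobenius to decompose the doubly stochastic matrix $FF^*$, then $F$, then the eigenvector matrix $Y$.
\item It handles repeated parts by closedness of the set of splittable triples.
\end{itemize}

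Finally, your Step~4 needs lower semicontinuity of $(\lambda,\mu)\mapsto\mathcal{F}_n(\lambda,\mu)$, not just closedness. The paper never restricts to generic $\lambda,\mu$. It therefore needs neither lower semicontinuity nor joint convexity of the cone of Horn triples, and the latter is more than Kirwan's theorem for fixed orbits gives.
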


There are two (linearly equivalent) ways to parametrize honeycombs.   
We can parametrize an $n$-honeycomb by the array of positions $(p_e)$ assigned to all edges $e$ of $H_n$ (including the boundary rays).
Alternatively, for given $\lambda,\mu,\nu$, a honeycomb is uniquely determined by the array $(l_e)$ of edge lengths $l_e$ 
(divided by ${1\over\sqrt{2}}$) for all internal edges $e$ of $H_n$.
The arrays $(l_e)$ of numbers parametrizing honeycombs are exactly Berenstein-Zelevinsky patterns.

\begin{definition}[\cite{BZ}]\label{def:BZ_patterns}
For a triple $\lambda,\mu,\nu\in\mathfrak{C}_n$, 
a (real-valued) \textit{Berenstein-Zelevinsky pattern} is an array $(l_e)$ of non-negative real numbers $l_e\geq 0$
assigned to internal edges $e$ of the graph $H_n$ such that, for any hexagon in $H_n$, the six numbers $l_1,\dots,l_6$
corresponding to its edges (listed in, say, a clockwise order) satisfy the \textit{hexagon condition:}
\begin{equation}
\label{eq:hex_condition}
l_1 + l_2 = l_4+l_5,\
l_2 + l_3 = l_5 + l_6,\
l_3 + l_4 = l_6 + l_1,
\end{equation}
and, for a pair of adjacent boundary rays in the Northwest direction corresponding to $\lambda_i$ and $\lambda_{i+1}$
and the two edges $e$ and $e'$ in the shortest path between these rays, we have the \textit{boundary condition:}
$$
l_{e} + l_{e'} = \lambda_i - \lambda_{i+1},
$$
and also similar boundary conditions for Northeast and South boundary rays corresponding to parts of $\mu$ and $-\nu$.
\end{definition}

Clearly, honeycombs and Berenstein-Zelevinsky patterns are in one-to-one correspondence.
Notice that Berenstein-Zelevinsky patterns $(l_e)$ are described by linear equations 
(hexagon and boundary conditions) and the inequalities $l_e\geq 0$.
We will use this observation in Proposition \ref{prop:polytope}, which says that 
the set $\Honey_n(\lambda,\mu)$ of positions of South rays of honeycombs 
with fixed positions of Northwest and Northeast boundary rays forms a convex polytope. 
(This fact is less immediate when working with the second definition of honeycombs.)

%Along each compass direction, there is always a constant coordinate. Reading off the constant coordinates of the Northwest, Northeast, and then South boundary rays, we obtain a triple of partitions to a honeycomb. 

\subsection{Honeycombs as zero-tension diagrams}\label{def:honeycomb2}

The second perspective on honeycombs also follows \cite{honeycombs1} and the 
notion of web functions from \cite{webs}.
A \textit{diagram} is a collection of line segments, either finite or half-finite, in $\R^3_{\sum=0}$, such that each segment is drawn along one of the compass directions. Each segment is labeled by a positive integer, its \textit{multiplicity}. The half-infinite segments are again called \textit{boundary rays}. 

Fix a diagram $D$. For any point $p \in \R^3_{\sum = 0}$, we say that $p$ is a \textit{zero-tension point of $D$} if, around some sufficiently small neighborhood of $p$, $D$ is a (possibly empty) finite union of segments emanating from $p$, such that the direction vectors of these rays multiplied by their multiplicities sum to $0$.
Equivalently, the segment multiplicities $m_1,\dots,m_6\geq 0$ of a zero-tension point $p$ (for the 6 compass directions listed, say, in the clockwise order) should satisfy Berenstein-Zelevinsky's hexagon condition \eqref{eq:hex_condition}. 
%\begin{equation}
%\label{eq:hex_condition}
%m_1 + m_2 = m_4+m_5,\
%m_2 + m_3 = m_5 + m_6,\
%m_3 + m_4 = m_6 + m_1
%\end{equation}
In Figure \ref{fig:nbds}, we show all the possible small neighborhoods around zero-tension points up to rotations. 

\begin{figure}
    \begin{center}
        \includegraphics[scale=0.7]{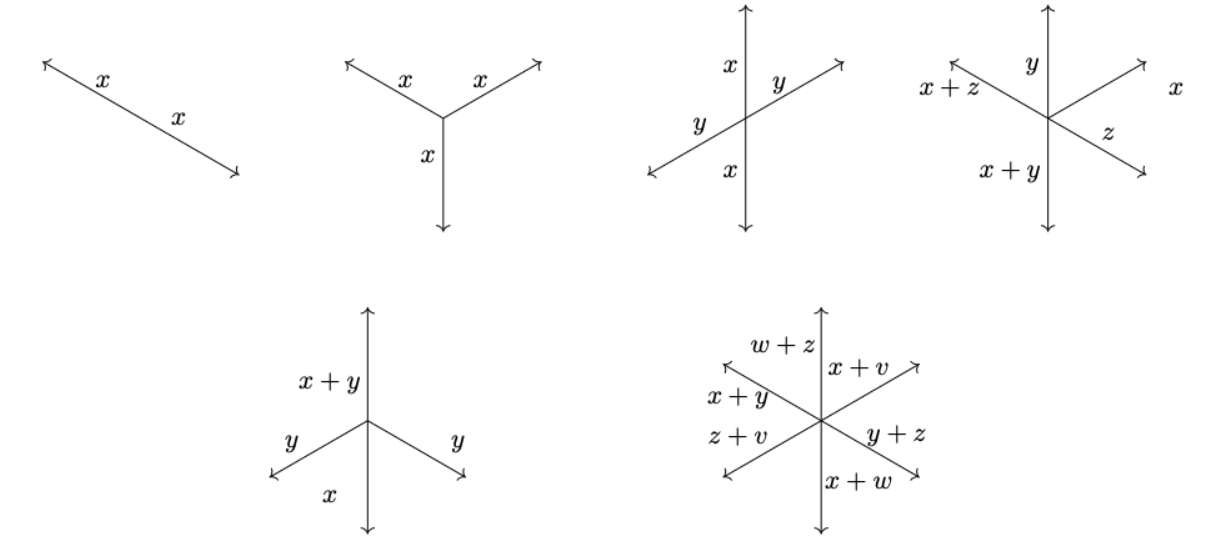}
    \end{center}
    \caption{Image from \cite{honeycombs1}.   Up to rotations, all the possible small neighborhoods around points of a honeycomb. Segments are labeled by their multiplicities.   Actually, all of these types are special case of the last one; they are obtained by setting some of the parameters $x,y,z,v,w$ to zero.}
    \label{fig:nbds}
\end{figure}

\begin{definition}[Second definition of honeycombs, \cite{honeycombs1, webs}]\label{def:honeycomb2}
    A diagram $D$ is a \textit{honeycomb} such that
    \begin{enumerate}
        \item Every point in $\R^3_{\sum = 0}$ is a zero-tension point of $D$
        \item There are only finitely many vertices of $D$ (points with more than two rays emanating outwards) and 
        \item All boundary rays are in the South, Northwest, and Northeast directions
    \end{enumerate}
\end{definition}

For every diagram, there is a corresponding measure on $\R^3_{\sum = 0}$, given by summing the Lebesgue measures on each segment multiplied by the corresponding multiplicities.
We consider two diagrams \textit{equivalent} if their measures coincide outside of a finite collection of points.
The notion of this measure gives us a natural way of \textit{adding} two diagrams: namely, sum their associated measures. This addition of diagrams will be key for us in Section \ref{sec:honeycomb_axioms}.

%It would be slightly more accurate to call a honeycomb an \textit{equivalence class} of diagrams, where two diagrams are considered equivalent if they induce the same measures on $\R^3_{\sum = 0}$. 
%This distinction will not be important for us. 

%An advantage of Definition \ref{def:honeycomb2} is that one can completely characterize small neighborhoods of honeycombs.  The following 

%two easy claims appear in \cite{honeycombs2}, cf.~\cite{webs}.

%\begin{proposition}
%\label{prop:local_char}
%    Let $H$ be a honeycomb according to Definition \ref{def:honeycomb2}. For every point $p \in \R^3_{\sum = 0}$, there is a neighborhood of $p$ in which $D$ is one of the configurations shown in Figure \ref{fig:nbds}.
%\end{proposition}

\begin{proposition}
\label{prop:equiv_defs}
    Definitions \ref{def:honeycomb1} and \ref{def:honeycomb2} are cryptomorphic.  
    More precisely, for any embedding of $H_n$ 
    as in Definition \ref{def:honeycomb1} there exists a unique zero-tension diagram as in Definition \ref{def:honeycomb2}, and vice versa.
\end{proposition}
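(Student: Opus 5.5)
The plan is to prove both directions separately. Going from an embedding to a diagram is direct. Going from a diagram back to an embedding goes through a canonical generic form and a degeneration argument. Throughout we use the measure-theoretic notion of equivalence of diagrams, so ``unique'' means unique up to equivalence. In the reverse direction the embedding is unique only up to the combinatorial choice of which degenerate edges of $H_n$ collapse; the uniqueness that matters is of the triple of boundary positions and of the underlying measure.

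\emph{From embeddings to diagrams.} Given an embedding $\phi$ of $H_n$ as in Definition~\ref{def:honeycomb1}, take the pushforward measure: each internal edge of positive length contributes Lebesgue measure on its image segment, and each boundary ray contributes Lebesgue measure on its image ray. Where several edges overlap, their measures add, and this defines the multiplicities. This gives a diagram $D_\phi$; it is unique because equivalence is defined by the measure. We then check the three conditions of Definition~\ref{def:honeycomb2}.
\begin{enumerate}
\item Zero tension at a point $p$ follows by linearity. Each node of $H_n$ has its three outgoing unit directions $S, NW, NE$ (or $N, SW, SE$) summing to zero. Interior points of segments contribute opposite pairs. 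Summing over all nodes and edge-points mapping to $p$, and discarding zero-length edges (whose two endpoints coincide and contribute cancelling directions), gives zero tension at $p$.
\item Finiteness of vertices is immediate since $H_n$ is finite.
\item The boundary-ray condition is Definition~\ref{def:honeycomb1}(iii).
\end{enumerate}

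\emph{From diagrams to embeddings.} This is the harder direction. Given $D$ as in Definition~\ref{def:honeycomb2}, first count boundary rays. Projecting the zero-tension condition onto each coordinate shows that the total multiplicities of the $S$, $NW$ and $NE$ rays are equal. Call this common value $n$; this uses the fact that the sum over all vertices of the tension is the sum of the boundary directions. Next, resolve $D$ locally: at each vertex, the configurations of Figure~\ref{fig:nbds} can be written as superpositions of trivalent $Y$-junctions and crossing lines. These are the parameters $x,y,z,v,w$. This exhibits $D$ as the image of \emph{some} trivalent embedded graph with zero-length edges allowed. The remaining task is to show this graph can be chosen to be $H_n$. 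I would argue via BZ patterns. Fix the boundary positions $(\lambda,\mu,\nu)$ read off from $D$, and consider the polytope of BZ patterns from Definition~\ref{def:BZ_patterns}. Show it is nonempty and that its points realize exactly the diagrams equivalent to $D$. I would prove this by induction on the number of vertices of $D$, or on $n$: peel off the outermost layer, namely the leftmost $NW$ ray together with the chain of edges it forms with the lowest $S$ ray. This corresponds to the outer boundary path of $H_n$, and removing it leaves a honeycomb diagram with $n-1$ rays in each direction, which embeds $H_{n-1}$ by induction.

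\emph{Main obstacle.} I expect the main obstacle to be this peeling step: showing that in an arbitrary zero-tension diagram one can always extract a ``corner path'' of multiplicity one that matches the outer ring of $H_n$, while keeping the remainder a valid diagram. The degenerate vertices, with multiplicities greater than one and crossings, make this delicate. I would first handle generic diagrams, in which all multiplicities are $1$ and every vertex is trivalent, by this combinatorial argument. I would then treat general $D$ by perturbing the boundary positions to generic ones and using compactness: the BZ polytopes vary continuously, and limits of $H_n$-embeddings remain $H_n$-embeddings with some edge lengths becoming zero.
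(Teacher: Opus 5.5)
Your forward direction is fine and is what the paper calls immediate: push-forward measure, cancellation at zero-length edges, finiteness, and ray directions. The ray count showing that the $S$, $NW$, $NE$ multiplicities agree is a correct addition. The reverse direction, however, has a genuine gap.

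\begin{enumerate}
\item The BZ step is misstated. The BZ polytope with fixed $(\lambda,\mu,\nu)$ parametrizes \emph{all} $H_n$-embeddings with that boundary. It is in general positive-dimensional (for instance when a hexagon can breathe), and its points give different diagrams. So its points do not realize ``exactly the diagrams equivalent to $D$''. Its nonemptiness would only produce some $H_n$-embedding with the boundary of $D$, not one representing $D$.

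\item The reduction to the generic case does not work. Perturbing the boundary positions produces no diagrams near $D$. A perturbed triple need not be realizable at all. Even when realizations exist, they need not converge to $D$ rather than to another honeycomb with the same boundary. So compactness of BZ polytopes tells you nothing about $D$ itself. Moreover, generic boundary data do not make a diagram generic: a permutation honeycomb with distinct parts has $4$-valent crossing points, so your generic case would not even cover all diagrams with generic boundary. What you would actually need is that $D$ itself is a limit of trivalent multiplicity-one diagrams. Proving that density directly is essentially as hard as the proposition.

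\item The peeling step is left open even in the generic case, as you acknowledge.
\end{enumerate}

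The missing idea is to keep the reverse direction purely local, but with the right local model. Resolving a multiple point into $Y$-junctions and crossings is the framework construction. It generally produces graphs that are not $H_n$; the framework in Figure~\ref{fig:framework_ex} is a tree. The paper instead does the following:
\begin{enumerate}
\item Take each point $p$ whose outgoing multiplicities $m_1,\dots,m_6$ satisfy the hexagon condition.
\item Replace it by a copy of the honeycomb-like graph $H_{m_1,\dots,m_6}$ (so $H_n=H_{n,0,n,0,n,0}$) with all its edges collapsed to $p$.
\item Join these pieces along the segments of $D$, with a segment of multiplicity $m$ becoming $m$ parallel edges.
\end{enumerate}
The result is an embedding of $H_n$ in which the edges inside each piece have length zero. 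No generic case, induction on $n$, perturbation, or BZ polytope is needed. Your hedge on uniqueness in the reverse direction is harmless, since the paper's argument likewise only constructs an embedding.
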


\begin{proof}
The fact that the measure of any embedding of $H_n$ is the measure of a zero-tension diagram is immediate from the definitions.
The more interesting direction of Proposition \ref{prop:equiv_defs} is finding an embedding of $H_n$ for every zero-tension diagram. 
For a 6-tuple of nonnegative integers $m_1,\dots,m_6\geq 0$ satisfying the hexagon (or tero-tension) condition~\eqref{eq:hex_condition}, let $H_{m_1,\dots,m_6}$
be the `honeycomb-like' graph that has $m_1,\dots,m_6$ boundary rays in all six compass directions.    (The graph $H_{m_1,\dots,m_6}$ is 
similar, but more general than the honeycomb graph $H_n = H_{n,0,n,0,n,0}$.)
In order to convert a zero-tension diagram into an embedding of $H_n$, we need to replace every point $p$ with outgoing segment multiplicities
$m_1,\dots,m_6$ by the embedding of $H_{m_1,\dots,m_6}$ with all edges collapsed to a single point.
We give an example of this process in Figure \ref{fig:def1todef2}. 
\end{proof}

\begin{figure}
    \begin{center}
        \includegraphics[scale=0.75]{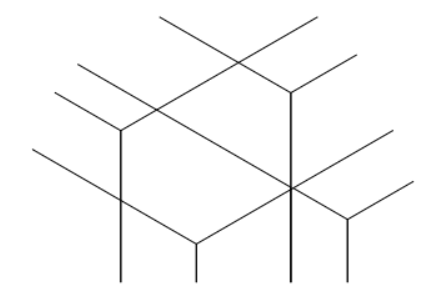}
        \includegraphics[scale=0.75]{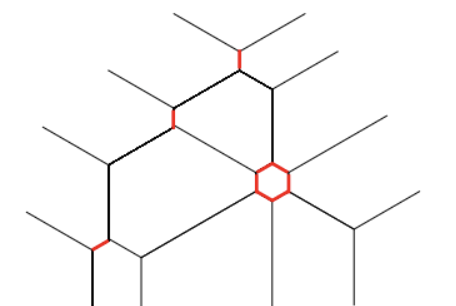}

    \end{center}
    \caption{On the left is a honeycomb satisfying Definition \ref{def:honeycomb2} (a zero-tension diagram). A priori, it does not look like an embedding of $H_4$ because some intersections of segments are not trivalent. At each such intersection, imagine adding short segments, `resolving' each non-trivalent intersection. We get the honeycomb on the right, which is more clearly an embedding of $H_4$. By degenerating the shortest segments in the right honeycomb to $0$, we can recover the honeycomb on the left.}
    \label{fig:def1todef2}
\end{figure}

\subsection{Frameworks}

Frameworks are closely related to honeycombs, though they are not exactly the same. 

\begin{definition}\label{def:frameworks}  
    A \textit{framework} is an embedding  of a finite simple 3-valent biparite graph (with nodes colored red and blue) into the plane $\R^3_{\sum = 0}$, such that
    \begin{enumerate}
        \item Edges are drawn along the compass directions.
        \item All edges have \textit{strictly positive} lengths.
        \item All red nodes are adjacent to a Northwest, Northeast, and South edge;
        and all blue nodes are adjacent a Southwest, Southeast and North edge (with directions read as pointing away from the node).
        \item Finally, all boundary rays are in the Northwest, Northeast, and South directions.
    \end{enumerate}
\end{definition}

\begin{figure}[h]
\centering
    \includegraphics[scale=0.7]
    {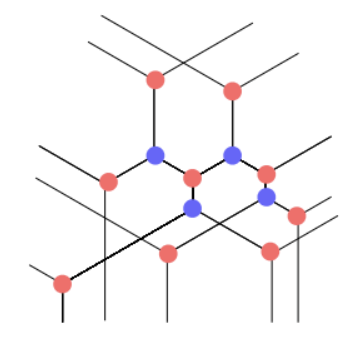}
    \caption{A framework with colored nodes; we give the nodes two colors to emphasize the fact that frameworks are bipartite. Notice that there are $6$ line segment intersections which are not nodes of the framework. As a graph, this particular framework is a tree.} \label{fig:framework_ex}
\end{figure}

Let us point out some subtle but important differences between Definition~\ref{def:frameworks} and prior definitions of honeycombs.
\begin{itemize}
    \item In Definition \ref{def:honeycomb1}, edge lengths $l_e$ were allowed to degenerate to zero. In Definition \ref{def:frameworks}, all edge lengths are strictly positive. 
    \item In Definition \ref{def:frameworks}, nodes are not the same thing as geometric intersections of segments. Every node lies on an intersection of segments, but not every intersection of segments yields a node.    The underlying graph of a framework is not necessarily a planar graph.
    \item In Definition \ref{def:honeycomb2} of honeycombs, each segment came with a multiplicity. Frameworks are given by simple graphs
    \textit{without} multiple edges. However, distinct edges are allowed to geometrically coincide or overlap, as are distinct nodes. 
\end{itemize}

These differences may seem minor, but will actually be significant in Section \ref{sec:honeycomb_axioms}. In general, frameworks are useful objects for studying degenerate honeycombs. 

A framework induces a measure on $\R^3_{\sum=0}$ in the same way as a diagram, and we say that a framework \textit{represents} a honeycomb $H$ if it induces the same measure as $H$. 

\begin{proposition}\label{prop:frameworks'}
    Every framework represents some honeycomb. Every honeycomb is represented by at least one framework. 
\end{proposition}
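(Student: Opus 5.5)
Both halves go through the second definition of honeycombs (Definition~\ref{def:honeycomb2}), together with the cryptomorphism of Proposition~\ref{prop:equiv_defs}.

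\textbf{Every framework represents a honeycomb.} Let $F$ be a framework and view it as a diagram in which every edge has multiplicity one. Overlapping edges contribute additively to the induced measure, so the multiplicity of a segment is the number of framework edges covering it. To see that every point $p$ is a zero-tension point, note that the tension at $p$ (the sum of outgoing direction vectors weighted by multiplicity) is a sum of local contributions from the edges through $p$. These are of three kinds.
\begin{itemize}
\item An edge passing through $p$ in its interior contributes two opposite vectors, which cancel.
\item An edge ending at $p$ contributes a single unit vector pointing away from $p$ along the edge.
\item Every edge endpoint at $p$ is a node of $F$ located at $p$. A red node contributes $NW+NE+S=0$, since $(0,-1,1)+(1,0,-1)+(-1,1,0)=0$, and a blue node contributes $N+SW+SE=0$.
\end{itemize}
Hence the tension vanishes at every $p$. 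Vertices of the diagram lie only at images of nodes or at crossings of edges, and there are finitely many of each because $F$ is finite. The boundary rays point only NW, NE and S by condition~(iv). So $F$ induces the measure of a zero-tension diagram, i.e.\ a honeycomb. Strict positivity of edge lengths plays no role in this direction.

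\textbf{Every honeycomb is represented by a framework.} Start with a honeycomb, taken via Proposition~\ref{prop:equiv_defs} as an embedding $\phi$ of $H_n$ as in Definition~\ref{def:honeycomb1}. Its nodes are already 2-colored by condition~(ii) with the required directions, and its measure is the honeycomb's measure. The only defect is that some internal edges may have length $l_e=0$. I would contract all zero-length edges: let $C$ be a connected component of the subgraph of zero-length edges, all of whose nodes sit at a single point $p$. Delete $C$ entirely. The edges leaving $C$ are finitely many positive-length edges or rays emanating from $p$, with direction multiset $\{m_1,\dots,m_6\}$ satisfying the zero-tension condition. I then need to reattach them with new nodes at $p$ so that each new node is red or blue of the correct type.

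Since all reattachment nodes sit at the same point, it suffices to partition the emanating directions into triples $\{NW,NE,S\}$ and $\{N,SW,SE\}$. Zero tension does not give such a partition directly: for instance, $N$ together with $S$ has zero tension but is a single straight line through $p$. The fix is to \emph{pass through}: pair opposite directions, say an $N$ half-edge with an $S$ half-edge, and merge them into one straight edge through $p$. This is allowed because frameworks need not be planar and nodes need not occur at crossings. The hexagon condition says $m_1-m_4=m_5-m_2=m_3-m_6=:k$. After cancelling opposite pairs, what remains is $k$ copies of one triple if $k>0$, and $|k|$ copies of the other triple if $k<0$. These form the required red or blue nodes. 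Merged pass-through edges keep positive length because each constituent half had positive length.

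\textbf{Cleaning up and the main obstacle.} The resulting graph must still be \emph{simple}, so I would dispose of parallel edges and of loops arising from merging two ends of the same path. A loop-free, cycle-free merge can be arranged by choosing pairings carefully. A parallel pair between nodes $u,v$ can be resolved by the standard trick of replacing the pair with a single edge while adjusting nearby nodes, or else avoided by pairing differently. I expect this bookkeeping — guaranteeing simplicity and no degenerate loops after pass-through merging — to be the main obstacle. If simplicity proves troublesome, I would first try to show that parallel edges in a framework can always be eliminated by rerouting along overlapping segments without changing the measure.
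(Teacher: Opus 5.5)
Your construction is correct and is essentially the paper's argument. The paper works directly with the zero-tension diagram and resolves each point into segments passing straight through plus trivalent red and blue nodes (Figure~\ref{fig:neighborhood_resolutions}). You reach the same local picture by starting from the $H_n$ embedding and contracting zero-length components. Your cancellation of opposite directions via $m_1-m_4=m_5-m_2=m_3-m_6=k$ then picks out one explicit resolution: the one with as many pass-throughs as possible. (The paper's parametrization also allows trading three crossing edges for a red--blue pair of nodes. That is one source of the non-uniqueness noted in the remark after the proposition.)

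The thing to fix is your last paragraph. The obstacle you anticipate does not occur, and one of your proposed remedies would be invalid.
\begin{itemize}
\item Each maximal chain of merged pieces is a concatenation of collinear positive-length pieces traversed in one fixed direction. So it is a positive-length segment between two distinct points, or a ray. It cannot close up, and it cannot be a full line, because there are no $\mathrm{N}$, $\mathrm{SW}$ or $\mathrm{SE}$ boundary rays.
\item Every node you build has exactly one incident edge in each of its three directions. Two edges joining the same nodes $u,v$ would both be the segment $[\phi(u),\phi(v)]$ and so leave $u$ in the same direction, which is impossible. Hence the graph is automatically simple and loopless.
\item Bipartiteness is automatic too. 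An edge leaving a red node in direction $d\in\{\mathrm{NW},\mathrm{NE},\mathrm{S}\}$ leaves its other endpoint in direction $-d\in\{\mathrm{SE},\mathrm{SW},\mathrm{N}\}$, so that endpoint is blue.
\item Cycles are allowed in frameworks (the saturation argument breathes them), so no cycle-freeness is needed.
\end{itemize}
Finally, your suggestion of replacing a doubled edge by a single edge would halve the multiplicity of that segment and change the measure. The result would then no longer represent the honeycomb. Fortunately this situation never arises.
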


\begin{proof}
    This proof is simplest when we view honeycombs as zero-tension diagrams (Definition \ref{def:honeycomb2}). One direction is easy: to go from a framework to a honeycomb, replace overlaps between $m$ edges with a single segment with multiplicity $m$, and forget the data of all nodes. We just need to verify that the resulting diagram is zero-tension. All nodes of frameworks yield zero-tension points (because nodes are always trivalent), all points along edges are zero-tension, and overlapping any combination of nodes and edges thus results in a zero-tension point. 

    To go from a honeycomb to a framework, we use the local characterization of zero-tension points shown in Figure~\ref{fig:nbds}.
     In a neighborhood of each point $p$, we need to decide which segments `pass through' $p$ as edges, and which segments are `diverted' through a node. Figure~\ref{fig:neighborhood_resolutions} shows how to do this in the most general case of a degree-six segment intersection.   All other cases are obtained from this case by setting some of the parameters $x,y,z,v,w$ to zero. 
    \begin{figure}[H]
    \begin{center}
 \includegraphics[scale=0.6]{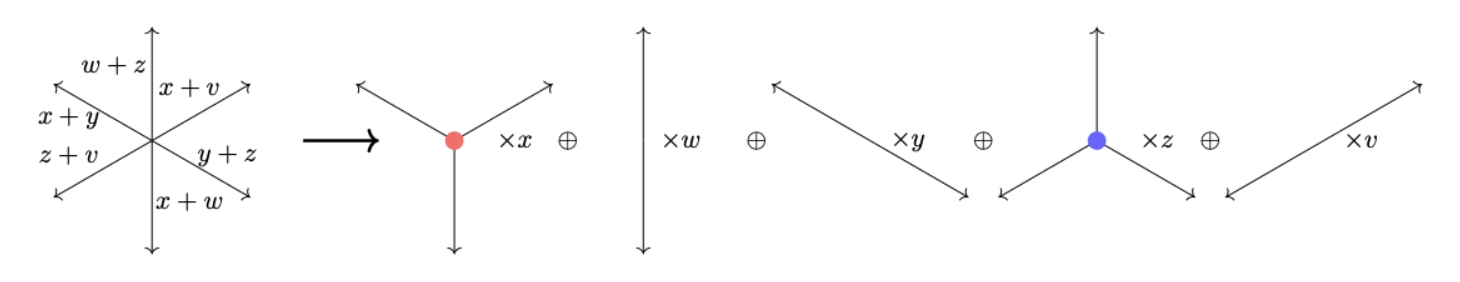}

    \end{center}
    \caption{Breaking up a neighborhood of a zero-tension diagram into edges and trivalent nodes of a framework.}
    \label{fig:neighborhood_resolutions}
\end{figure}

`Resolving' every neighborhood in this way, and then picking an arbitrary way to connect multiple edges 
    gives us a framework. 
\end{proof}

\begin{remark}
In general, the correspondence between honeycombs and frameworks is \textit{not} one-to-one.   When we convert a honeycomb into a framework,
%(as we discussed in the above proof), 
there can be several ways to split a neighborhood of a point in a zero-tension diagram into a collection of 3-valent nodes and edges; moreover, for each 
edge of multiplicity $m$ in a zero-tension diagram there are $m!$ ways to connect the corresponding edges of a framework.   
%These choices give 
%an upper bound for the number of frameworks corresponding to a honeycomb.

For example, the honeycomb shown on Figure~\ref{fig:h5_degen} corresponds to only one framework, because it is an embedding of graph $H_5$ with all nonzero edge
lengths $l_e>0$.  However, the honeycomb shown on the left of Figure~\ref{fig:def1todef2} corresponds to two different frameworks.
Using the notation from Figure~\ref{fig:neighborhood_resolutions}, the 6-valent vertex in this honeycomb corresponds
to $(x,y,z,v,w)$ equal to $(1,0,1,0,0)$ and also to $(0,1,0,1,1)$.  Thus there are 2 ways to `resolve' the 6-valent vertex: (a) as 
a pair of 3-valent nodes, or (b) as 3 crossing edges.
\end{remark}

\begin{definition}\label{def:frame}
For $\lambda,\mu\in\mathfrak{C}_n$, $\mathrm{Frame}_n(\lambda, \mu)$ is the set of all
$\nu\in\mathfrak{C}_n$ such that there exists a framework with $3n$ boundary rays,
whose Northwest, Northeast, and South boundary rays have positions given by
the entries of $\lambda,\mu,-\nu$,
respectively.
\end{definition}

The following claim is an immediate consequence of Proposition~\ref{prop:frameworks'}.

\begin{corollary}\label{cor:honey=frame}
    For any $n\geq 1$ and any $\lambda,\mu\in\mathfrak{C}_n$, we have
    $$
    \Honey_n(\lambda,\mu)=\mathrm{Frame}_n(\lambda,\mu).
    $$
\end{corollary}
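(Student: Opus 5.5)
The plan is to prove the two inclusions separately, each by applying one direction of Proposition~\ref{prop:frameworks'}. In both cases the only additional work is to check that the boundary data of $3n$ rays, with positions $\lambda,\mu,-\nu$ in the Northwest, Northeast and South directions, is preserved when we pass between a honeycomb and a framework. This should hold because both objects induce the same measure on $\R^3_{\sum=0}$.

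For $\Honey_n(\lambda,\mu)\subseteq\Frame_n(\lambda,\mu)$, take $\nu\in\Honey_n(\lambda,\mu)$ and an $n$-honeycomb $H$ realizing it. By Proposition~\ref{prop:equiv_defs}, view $H$ as a zero-tension diagram. By Proposition~\ref{prop:frameworks'}, choose a framework $F$ representing $H$. Since $F$ and $H$ induce the same measure, the following holds in each boundary direction (Northwest, Northeast, South): far from the finitely many vertices, the measure restricted to lines of that direction is a sum of Lebesgue measures along rays. For each position, the multiplicity of the ray there in $H$ equals the number of boundary rays of $F$ lying on that line. Two points need care here. First, all far-out pieces of $F$ are boundary rays, because $F$ is finite, so its internal edges are bounded. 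Second, the resolution in Figure~\ref{fig:neighborhood_resolutions} turns a multiplicity-$m$ ray into exactly $m$ framework rays. So $F$ has exactly $3n$ boundary rays, and their positions are the entries of $\lambda,\mu,-\nu$ with multiplicity. Hence $\nu\in\Frame_n(\lambda,\mu)$.

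For the reverse inclusion, take a framework $F$ with $3n$ boundary rays whose positions are given by $\lambda,\mu,-\nu$. By Proposition~\ref{prop:frameworks'}, $F$ represents a zero-tension diagram $D$ satisfying Definition~\ref{def:honeycomb2}. By the same measure-at-infinity comparison, $D$ has $n$ Northwest, $n$ Northeast and $n$ South boundary rays counted with multiplicity, at the positions $\lambda,\mu,-\nu$. Then Proposition~\ref{prop:equiv_defs} produces an embedding of some $H_m$. The main obstacle is confirming that $m=n$, that is, that the number of boundary rays of the diagram determines the size of the honeycomb graph. I would argue this from the construction in Proposition~\ref{prop:equiv_defs}: each boundary ray of multiplicity $k$ becomes $k$ rays of the graph, and $H_m$ has exactly $m$ rays in each of the three directions. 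Then $m=n$, and $\nu\in\Honey_n(\lambda,\mu)$.
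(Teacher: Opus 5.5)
Your proposal is correct and follows the paper's route. The paper states this corollary as an immediate consequence of Proposition~\ref{prop:frameworks'}, applied in both directions, with Proposition~\ref{prop:equiv_defs} implicitly identifying zero-tension diagrams with embeddings of $H_n$. Your extra bookkeeping makes explicit what the paper leaves unsaid: equality of measures forces the boundary rays, counted with multiplicity, to agree, and the resulting graph is $H_n$ rather than some other $H_m$.
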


Thus, in principle, the notion of a framework can replace the notion of a honeycomb.   

\begin{remark}
%From a certain point of view, frameworks might be more fundamental objects than honeycombs.  
Honeycombs are inherently planar objects.
But, a priori,  there is nothing in the Horn problem that suggests that it should be related to planarity.
On the other hand, frameworks are given by simple 3-valent bipartite graphs $G$ without planarity requirement.
%A priori, $G$ is an arbitrary simple 3-valent bipartite graph.    
%For each $e$ edge of $G$, there is parameter $p_e$, 
%its position.  These parameters $p_e$ should satisfy the conservation law:  the sum of positions $p_e$'s at each node is zero. 

In a sense, frameworks are reminiscent of \textit{Feynman diagrams} that represent \textit{scattering processes} for systems of interacting elementary particles.  From a physics perspective, positions $\lambda_i$, $\mu_i$, $-\nu_i$ of boundary rays are analogues of  
\textit{measurements} of momenta of particles,
%positions $p_e$ of internal edges represent momenta of virtual particles
nodes of two types correspond elementary particle interactions, and the trace 
condition $\sum \lambda_i + \sum \mu_i = \sum \nu_i$ correspond to the  \textit{momentum conservation law.}  
Here we do not want pretend that frameworks correspond to an actual theory in physics.  Note, however, that a similar
``pseudophysics'' language of interacting particles was used in the discussion of web diagrams in \cite{webs}.   
Later, web diagrams morphed into \textit{plabic graphs} \cite{tpgrass}, 
which turned out to be exactly the same combinatorial objects as \textit{on-shell diagrams} that compute  
the \textit{scattering amplitudes} in $\mathcal{N}=4$ supersymmetric Yang-Mills theory, see \cite{GrassAmpl}.   
It would be exciting to see if frameworks can be linked to an actual model in physics.
%We would not be very suprised if frameworks would also be related
%to a certain physics model.
%certain type of non-planar on-shell diagrams.
\end{remark}

\subsection{A concise proof of the Saturation Conjecture}

In this section, we use frameworks to prove the Saturation Conjecture.  As discussed earlier, the Saturation Conjecture was first proved by 
Knutson and Tao \cite{honeycombs1} using honeycombs; and it was the last step of the solution of the Horn Conjecture.
Note that our approach to proving Theorem~\ref{thm:main_thm} does not rely on the Saturation Conjecture.  
The rest of the paper is independent of this section.   We give this proof in
order to illustrate the difference between honeycombs and frameworks.
The main idea of the proof is the same as the idea of the original proof by Knutson and Tao \cite{honeycombs1}.   However, the notion of frameworks allows us to 
shorten and streamline the argument.

The Saturation Conjecture says that, for a triple of \textrm{integer} partitions $\lambda,\mu,\nu\in\mathfrak{C}_n\cap \Z^n$, if there exists a positive integer $k$ such 
that the Littlewood-Richardson coefficient $c_{k\lambda, k\mu}^{k\nu}$ is non-zero, then the Littlewood-Richardson coefficients $c_{\lambda\mu}^\nu$ is also 
non-zero.    According to Berenstein and Zelevinsky \cite{BZ}, the Littlewood-Richardson coefficent $c_{\lambda\mu}^\nu$ equals the number of integer
Berenstein-Zelevinsky patterns with boundary conditions given by $\lambda$, $\mu$, $\nu$.  Equivalently, $c_{\lambda\mu}^\nu$ is the number
of \textit{integer} honeycombs with positions of boundary rays given by parts of $\lambda$, $\mu$, $-\nu$, as discussed above.
Here a honeycomb is \textit{integer} if positions of all its edges are integer.
(Recall that \textit{position} of an edge is the constant coordinate of the corresponding line segments in $\R^3_{\sum=0}$.)
The Saturation Conjecture follows from the following result.

\begin{theorem}[Knutson-Tao \cite{honeycombs1}]\label{thm:saturation} If there exists a honeycomb with integer positions of boundary rays, 
then there exists an integer honeycomb with the same positions of boundary rays.
\end{theorem}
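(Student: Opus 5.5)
We follow the Knutson--Tao strategy of choosing an extremal honeycomb and showing it cannot have non-integer edges; the notion of frameworks replaces their `largest lift' analysis by a cleaner perturbation argument. Fix integer boundary data $(\lambda,\mu,\nu)$. By Proposition~\ref{prop:polytope}, the honeycombs with these boundary positions form a nonempty polytope $P$ (parametrized by Berenstein--Zelevinsky patterns $(l_e)$, or equivalently by edge positions $(p_e)$). We choose a vertex $h$ of $P$; more robustly, we choose $h$ maximizing a generic linear functional, e.g.\ the total weighted length $\sum_e l_e$ of the diagram, so that $h$ is an extreme point. By Proposition~\ref{prop:frameworks'}, we represent $h$ by a framework $F$. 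Among all frameworks representing $h$, we take one with the fewest nodes.

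The goal is to show that every edge of $F$ has integer position. Suppose not. Consider the subgraph $F'$ of $F$ consisting of edges with non-integer position. At each node the three positions sum to zero: a node of directions $NW,NE,S$ has positions $a,b,c$ with $a+b+c=0$, using the conventions that a line $(a,*,*)$ carries NW/SE edges, etc. Since two integers force the third to be an integer, every node of $F$ meets either $0$ or at least $2$ edges of $F'$. Boundary rays of $F$ have integer positions, so they are not in $F'$. Hence $F'$ is a nonempty finite graph with no endpoints, and so it contains a cycle $\gamma$ (possibly non-planar, passing through nodes where it uses two of the three edges).

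We then perturb along $\gamma$. At each node on $\gamma$ the two cycle edges lie on lines of two different types, so we can translate each edge of $\gamma$ perpendicular to itself by $\pm\varepsilon$. The sign pattern is determined consistently around the cycle by the rule that, at a node, shifting the position of one incident edge by $t$ forces the other cycle edge to shift by $-t$ (the third edge stays fixed). Alternating signs along a cycle is consistent because the framework is bipartite and cycles have even length; one must also check that the sign convention across red versus blue nodes is compatible. Since all edge lengths of $F$ are \emph{strictly positive}, which is exactly where frameworks beat honeycombs, small $\varepsilon$ of either sign yields valid frameworks $F_{\pm\varepsilon}$ with the same boundary. Their induced honeycombs $h_{\pm\varepsilon}$ average to $h$, and they differ from $h$ because the measure genuinely moves. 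Overlapping edges of $F$ could cancel in the measure, and here minimality of $F$ is used, or the cycle is chosen so that its displacement is visible. This contradicts extremality of $h$ in $P$. Therefore every edge position is integer, and $h$ is an integer honeycomb.

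The main obstacle is the last step: verifying that the perturbation of the framework genuinely changes the honeycomb measure, so that $h$ is really a nontrivial midpoint. Distinct framework edges may coincide geometrically, and moving one copy while another stays put changes the measure. The worry is that edges of $\gamma$ might move in a way that a different resolution of the same measure could undo. I would handle this directly. The measure of $F_{\varepsilon}$ restricted to a line containing an edge $e$ of $\gamma$ loses the mass of $e$, since the shifted copy lies on a parallel line $\varepsilon$ away. For small $\varepsilon$ that parallel line carries no other mass of $F$, so $h_\varepsilon\neq h$ automatically, without needing the minimality of $F$.
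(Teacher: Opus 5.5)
The gap is the sentence ``their induced honeycombs $h_{\pm\varepsilon}$ average to $h$.'' This is the only step that turns your perturbation into a contradiction with extremality, and it is not justified.

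Everything before it is fine. Positions sum to zero at every node, red or blue, so the non-integral edges contain a cycle $\gamma$. Two-sided breathing of $\gamma$ then gives valid frameworks $F_{\pm\varepsilon}$ with the same boundary. However, $F$ is the midpoint of $F_{\pm\varepsilon}$ only in the edge-position coordinates of the fixed graph $G$ underlying $F$. Extremality of $h$ refers to $P$, that is, to Berenstein--Zelevinsky coordinates on $H_n$. (As measures, $\tfrac12(h_\varepsilon+h_{-\varepsilon})\neq h$ outright.) The passage from a framework to its point of $P$ is only piecewise linear. At a vertex $h$, which is highly degenerate, it typically has a corner exactly at $F$.

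For example, suppose an edge of $\gamma$ lies on top of an edge not in $\gamma$. Then $h$ has a double segment there, recorded in $H_n$ by some vanishing lengths $l_e=0$. In both $h_{\varepsilon}$ and $h_{-\varepsilon}$ this double segment has split into two parallel simple segments, one on each side. So such $l_e$ become positive for \emph{both} signs of $\varepsilon$. The map $\varepsilon\mapsto h_\varepsilon$ is then a V-shaped path in $P$, which is entirely compatible with $h$ being a vertex. This is exactly the warning in the paper's remark after its proof: breathing is a straight line in the cone of frameworks with a fixed graph, but only piecewise linear in the space of honeycombs, where multiple segments are created and destroyed. Your last paragraph verifies $h_\varepsilon\neq h$, which was never the issue.

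The step cannot be repaired within your strategy. You use nothing about $h$ beyond extremality, so the argument would show that \emph{every} vertex of $P$ is integral. That is false in general: Berenstein--Zelevinsky (equivalently, hive) polytopes with integral boundary data can have non-integral vertices. This is why Knutson and Tao work with a specific ``largest lift'' functional and a one-sided deformation rather than an arbitrary vertex.

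The paper's route stays in framework space throughout:
\begin{enumerate}
\item Take $F$ with the fewest nodes among \emph{all} frameworks with boundary positions $\lambda,\mu,-\nu$, not just those representing a fixed $h$.
\item If its graph has a cycle, breathe it one-sidedly, in a direction in which some edge shrinks, until an edge collapses. This yields a framework with the same boundary and fewer nodes, a contradiction. So the graph is a forest.
\item Your own degree count then finishes the proof immediately: each node meets $0$ or at least $2$ non-integral edges, and boundary rays are integral, so there are no non-integral edges. This replaces the paper's leaf-removal induction.
\end{enumerate}
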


\begin{proof}
Assume that $\nu\in\Honey_n(\lambda,\mu)$ and all parts of $\lambda$, $\mu$, $\nu$ are integer.    Among many different frameworks
with positions of boundary rays given by $\lambda$, $\mu$, $-\nu$, let us pick a framework $F$ with the minimal possible number of nodes.
We claim that the underlying graph $G$ of the framework $F$ is a forest.

Let us prove this claim by contradiction.  Assume that the graph $G$ has a cycle $C$.   
%(The cycle $C$ should have an even length because $G$ is a biparite graph.)
Since a framework has \textit{strictly} positive edge lengths, for a sufficiently small real number $\epsilon>0$, we can shorten or lengthen some edges of $C$ and 
also the other edges incident to vertices of $C$ by $\epsilon$, so that the resulting deformed embedding of $G$ is a valid framework.  Informally, we can call
this deformation of framework `breathing' of the cycle $C$.   Let us make this `breathing' more precise using positions of edges in $C$. 
Pick a way to orient the cycle $C$.   If an edge $e\in C$ points in South, Northwest, or Northeast direction, then increase its position by $\epsilon$;
if an edge $e'\in C$ points in North, Southeast, or Southwest direction then decrease its position by $\epsilon$; positions of all other edges are preserved.
Let us now keep increasing $\epsilon$ until at least one of the edges of the framework collapses to a single point.  
(The collapse happens when $\epsilon \to$ the minimal edge length among all edges of the framework which shorten during this breathing.) 
This collapsed embedding gives a framework with the same positions of all boundary
rays and a smaller number of nodes.   (The number of nodes should decrease by at least $2$.)   Thus we get a contradiction.

Now that we know that the graph $G$ is a forest, we can express (by induction by removing the leaves of $G$) the positions of all edges of the
framework $F$ as integer linear 
combinations of positions of the boundary rays.   It follows that all positions of all edges should be integer.   Thus the framework $F$ corresponds to 
an integer honeycomb.
\end{proof}

\begin{remark}
    The original proof from \cite{honeycombs1} is based on a similar `breathing of cycle' argument.  We will also use a similar `breathing of path' procedure to prove 
    Theorem~\ref{thm:prop:breathing_paths} in Section~\ref{sec:honeycomb_axioms}.
%\end{remark}
%\begin{remark}  
%Note that frameworks are more convenient to use than honeycombs to describe the `breathing' procedure above and to 
%prove Theorem~\ref{thm:prop:breathing_paths} below.  
Note that the space of frameworks with a fixed graph $G$ is a certain polyhedral cone; and   
`breathing' gives just a straight line path in this cone.
%; the graph $G$ remains the same and edge collapse happens only at the last moment.
%when some edge/s of $G$ collapse(s).  
The space of honeycombs also forms a polyhedral cone, cf.~Proposition~\ref{prop:polytope},
or a union of cones if we think of honeycombs as zero-tension diagrams.
However, `breathing' may not give a straight line path in these spaces, but 
rather a piecewise-linear path.   
%Zero-tension diagrams (Definition~\ref{def:honeycomb2}) with a fixed combinatorial structure also form
%a polyhedral cone; so the space of all zero-tension diagrams can be viewed as a certain union of polyhedral cones.  Again, `breathing' gives a piecewise-linear,
%but in general not linear, path in the space of zero-tension diagrams.   
During `breathing', some edges of honeycombs may collapse or uncollapse and some multiple segments can be created or destroyed; see Figure~\ref{fig:breathing_of_path}.
But these collapses and multiple segments are \textit{not} a part of the structure of framework.  
In a sense, frameworks are more `flexible' objects than honeycombs.
\end{remark}

\begin{figure}\label{fig:cycle_breathing}
\centering
    \includegraphics[scale=0.85]{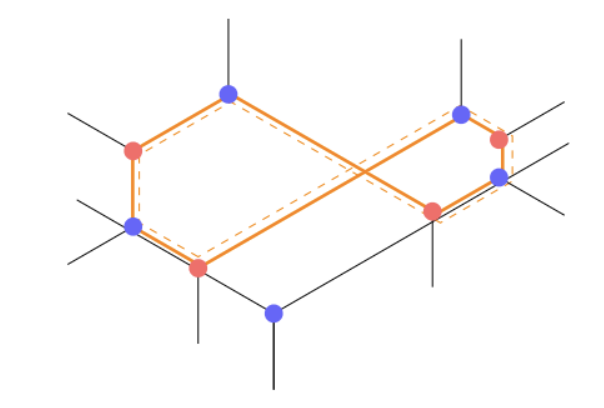}
    \caption{An example of a cycle within a framework which can `breathe,' transforming into the dotted line (only part of the framework is depicted). Notice that, in this example, breathing the cycle breaks segment multiplicities in the corresponding zero-tension diagram.}
\end{figure}

\begin{comment}
\begin{remark}
The fact that, for given $\lambda,\mu,\nu$, frameworks with a minimal number of nodes are given by graphs without cycles,
see proof of Theorem~\ref{thm:saturation} above, also has physics connotations.   In physics, the leading terms
in scattering amplitudes, or \textit{tree-level amplitudes}, are also given by acyclic graphs.
\end{remark}
\end{comment}

\subsection{An example: permutation honeycombs}

There is a special class of honeycombs worth pointing out. A \textit{permutation honeycomb} consists of $n$ $1$-honeycombs overlaid on top of each other. Pictorially, a permutation honeycomb looks like a collection of overlapping $Y$s; see Figure \ref{fig:permutation}. 

Given partitions $\lambda, \mu$, such honeycombs are defined by permutations $\sigma \in S_n$ as follows. Suppose that $\sigma(i) = j$; then, draw a $Y$-shape whose Northwest ray has constant coordinate $\lambda_i$ and whose Northeast ray has constant coordinate $\mu_j$. In Figure \ref{fig:permutation}, $\lambda_1$ is matched to $\mu_3$ by a $Y$, $\lambda_2$ is matched to $\mu_1$, and $\lambda_3$ is matched to $\mu_2$, so this honeycomb is associated to the permutation 
$(\sigma(1),\sigma(2),\sigma(3)) = (3, 1, 2)$.

\begin{figure}
    \begin{center}
        \includegraphics[scale=0.85]{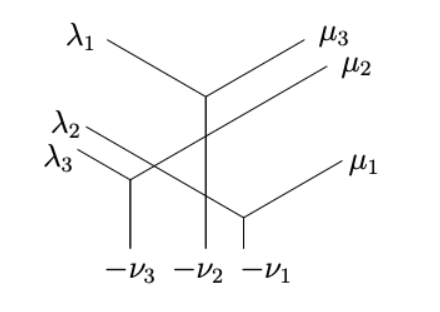}
    \end{center}
    \caption{A permutation honeycomb.
    %associated with the permutation $\sigma(1) = 3, \sigma(2) = 1, \sigma(3) = 2$.  
    Here $\nu_1=\lambda_2+\mu_1$, $\nu_2 = \lambda_1 + \mu_3$, $\nu_3= \lambda_3 + \mu_2$}
    \label{fig:permutation}
\end{figure}

Why are permutation honeycombs important? First, in Section \ref{sec:honeycomb_axioms}, we will see that every vertex of $\Honey_n$ strictly in the interior of $\mathfrak{C}_n$ is given by a permutation honeycomb. (Although not every permutation honeycomb yields a vertex!) Second, permutation honeycombs correspond to the simplest possible Horn triples: simultaneously diagonalizable matrices $A, B, C$ with diagonal entries $\lambda, \mu, \nu$ respectively. By permuting the diagonal entries of $B$, we generate several different spectra of $A+B$. 

\section{Axioms for Horn sequences}\label{sec:axioms}

In our proof of Theorem \ref{thm:main_thm}, we identify four key properties that honeycombs and Horn triples have in common. By a simple inductive argument, any two objects having these four properties in common must be the same. In this section, we state these four properties, which we think of as an axiomatic characterization for Horn triples. In the rest of the paper, we show that both honeycombs and Horn triples satisfy our axioms. 

Recall a few basic notions from convexity theory.   
A point $a\in A$ of a subset $A\subset\R^n$ is called an \textit{extreme point} of $A$ 
if there does not exist a nonzero vector $b\in\R^n$ such that the line segment $[a-b,a+b]$ belongs to $A$.   
According to the Krein-Milman theorem, every compact
convex set $A\subset \R^n$ equals the convex hull of its extreme points.   
A \textit{convex polytope} $P\subset \R^n$ is a compact convex set with finitely many
extreme points, which are called \textit{vertices} of $P$.  Thus a convex polytope $P$ is the convex hull of its vertices.

Recall that $\mathfrak{C}_n$ is the fundamental Weyl chamber $\{ \lambda \in \R^n \mid \lambda_1 \geq \lambda_2 \dots \geq \lambda_n\}$.
%Let $\N=\{1,2,3,\dots,\}$.
%For a subset $I\subset [n]$, $I^c:= [n]\setminus I$ is the complementary subset.

%Both $\Horn_n$ and $\Honey_n$ are partition functions of dimension $n$. 

\begin{definition}\label{def:Horn_sequence}
Let $\{f_n\}_{n\geq 1}$ be a sequence of functions, where $f_n$ is a function from 
$\mathfrak{C}_n\times \mathfrak{C}_n$ to nonempty subsets 
of $\mathfrak{C}_n$.
The sequence $\{f_n\}$ is a \textit{Horn sequence} if it satisfies the following four axioms:
    \begin{enumerate}
        \item (Base case) 
        For $n=1$,  we have $f_1(\lambda,\mu) = \{\lambda + \mu\}$.
        %For any $\nu\in f_n(\lambda,\mu)$, 
        %we have $\sum \lambda_i + \sum \mu_i = \sum \nu_i$.
        %For all $(\lambda, \mu) \in \mathfrak{C}_1 \times \mathfrak{C}_1$, $f_1(\lambda, \mu) = \{\lambda+\mu\}$
        \item (Direct sum) If $\nu \in f_n(\lambda, \mu)$ and $\nu' \in f_m(\lambda', \mu')$, then $\nu \oplus \nu' \in f_{n+m}(\lambda \oplus \lambda', \mu \oplus \mu')$.
        \item (Convexity) For all $n, \lambda, \mu$, $f_n(\lambda, \mu) \subset \mathfrak{C}_n$ is a convex polytope.
        \item (Splitting)  For $n\geq 2$, if $\nu$ is an extreme point of $f_n(\lambda, \mu)$, then 
        there exist
        $\lambda',\mu',\nu'\in\mathfrak{C}_m$ and $\lambda'',\mu'',\nu''\in \mathfrak{C}_{n-m}$, for some $0<m<n$, such that 
         \begin{enumerate}
         \item $\lambda=\lambda'\oplus\lambda''$, $\mu=\mu'\oplus\mu''$, $\nu=\nu'\oplus\nu''$,
         \item $\nu'\in f_{m}(\lambda',\mu')$ and $\nu''\in f_{n-m}(\lambda'',\mu'')$.
      \end{enumerate}    
%  there exist proper subsets $I, J, K \subset [n]$ such that:
%       \begin{enumerate}
%          \item $|I|=|J|=|K| =m$ for some $0 < m < n$
%            \item $\nu_K \in f_m(\lambda_I, \mu_I)$ and 
%            \item $\nu_{K^c} = f_{n-m}(\lambda_{I^c}, \mu_{J^c})$
%        \end{enumerate}
        \end{enumerate}
\end{definition}

Our goal will be to show that  $\{\Horn_n\}$ and $\{\Honey_n\} = \{\Frame_n\}$ are Horn sequences. Then, Theorem \ref{thm:main_thm} 
follows from Proposition \ref{prop:axioms} below. 

\begin{proposition}\label{prop:axioms}
Any two Horn sequences are equal. 
\end{proposition}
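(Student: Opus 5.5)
Let $\{f_n\}$ and $\{g_n\}$ be two Horn sequences. We will show $f_n(\lambda,\mu)=g_n(\lambda,\mu)$ for all $\lambda,\mu\in\mathfrak{C}_n$ by strong induction on $n$. By symmetry of the roles of $f$ and $g$, it suffices at each stage to show the inclusion $f_n(\lambda,\mu)\subseteq g_n(\lambda,\mu)$. The base case $n=1$ is immediate from axiom (i): both sets equal $\{\lambda+\mu\}$.

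For the inductive step, fix $n\geq 2$ and assume $f_k=g_k$ for all $k<n$. Fix $\lambda,\mu\in\mathfrak{C}_n$.

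By axiom (iii), $f_n(\lambda,\mu)$ is a convex polytope, so it is the convex hull of its finitely many extreme points. Likewise, $g_n(\lambda,\mu)$ is convex. It therefore suffices to show that every extreme point $\nu$ of $f_n(\lambda,\mu)$ lies in $g_n(\lambda,\mu)$. Convexity of $g_n(\lambda,\mu)$ then gives the whole inclusion.

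Let $\nu$ be such an extreme point. By axiom (iv) applied to $f$, there exist $0<m<n$ and decompositions
$$\lambda=\lambda'\oplus\lambda'',\quad \mu=\mu'\oplus\mu'',\quad \nu=\nu'\oplus\nu''$$
with $\nu'\in f_m(\lambda',\mu')$ and $\nu''\in f_{n-m}(\lambda'',\mu'')$. Since $m<n$ and $n-m<n$, the induction hypothesis gives $\nu'\in g_m(\lambda',\mu')$ and $\nu''\in g_{n-m}(\lambda'',\mu'')$. Axiom (ii) applied to $g$ then yields
$$\nu=\nu'\oplus\nu''\in g_n(\lambda'\oplus\lambda'',\mu'\oplus\mu'')=g_n(\lambda,\mu),$$
as required.

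The step needing the most care is the reduction from a convex set to its extreme points. Here one must note that a convex polytope has extreme points and is their convex hull (Krein--Milman, or directly from the paper's definition of polytope). Nonemptiness of $f_n(\lambda,\mu)$ is part of the definition of a Horn sequence, but it is not even needed, since the empty set would trivially satisfy the inclusion. The splitting axiom is used only for $f$ and the direct sum axiom only for $g$, which is exactly what lets the symmetric argument close.
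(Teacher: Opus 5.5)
Your proposal is correct and follows the paper's proof essentially step for step: induction on $n$, reduce to extreme points via convexity, split an extreme point of $f_n(\lambda,\mu)$, apply the induction hypothesis to both pieces, and reassemble with the direct sum axiom for $g$. You also state explicitly two things the paper leaves implicit: the convexity of $g_n(\lambda,\mu)$ is what closes the reduction to extreme points, and the second piece lives in size $n-m$ (the paper's $g_{m-n}$ is a typo).
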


\begin{proof}
    We will show, for all $n, \lambda, \mu$, that $f_n(\lambda, \mu) \subseteq g_n(\lambda, \mu)$; then the statement follows by symmetry. We proceed by inducting on $n$. The base case follows from the base case axiom.
    %: $f_1((\lambda_1),(\mu_1))=g_1((\lambda_1),(\mu_1))=\{\lambda_1+\mu_1\}$. 
    
    Now, by the convexity axiom, it suffices to show that every vertex $\nu$ of $f_n(\lambda, \mu)$ 
    is contained in $g_n(\lambda, \mu)$. 
    %Let $\nu$ be a vertex of $f_n(\lambda, \mu)$. 
    By the vertex splitting axiom, we have $\lambda=\lambda'\oplus\lambda''$,
    $\mu=\mu'\oplus\mu''$, $\nu=\nu'\oplus\nu''$, $\nu'\in f_{m}(\lambda',\mu')$, and $\nu''\in f_{n-m}(\lambda'',\mu'')$.
    %we find sets $I, J, K \subsetneq [n]$ such that $\nu_K \in f_m(\lambda_I, \mu_J)$ and $\nu_{K^c} \in f_{n-m}(\lambda_{I^c}, \mu_{J^c})$. 
    Inducting on $n$, we obtain
    $$
    \nu'\in g_m(\lambda',\mu')\quad\textrm{and}\quad \nu''\in g_{m-n}(\lambda'',\mu'').
    $$
%    \[\nu_K \in g_m(\lambda_I, \mu_J) \hspace{2em} \nu_{K^c} \in g_{n-m}(\lambda_{I^c}, \mu_{J^c})\]
    Then, applying the direct sum axiom, we get 
    $$
    \nu=\nu'\oplus \nu''\in g_n(\lambda'\oplus \lambda'', \mu'\oplus \mu'') = g_n(\lambda,\mu),
    $$
    %\[\nu = \nu_{K} \oplus \nu_{K^c}  \in g_n(\lambda_I \oplus \lambda_{I^c}, \mu_J \oplus \mu_{J^c}) = g_n(\lambda, \mu)\] 
as needed.
\end{proof}

\begin{remark}
    It is worth pointing out that our axioms are easier to prove for honeycombs than for Horn triples. In Section \ref{sec:honeycomb_axioms}, our proof of each axiom is purely combinatorial; the main new insight is the concept of frameworks, which leads to a clean proof of the splitting axiom. Meanwhile, the proofs of convexity and splitting for Horn triples are more involved. In Section \ref{sec:Horn_triples}, we modify an argument of Horn to show the splitting axiom for Horn triples. 
    We leave the convexity axiom as a black box, following from symplectic geometry. 
In this way, honeycombs can be seen as a simplifying model for Horn triples. Honeycombs share these same four defining properties with Horn triples, but in more direct and clear ways. 
\end{remark}

\begin{remark} 
    The individual properties listed in Definition \ref{def:Horn_sequence} are not new. On the side of Horn triples,  the base case 
    obvious, and 
    %the conservation axiom is
    %exactly the trace condition $\mathrm{tr}(A+B)=\mathrm{tr}(A)+\mathrm{tr}(B)$.   
    the direct sum axiom follows from taking direct sums of matrices.
    Convexity was the first part of Horn's original conjecture, and follows from deep geometric arguments, which we do not recreate here. 
    Horn~\cite{horn} himself proved a statement very similar (although not equivalent) to the splitting axiom. 
    We modify his argument with a couple of tricks to show the splitting axiom. 
    
    On the side of honeycombs and frameworks, the base case is also clear. Direct sums come from \textit{overlaying} two honeycombs, an operation studied by Knutson and Tao. Convexity is immediate for honeycombs defined as graph embeddings, and was noticed by Berenstein-Zelevinsky \cite{BZ}. Our exact statement of the splitting axiom for honeycombs seems to be new, but similar statements are given in \cite{honeycombs1} and \cite{honeycombs2}. 
\end{remark}

\section{$\Honey_n$ satisfy the axioms}\label{sec:honeycomb_axioms}

In this section, we verify the axioms for honeycombs.  We already noticed that 
$\Honey_n = \Frame_n$ in Corollary~\ref{cor:honey=frame}.   Thus to check individual axioms, we can use either honeycombs or frameworks, whichever is 
more convenient.

\subsection{Base case, direct sum, and convexity axioms}

The base case is easy: when $n=1$, the only possible honeycomb is a single $Y$ shape. 
%The conservation axiom is equally to check either for honeycombs or for frameworks.

\begin{comment}
\begin{proposition}  For any framework with boundary rays given by a triple $(\lambda,\mu,\nu)$, we have $\sum \lambda_i + \sum \mu_i = \sum \nu_i$.
\end{proposition}

\begin{proof}  By definition, for every node $v$ of the framework, the sum of positions $p_e$ of three edges incident to $v$ is zero (local conservation).  
By adding these quantities for all red nodes and subtract them for all blue nodes, we get
$$
\sum_{u\textrm{ is red}} \left(\,\sum_{e\textrm{ is indicent to } u} p_e\right)  - 
\sum_{v\textrm{ is blue}} \left(\, \sum_{f\textrm{ is indicent to } v} p_f\right)  = 0.
$$
Since positions $p_e$ of all internal edges $e$ cancel each other in the left hand side, we deduce that the sum of position of $3n$ boundary rays is zero,
which is exactly the needed conservation property.
\end{proof}
\end{comment}

The convexity and direct sum also turn out to be straightforward.   
Note that the convexity of $\Honey_n$ is clear from the first definition of honeycombs, whereas the direct sum axiom is clear from the second definition of honeycombs
as zero-tension diagrams.

\begin{proposition}\label{prop:polytope}
    For all $\lambda, \mu, n$, $\Honey_n(\lambda, \mu)$ is a convex polytope. 
\end{proposition}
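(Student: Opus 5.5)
We plan to use the first definition of honeycombs, via Berenstein--Zelevinsky patterns, and realize $\Honey_n(\lambda,\mu)$ as a linear projection of a bounded polyhedron. Fix $\lambda,\mu\in\mathfrak{C}_n$. Parametrize $n$-honeycombs by the array of positions $(p_e)$ of all edges of $H_n$, including boundary rays, so a honeycomb is a point of a finite-dimensional space $\R^{E(H_n)}$. We will check that the conditions of Definition~\ref{def:honeycomb1} become finitely many linear equalities and inequalities in the $p_e$.

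\textbf{Step 1: linear equalities.} At each node the three incident edges lie on lines of the three types $(a,*,*)$, $(*,b,*)$, $(*,*,c)$, and they meet at one point of $\R^3_{\Sigma=0}$. So their positions satisfy $a+b+c=0$, a linear equation. Conversely, positions satisfying these equations at every node determine the node locations.

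\textbf{Step 2: linear inequalities.} The orientation requirements in (i)--(ii) say that each internal edge has nonnegative length in its prescribed compass direction. The length $l_e$ of an internal edge is a linear function of the positions of the edges adjacent to its endpoints (a difference of two coordinates), so $l_e\ge 0$ is a linear inequality. Fixing the Northwest and Northeast boundary positions to $\lambda,\mu$ is also linear. The South boundary positions are then linear functions giving $-\nu$. The set $P(\lambda,\mu)$ of valid honeycombs is therefore a polyhedron, and $\Honey_n(\lambda,\mu)$ is its image under the linear map reading off the South rays. This image is a polyhedron because projections of polyhedra are polyhedra (Fourier--Motzkin). It is convex, and it has finitely many extreme points, since extreme points of the image are images of vertices of $P$.

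\textbf{Step 3: boundedness.} This is the main point needing care. We would use the BZ description: by Definition~\ref{def:BZ_patterns}, the boundary conditions give $l_e+l_{e'}=\lambda_i-\lambda_{i+1}$ along the boundary. Combined with the hexagon conditions and $l_e\ge0$, a standard argument propagates bounds inward, so every $l_e$ is bounded by a constant depending on $\lambda,\mu$.

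A simpler route for $\nu$ alone avoids this. The trace identity $\sum\nu_i=\sum\lambda_i+\sum\mu_i$ holds for any honeycomb, since summing the node equations with signs by color cancels the internal edges. Together with $\nu\in\mathfrak{C}_n$, it then suffices to bound $\nu_1$ above; the lower bound on $\nu_n$ follows from the trace identity and monotonicity. We would try to prove $\nu_1\le\lambda_1+\mu_1$ by following the South ray of largest position upward through the honeycomb, keeping the position monotone at each node until reaching a Northwest or Northeast ray. If that path argument is messy, we fall back on bounding all $l_e$ via the hexagon relations.

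Given nonemptiness and boundedness, the image is a compact polyhedron, hence a convex polytope. Nonemptiness comes from a permutation honeycomb, which exists by construction.
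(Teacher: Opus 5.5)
Your proposal is correct and is essentially the paper's proof: the paper likewise parametrizes honeycombs by Berenstein--Zelevinsky patterns (linearly equivalent to your position coordinates), notes that the conditions are linear equations together with $l_e\ge 0$, asserts boundedness via $l_e\le\max(\lambda_1-\lambda_n,\mu_1-\mu_n)$ (your fallback), and obtains $\Honey_n(\lambda,\mu)$ as a linear projection of the resulting polytope. Your shortcut of bounding only $\nu$ also works, but the path step needs two paths rather than one: at the node where the South ray at position $-\nu_1$ ends (this is the smallest South position, not the largest), $\nu_1=a+b$ with $a,b$ the positions of that node's Northwest and Northeast edges, and $a\le\lambda_1$ comes from the zigzag of alternating Northwest/North edges (along which $x$ never decreases, ending at a Northwest ray), while $b\le\mu_1$ comes from the zigzag of alternating Northeast/Southeast edges (along which $y$ never decreases, ending at a Northeast ray).
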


\begin{proof}
    The most convenient way to establish convexity is through Definition~\ref{def:honeycomb1} of honeycombs as embeddings of the graph $H_n$ and corresponding Berenstein-Zelevinsky patterns (Definition~\ref{def:BZ_patterns}).
    For fixed $\lambda$ and $\mu$, let $\mathrm{BZ}_n(\lambda,\mu)$ be the set of 
    Berenstein-Zelevinsky patterns $(l_e)$ with given $\lambda$ and $\mu$ and arbitrary $\nu$.
%    arrays $\{x_e\}$ of parameters $x_e\in\R$
%       If $\phi: H_n \to \R^3_{\sum = 0}$ is such an embedding, then for an edge $e=\{u,v\}$ of $H_n$, let $x_e$ be the length of the line segment $[\phi(u),\phi(v)]$
%divided by $\sqrt{2}$. The collection of parameters 
%    $\{x_e\}_{e \in E(H_n)}$, together with positions of the Northwest and Northeast boundary rays, completely determines the embedding $\phi$. 
%    Let $\mathrm{BZ}_n(\lambda,\mu)$ be the set of arrays $\{x_e\}$ of parameters $x_e\in\R$ that correspond to honeycombs with fixed positions of 
%    Northeast and Northwest boundary rays given by parts of $\lambda$ and $\mu$, respectively.
    (The set $\mathrm{BZ}_n(\lambda,\mu)\subset \R^{3n(n-1)/2}$ should not be confused with $\Honey_n(\lambda,\mu)\subset\R^n$.)
%    The set $\mathrm{BZ}_n(\lambda,\mu)$ is described by the following conditions:
%    \begin{enumerate}
%        \item For all edges $e$, $x_e \geq 0$. 
%        \item (Hexagon condition) For all $6$-cycles of $H_n$, with edges labeled clockwise as $e_1, e_2 \dots ,e_6$, we have  %$x_{e_1}+x_{e_2} = x_{e_4}+x_{e_5}$, $x_{e_2} + x_{e_3} = x_{e_5}+x_{e_6}$, and $x_{e_3}+x_{e_4} = x_{e_6}+x_{e_1}$.
%        \item (Boundary condition) For a pair of adjacent Northwest boundary rays with positions $\lambda_i$ and $\lambda_{i+1}$ and the %two edges 
%        $e$, $e'$ of $H_n$ connecting these boundary rays, we have  $\lambda_i - \lambda_{i+1} = x_e + x_{e'}$.   Similarly, we have %analogous 
%        equalities for the Northeast boundary rays, whose positions are given by parts of $\mu$. 
%    \end{enumerate}
%    Remark, that these conditions are exactly the definition of Berenstein-Zelevinsky patterns \cite{BZ}.
    Notice that Berenstein-Zelevinsky patterns are defined by 
    linear equations in the parameters $l_e$ and the inequalities $l_e\geq 0$.
    %that these conditions are linear in the parameters $l_e$.  
    It is easy to see that these conditions imply that 
    $l_e\leq \max(\lambda_1-\lambda_n, \mu_1-\mu_n)$, for any edge $e$; thus the set $\mathrm{BZ}(\lambda,\mu)$ is bounded.
    This shows that $\mathrm{BZ}_n(\lambda,\mu)$ is a convex polytope, called the \textit{Berenstein-Zelevinsky polytope.}
    The set $\Honey_n(\lambda,\mu)$ is the projection of the Berenstein-Zelevinsky polytope $\mathrm{BZ}_n(\lambda,\mu)$ to the positions $-\nu_i$ of the boundary rays in the South direction.
    Thus $\Honey_n(\lambda,\mu)$ is also a convex polytope, because a linear projection of a convex polytope is a convex polytope.
\end{proof}

\begin{proposition}
    $\{\Honey_n\}$ satisfies the direct sum axiom. 
\end{proposition}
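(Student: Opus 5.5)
The plan is to prove the direct sum axiom by overlaying honeycombs, using either the zero-tension diagram description (Definition \ref{def:honeycomb2}) or, more cleanly, frameworks via Corollary~\ref{cor:honey=frame}.

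Suppose $\nu\in\Honey_n(\lambda,\mu)$ and $\nu'\in\Honey_m(\lambda',\mu')$. By Corollary~\ref{cor:honey=frame}, choose a framework $F$ with $3n$ boundary rays realizing $(\lambda,\mu,-\nu)$. Likewise, choose a framework $F'$ with $3m$ boundary rays realizing $(\lambda',\mu',-\nu')$.

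Form the disjoint union $F\sqcup F'$. Its underlying graph is the disjoint union of two finite simple 3-valent bipartite graphs, which is again finite, simple, 3-valent and bipartite. Each node keeps its color. The embedding into $\R^3_{\sum=0}$ is the union of the two embeddings.

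Next I check the conditions of Definition~\ref{def:frameworks}. Edges still lie along compass directions and have strictly positive lengths. Red and blue nodes still have the correct incident directions, and all boundary rays point Northwest, Northeast or South. Geometric overlaps between $F$ and $F'$ cause no problem, because frameworks explicitly allow distinct nodes and edges to coincide or overlap in the plane, and no planarity is required. So $F\sqcup F'$ is a framework with $3(n+m)$ boundary rays.

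Its Northwest rays have positions equal to the multiset union of the parts of $\lambda$ and $\lambda'$. Sorted in nonincreasing order, this is exactly $\lambda\oplus\lambda'$. Similarly, the Northeast rays give $\mu\oplus\mu'$. The South rays have positions equal to the multiset union of the parts of $-\nu$ and $-\nu'$, which is $-(\nu\oplus\nu')$ once indexed appropriately. Hence $\nu\oplus\nu'\in\Frame_{n+m}(\lambda\oplus\lambda',\mu\oplus\mu')=\Honey_{n+m}(\lambda\oplus\lambda',\mu\oplus\mu')$.

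Alternatively, one can add the measures of the two zero-tension diagrams. Zero tension is preserved because the balancing condition at each point is linear in the multiplicities. There are still only finitely many vertices, the boundary directions are unchanged, and the boundary positions combine as multisets. Proposition~\ref{prop:equiv_defs} then converts the resulting diagram back to an embedding of $H_{n+m}$.

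The main point to watch is that the sum is still a legitimate object of the right size, namely an embedding of $H_{n+m}$ rather than some other graph. This is why frameworks are the cleaner route, since their underlying graph is arbitrary. The only remaining bookkeeping is that the boundary-ray count is $3(n+m)$, split evenly among the three directions, so the object defines an element of $\Frame_{n+m}$.
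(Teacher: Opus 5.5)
Your proof is correct and is essentially the paper's argument. The paper overlays the two honeycombs by summing their measures as zero-tension diagrams, which is your alternative route, and your disjoint union $F\sqcup F'$ of frameworks induces exactly that summed measure; the framework version just moves the work of recognizing the result as an $(n+m)$-honeycomb onto Corollary~\ref{cor:honey=frame}, which the paper has already established.
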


\begin{proof}
    Essentially, we can draw one honeycomb on top of the other. More formally, given two honeycombs $H, H'$, their \textit{overlay} $H \oplus H'$, see \cite{honeycombs1}, is given by summing the measures associated to $H, H'$. Here we are using Definition \ref{def:honeycomb2} of honeycombs
    as zero-tension diagrams. If $H$ has boundary rays $(\lambda, \mu, \nu)$ and $H'$ has boundary rays $(\lambda', \mu', \nu')$, then $H \oplus H'$ has boundary rays $(\lambda \oplus \lambda', \mu \oplus \mu', \nu \oplus \nu')$. Figure \ref{fig:overlay} shows an example of this operation. 
\end{proof}

\begin{figure}
    \begin{center}
    \includegraphics[scale=0.5]{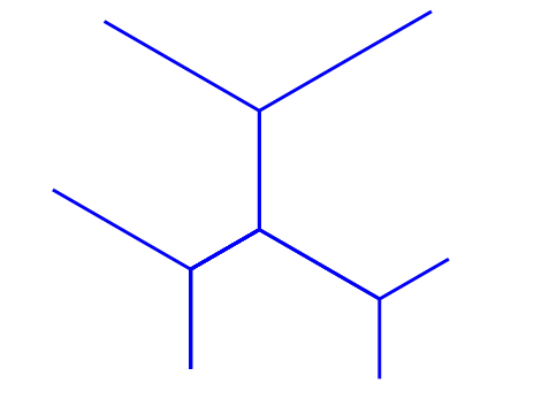}
        \includegraphics[scale=0.5]{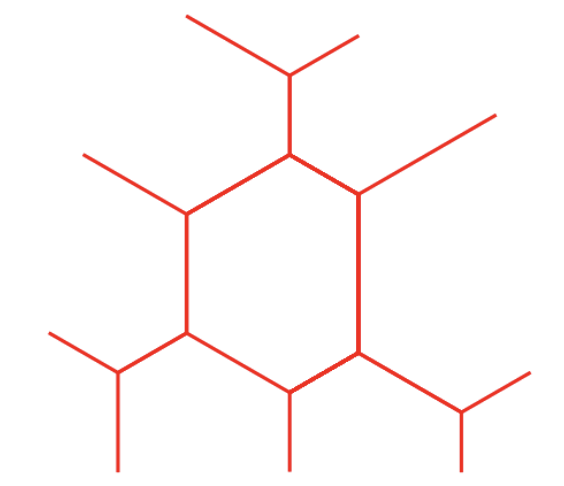}
        \includegraphics[scale=0.5]{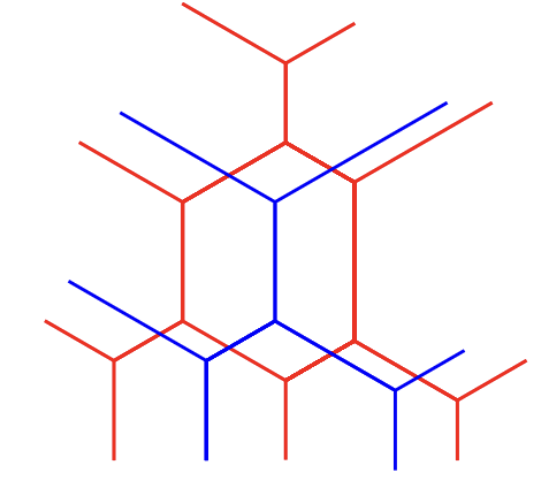}
    \end{center}
    
    \caption{The 5-honeycomb shown on the right is the \textit{overlay} of the blue 2-honeycomb and red 3-honeycomb.}
    \label{fig:overlay}
    
\end{figure}

\subsection{The vertex splitting axiom for $\Honey_n$}

What remains is to prove the vertex splitting axioms for $\Honey_n$, which is a bit more subtle. 

\begin{theorem}\label{thm:prop:breathing_paths}
For $n\geq 2$,
    let $\nu$ be a vertex of the polytope $\Honey_n(\lambda, \mu)$, and let $H$ be any honeycomb with boundary rays $(\lambda, \mu, \nu)$. Then any framework representing $H$ is disconnected as a graph. 
\end{theorem}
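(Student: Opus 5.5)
We argue by contradiction. Suppose $\nu$ is a vertex of $\Honey_n(\lambda,\mu)$, $H$ is a honeycomb with boundary $(\lambda,\mu,\nu)$, and some framework $F$ representing $H$ (Proposition~\ref{prop:frameworks'}) has a connected underlying graph $G$. We will deform $F$ into two valid frameworks with the same $\lambda,\mu$ and South boundary positions $\nu\pm\epsilon b$ for some nonzero $b$. This contradicts extremality, since $\Honey_n=\Frame_n$ by Corollary~\ref{cor:honey=frame}. The deformation is a `breathing of a path', the path analogue of the cycle breathing in the proof of Theorem~\ref{thm:saturation}.

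\textbf{Choosing the path.} Since $n\geq 2$, there are at least two South boundary rays, say $r$ and $r'$, attached to nodes $u$ and $u'$. Because $G$ is connected, there is a path $P$ in $G$ from $u$ to $u'$. Append $r$ and $r'$ to the ends of $P$, orient the resulting path from $r$ to $r'$, and change positions as in the cycle breathing. Every edge of $P$ whose orientation is South, Northwest or Northeast gets its position raised by $\epsilon$, every edge oriented North, Southwest or Southeast gets its position lowered by $\epsilon$, and all other edges keep their positions.

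\textbf{Checking validity.} First, local balance at each node. At an internal node of the path, exactly two incident edges change. Reading directions away from the node, one is traversed into the node and one out of it, so the two changes cancel in the node's zero-sum condition on positions. This is the same local computation as in the cycle case. Second, the boundary rays. Positions of all Northwest and Northeast rays are unchanged, so $\lambda$ and $\mu$ are preserved. The ray $r$ is traversed inward, that is, northward, and $r'$ outward, that is, southward, so their positions move by opposite amounts $\mp\epsilon$. Third, edge lengths. Because all edges of $F$ have strictly positive length, every edge length changes linearly in $\epsilon$, and every node stays adjacent to edges in its required three directions, the deformation is a valid framework for all $|\epsilon|$ small, of either sign.

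\textbf{Concluding.} The new South positions are $-\nu$ with two coordinates shifted by $+\epsilon$ and $-\epsilon$. After re-sorting, this gives $\nu\pm\epsilon b$ with $b\neq 0$ (for example $b=e_i-e_j$, up to sorting). Sorting is harmless: if $\nu_i=\nu_j$, one sign produces a strictly different sorted tuple, and the other sign produces its mirror image. So both $\nu+\epsilon b$ and $\nu-\epsilon b$ lie in $\Frame_n(\lambda,\mu)=\Honey_n(\lambda,\mu)$, contradicting that $\nu$ is extreme.

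The main obstacle is this last step when $\nu_i=\nu_j$. Then $\nu\pm\epsilon b$ must be interpreted as the sorted tuples, and we need both of them to be genuinely different from $\nu$ and symmetric about $\nu$ as points of $\mathfrak{C}_n$. This amounts to a careful choice of $r,r'$ within a block of equal South positions, or an argument directly with the multiset of South positions, rather than any difficulty with the breathing itself.
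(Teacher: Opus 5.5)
Your path breathing is the same as the paper's single-path breathing and is fine. The gap is in the concluding step, and the fix you suggest points the wrong way.

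\textbf{Rays with equal positions.} If $r,r'$ have equal positions, $\nu_i=\nu_j=c$, with the block of $c$'s at indices $k,\dots,k+m-1$, then both signs of $\epsilon$ give the \emph{same} multiset of South positions. So both give the same sorted tuple $\nu+\epsilon(e_k-e_{k+m-1})$. Its ``mirror image'' $\nu-\epsilon(e_k-e_{k+m-1})$ is not in $\mathfrak{C}_n$, so you get no segment centred at $\nu$. Choosing $r,r'$ inside a block of equal positions therefore never works.

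\textbf{Distinct but repeated values.} For $\nu_i\neq\nu_j$ a single path still fails as soon as either value is repeated. Take $\nu=(c,c,d)$ with $c>d$: the two signs give $\nu+\epsilon(e_1-e_3)$ and $\nu-\epsilon(e_2-e_3)$. These do not have $\nu$ as their midpoint, so extremality is not contradicted.

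\textbf{The missing idea: preserve multiplicities.} Take two \emph{distinct} values $\nu_i\neq\nu_j$ with multiplicities $m,m'$ and South rays $r_1,\dots,r_m$ and $r'_1,\dots,r'_{m'}$. Superpose the breathings of $mm'$ paths $P_{a,b}$, one from each $r_a$ to each $r'_b$; the changes are linear, so the result is still a valid framework for small $\epsilon$. Every $r_a$ moves by $m'\epsilon$ and every $r'_b$ by $-m\epsilon$, so the ordering and multiplicities of the South rays are kept. This gives $[\nu-b,\nu+b]\subset\Honey_n(\lambda,\mu)$ with $b=\epsilon\sum_{s,t}(e_s-e_t)\neq 0$, where $s$ runs over the $\nu_i$-block and $t$ over the $\nu_j$-block. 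This is what the paper does.

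\textbf{The constant case.} Your argument cannot handle $\nu=c(1,\dots,1)$ at all, and this case must be treated separately. If $\nu\pm b\in\mathfrak{C}_n$ with $\sum b_i=0$, then $b$ is both nonincreasing and nondecreasing, hence $b=0$. So $c(1,\dots,1)$ is an extreme point of every subset of $\mathfrak{C}_n\cap\{\sum x_i=nc\}$ containing it, and no perturbation argument can give a contradiction. Here disconnectedness has to be shown directly. The paper notes that the honeycomb is then forced to be $n$ overlapping Y's whose South rays share one line. Such a diagram has no Southwest or Southeast segments, so a framework representing it has no blue nodes and hence no internal edges. It is therefore $n$ disjoint Y's, which is disconnected for $n\geq 2$.
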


This proposition implies the splitting axiom, because the triple $(\lambda,\mu,\nu)$ corresponding to boundary rays of a disconnected framework splits
into direct sum of triples for connected components of the framework. 

Recall that we defined vertices of a polytope $P$ as its extreme points.  By the definition, a point $a\in P$ is \textit{not} a vertex if and only if there exists a nonzero vector $b$ such that $[a-b, a+b]\subset P$.
%Our strategy for proving Proposition \ref{prop:breathing_paths} is to use 
%The following lemma is basically a reformulation of the definition of vertices of a polytope as its extreme points.
%\begin{lemma}\label{lemma:vertices}
%    Let $P \subset \R^n$ be a convex polytope, and let $x \in P$.   There exists a non-zero vector $v \in \R^n$ and $\epsilon>0$ such that $x+\epsilon v \in P$ and $x - %\epsilon v \in P$ if and only if $x$ is not a vertex of $P$. 
%\end{lemma}
To prove Theorem~\ref{thm:prop:breathing_paths}, for a connected framework $F$, 
we want to engineer some way to perturb $F$ that \textit{leaves constant} positions of the Northwest and Northeast boundary rays, but shifts positions of 
some of the South boundary rays back and forth by some small $\epsilon\ne 0$. 
%More specifically, we will choose two parts $\nu_i, \nu_j$ of $\nu$, and find a path $P$ in the framework connecting boundary rays with constant coordinates $\nu_i$ and $\nu_j$. Then we use $P$ to shift $\nu$ by $\pm \epsilon(e_i - e_j)$, where $e_i$ is the $i$th standard basis vector of $\R^n$, by `breathing' $P$ back and forth.  
The strategy is slightly more subtle when $\nu$ has repeated parts: when we perturb some of the South boundary rays, we will want to ensure that all multiplicities of $\nu$ are preserved. Interestingly, perhaps as expected, this same subtlety arises when we study vertex splitting for Horn triples in Section \ref{sec:Horn_triples}. 

\begin{proof}[Proof of Theorem~\ref{thm:prop:breathing_paths}]
The case then all parts of $\nu$ are are equal to each other is easy.  In this case there is (at most) one honeycomb with boundary rays given by 
triple $(\lambda,\mu,\nu)$. Namely, the unique honeycomb in this highly degenerate case consists of several overlapping $Y$-shapes whose South rays have 
coinsiding positions.   (It is actually a permutation honeycomb.)   Its framework is a union of $n$ $Y$-shaped connected components, which is a disconnected graph
for $n\geq 2$.

Now we can assume that $\nu$ has at least two distinct entries $\nu_i\ne \nu_j$.   
Suppose that a honeycomb $H$ with boundary rays given by the triple $(\lambda,\mu,\nu)$ is 
represented by a framework $F$ with connected graph $G$.

First, consider the case when both entries $\nu_i$ and $\nu_j$ of $\nu$ have multiplicity 1.   Since the framework is connected, we can find a path $P$ 
in the graph $G$ that starts at the South boundary ray with position $-\nu_i$ and ends at the South boundary ray with position $-\nu_j$.
    Because each node of $P$ is trivalent and all edges have strictly positive lengths, we can find a small $\epsilon > 0$ such that $P$ can `breathe' back and forth by $\epsilon$, altering $\nu$ by $\pm \epsilon(e_i - e_j)$ but fixing $\lambda$ and $\mu$,
    see Figure~\ref{fig:breathing_of_path}.
    (Here $e_i$ is the $i$th standard basis vector of $\R^n$.) 
Let us make this more precise. Pick a way to direct the path $P$.
If an edge of $P$ points South, Northwest, or Northeast, increase its position (i.e., the constant coordinate of the corresponding line segment 
in $\R^3_{\sum=0}$)  by $\epsilon$. If an edge of $P$ points North, Southeast, or Southwest, decrease its position by $\epsilon$. Suppose that $\epsilon$ is sufficiently small. (It is enough to pick $\epsilon$ smaller than the minimal edge length in the framework
and $\min(\nu_{i-1}-\nu_i, \nu_{i} - \nu_{i+1}, \nu_{j-1}-\nu_j, \nu_{j} - \nu_{j+1}$.)
Then this deformation produces a valid framework with the same positions of Northwest and Northeast boundary rays given by $\lambda$ and $\mu$
and the same ordering of the South boundary rays as in the original framework.   The vector of (minus) positions of the South boundary rays in the deformed framework is
$\nu + \epsilon (e_i - e_j)$.   Similarly, if we shift positions of edges in $P$ in the opposite direction, we obtained a framework with boundary rays 
corresponding the the triple $(\lambda,\mu, \nu - \epsilon (e_i - e_j)$.
This shows that the line segment $[\nu - \epsilon(e_i - e_j), \nu + \epsilon (e_i - e_j)]$ belongs to the polytope $\Honey_n(\lambda, \mu)$.
Thus $\nu$ is not a vertex of $\Honey_n(\lambda, \mu)$.

    \begin{figure}
        \begin{center}
            \includegraphics[scale=1]{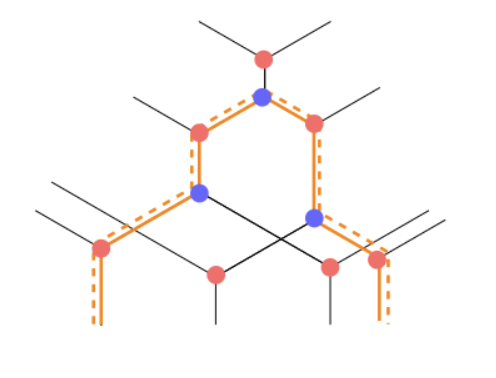}
        \end{center}
        \caption{A framework for $n=4$ with a highlighted path connecting $-\nu_1$ to $-\nu_4$ that can breathe back and forth. }
        \label{fig:breathing_of_path}
    \end{figure}

Let us now extend the agrument to the general case when $\nu$ has two distinct entries $\nu_i\ne \nu_j$ with multiplicities $m$ and $m'$, respectively.
This means that, in the general case, $\nu$ might lie on the boundary of the fundamental Weyl chamber $\mathfrak{C}_n$.  
Now there is a complication in our strategy: namely, as we `breathe path' by $\epsilon$ and deform $\nu_i$ and $\nu_j$ as above,
then at least one of the points $\nu - \epsilon(e_i - e_j)$ or $\nu + \epsilon(e_i - e_j)$ will lie outside of the chamber $\mathfrak{C}_n$ and it will not 
belong to the polytope $\Honey_n(\lambda,\mu)$.   The solution is that we need to deform not just two entries $\nu_i$ and $\nu_j$ but 
\textit{all} entries of $\nu$ that are equal to $\nu_i$ or $\nu_j$ while preserving their multiplicities.

Now the graph $G$ of the framework $F$ has $m$ \textit{different} boundary rays that are embedded into $\R^3_{\sum=0}$ as South rays with the same positions $-\nu_i$;
and $m'$ different rays with positions $-\nu_j$.  Let us call these rays $r_1,\dots,r_{m}$ and $r'_1,\dots,r'_{m'}$, respectively.
Let us pick $m\cdot m'$ paths $P_{a,b}$ in the graph $G$, where $P_{a,b}$ is an arbitrary path starting at $r_a$ and ending at $r_b$,
for $a=1,\dots,m$, $b=1,\dots,m'$.
These paths $P_{a,b}$ are allowed to have edges and nodes in common. (Indeed they must, as there are $m'$ paths through each $r_a$). 

Let us deform the framework $F$ by `breathing' each of the paths $P_{a,b}$ by sufficiently small $\epsilon$ in exactly the same way as we did in the
case of multiplicities 1.   (We can do this breathing of paths $P_{a,b}$ one by one.   The result does not depend on ordering of the paths.)
The resulting framework has the same positions of the Northwest and Northeast boundary rays given by $\lambda$ and $\mu$,
while each South boundary ray with original position $-\nu_i$ is shifted by $m'\epsilon$
and each South boundary ray with original position $-\nu_j$ is shifted by $-m\,\epsilon$.   For sufficiently small $\epsilon$, we get a valid framework with the 
same ordering and the same multiplicities of the South boundary rays.   (To be exact, it is enough to pick $\epsilon < {1\over m\cdot m'} \min(l,d)$,
where $l$ is the minimal edge length in the framework and $d$ is the minimal difference between distinct entries of $\nu.)$  Similarly, we can deform 
the collection of paths $P_{a,b}$ in the opposite direction.
This shows that the polytope $\Honey_n(\lambda,\mu)$ contains the line segment $[\nu - b, \nu + b]$, where
$b$ is the vector $\epsilon\sum_{s\in [i,i+m-1], t\in[j,j+m'-1]} (e_s - e_t)$.
Thus $\nu$ is not a vertex of $\Honey_n(\lambda,\mu)$.
\end{proof}

Using a similar argument, we can also prove the following statement.

\begin{proposition}
    Let $\nu$ be a vertex of the polytope $\Honey_n(\lambda, \mu)$ \textit{strictly} in the interior of $\mathfrak{C}_n$. Then any honeycomb with boundary rays
    corresponding to the triple $(\lambda, \mu, \nu)$ is a \textit{permutation} honeycomb.   
    More generally, if $\nu$ belongs to a $d$-dimensional face of the polytope $\Horn_n(\lambda,\mu)$ and $\nu$ is strictly in the interior of $\mathfrak{C}_n$, then any framework representing the triple $(\lambda,\mu,\nu)$ has at least
    $n-d$ connected components as a graph.
\end{proposition}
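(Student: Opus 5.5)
The plan is to repeat the path-breathing argument from the proof of Theorem~\ref{thm:prop:breathing_paths}, but now keep track of \emph{how many independent directions} breathing produces. I read the second sentence as referring to $\Honey_n(\lambda,\mu)$; after Theorem~\ref{thm:main_reformulation} this is the same polytope as $\Horn_n(\lambda,\mu)$.

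\textbf{Setup.} Fix a framework $F$ representing a honeycomb with boundary triple $(\lambda,\mu,\nu)$, where $\nu$ is strictly inside $\mathfrak{C}_n$. Then all entries of $\nu$ are distinct, so the multiplicity complications from the earlier proof disappear.

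\textbf{Step 1: each component carries equal numbers of rays of each type.} I will count edge types inside one connected component $C$ of the graph. Every red node has one Northwest, one Northeast and one South edge. Every blue node has one Southeast, one Southwest and one North edge. An internal edge that points Northwest from its red end points Southeast from its blue end. Boundary rays only go Northwest, Northeast or South, so they hang only off red nodes. Hence the number of Northwest boundary rays in $C$ is $\#\mathrm{red}(C)-\#\mathrm{blue}(C)$, and the same count holds for Northeast and for South rays. So each component has $m_C$ rays of each of the three types, with $\sum_C m_C=n$. Components with $m_C=0$ carry no boundary rays and can be discarded, since this only lowers the component count. Let $k$ be the number of components with $m_C\ge 1$.

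\textbf{Step 2: breathing gives $n-k$ independent directions.} Take two South rays $r,r'$ in the same component, with positions $-\nu_i$ and $-\nu_j$. Breathing a path between them, exactly as in the multiplicity-one case of Theorem~\ref{thm:prop:breathing_paths}, keeps $\lambda,\mu$ fixed and moves $\nu$ by $\pm\epsilon(e_i-e_j)$.

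For a fixed underlying graph, the edge positions of a framework enter linearly. The constraints are positive edge lengths and the ordering of the South rays, and both are open conditions at $F$. So any small linear combination of such path breathings is still a valid framework.

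It follows that $\Honey_n(\lambda,\mu)$ contains a neighborhood of $\nu$ inside the affine space $\nu+W$. Here $W$ is spanned by all $e_i-e_j$ with $i,j$ South rays in a common component. The dimension of $W$ is $\sum_C(m_C-1)=n-k$.

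\textbf{Step 3: convert to a face-dimension bound.} Let $F_0$ be the minimal face of the polytope containing $\nu$, so that $\nu$ lies in its relative interior. A polytope containing an open piece of $\nu+W$ around $\nu$ forces $\nu+W$ into the affine hull of $F_0$. Hence $\dim F_0\ge n-k$. Any $d$-dimensional face containing $\nu$ contains $F_0$, so $d\ge n-k$, that is, $k\ge n-d$.

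\textbf{Step 4: the vertex case.} If $d=0$, then $k=n$, so every component has exactly one Northwest, one Northeast and one South ray. Each component is then itself a framework with $n=1$, and so represents a $1$-honeycomb by Proposition~\ref{prop:frameworks'}. By the base case, the only $1$-honeycomb is a $Y$-shape. The honeycomb represented by $F$ is the overlay of these $n$ $Y$'s, which is precisely a permutation honeycomb. This argument applies to any framework $F$ representing any honeycomb with these boundary rays, and every honeycomb has such a framework, so the first claim follows.

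\textbf{Main obstacle.} I expect the hardest step to be justifying that simultaneous breathings of several overlapping paths, with signed small $\epsilon$'s, stay within valid frameworks. I plan to handle this by viewing frameworks with a fixed graph as an open polyhedral cone in position coordinates, on which each breathing acts as a fixed linear direction. The other delicate point is the edge-count of Step 1, which is needed to guarantee that every component has as many South rays as Northwest and Northeast rays.
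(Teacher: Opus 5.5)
Your argument takes the same route as the paper: breathe paths between South rays in a common component, which is legitimate because $\nu$ has distinct parts. It also supplies details the paper's sketch skips. First, the red/blue edge count shows that each component has equally many Northwest, Northeast and South rays; this is what justifies the paper's claim that a non-$Y$ component has two South rays. Second, viewing frameworks with a fixed graph as an open cone in position space lets you breathe several paths simultaneously, which gives the bound $d\ge n-k$ that the paper only says is proved ``along the same lines.'' Steps 1--3 are correct as written.

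There is one small hole, in Step 4. In Step 1 you discard components with $m_C=0$. That is harmless for counting components, but in Step 4 you then assert that \emph{every} component has exactly one ray of each type, and that the honeycomb represented by $F$ is the overlay of the $n$ $Y$'s. A component with no boundary rays would add extra segments to the measure of $F$, and the represented honeycomb would then not be a permutation honeycomb. So you need to show such components cannot exist.

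This is easy to fix. Suppose a component has no boundary rays. Choose a linear functional $f$ on $\R^3_{\sum=0}$ that is nonzero on all six compass directions, and a node $v$ of the component maximizing $f$. The three edge directions at $v$ (Northwest, Northeast, South, or Southeast, Southwest, North) sum to the zero vector, so $f$ is positive on at least one of them. That edge is internal and has strictly positive length, so its other endpoint has a larger $f$-value, a contradiction. Hence $m_C\ge 1$ for every component, $k$ is the total number of components, and your Step 4 goes through as stated.
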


This proposition implies that all vertices $\nu$ of $\Honey(\lambda,\mu)$ in the interior of $\mathfrak{C}_n$ have the form
$\nu = u(\lambda) + w(\mu)$, for some permutations $u,w\in S_n$.

\begin{proof}
Suppose that $\nu$ is a vertex of $\Honey_n(\lambda,\mu)$ and all parts of $\nu$ are distinct.   Pick any framework $F$ that represents the 
triple $(\lambda,\mu,\nu)$.   We claim that any connected component of the graph of the framework $F$ must be Y-shaped, i.e., it must correspond to a
$1$-honeycomb.
By contradiction, assume that there is a connected component that is not Y-shaped.   Then it has at least two South boundary rays with positions 
$-\nu_i \ne -\nu_j$.   By breathing a path connecting these rays as the did in proof of Theorem~\ref{thm:prop:breathing_paths}, we deduce
that $\nu$ is not a vertex of $\Honey(\lambda,\mu)$.

The claim about $d$-dimensional faces is proved along the same lines.
\end{proof}
    
\section{$\Horn_n$ satisfies the axioms}\label{sec:Horn_triples}

In this section, we verify the axioms for spectra of sums of Hermitian matrices.

\subsection{Base case, direct sum, and convexity axioms}

%The conservation axiom is exactly the trace condition for sums of Hermitian matrices $\mathrm{tr}(A+B)=\mathrm{tr}(A)+\mathrm{tr}(B)$.
The base case axiom is obvious.
The direct sum axiom is almost as easy. Suppose $(\lambda, \mu, \nu)$ and $(\lambda', \mu', \nu')$ are Horn triples coming from spectra of matrices $A$, $B$, $C=A+B$ and $A'$, $B'$, $C'=A'+B'$, respectively. Then the triple 
$(\lambda \oplus \lambda', \mu\oplus \mu', \nu \oplus \nu')$ is given by the spectra of block-diagonal matrices
$$\begin{bmatrix}
    A & 0 \\ 0 & A'
\end{bmatrix}
, \ 
\begin{bmatrix}
    B & 0 \\ 0 & B'
\end{bmatrix}
, \ 
\begin{bmatrix}
    C & 0 \\ 0 & C'
\end{bmatrix}.
$$

\begin{comment}
For sums of Hermitian matrices the convexity axiom is far from obvious. The fact that $\Horn_n(\lambda, \mu)$ is always a convex polytope was conjectured by Horn
\cite{horn}, and proved using machinery from symplectic geometry: Kirwan's Theorem \cite{kirwan} and the Atiyah-Guillemin-Sternberg Theorem (see \cite{atiyah}), before the full extent of Horn's Conjecture was proved. We direct the interested reader to Knutson's survey \cite{symplectic} for an introduction to the symplectic geometry underlying convexity. In some sense, the fact that we are not able to give a simple proof of convexity seems to be a feature and not a bug, in that it demonstrates how the defining properties of Horn sequences are more immediate and accessible from the honeycomb perspective. 
\end{comment}

%\subsection{Proof outline of convexity and vertex splitting axioms for Horn triples}

%What remains is to prove the vertex splitting axiom. 
For sums of Hermitian matrices, the convexity axiom is far from obvious. The fact that $\Horn_n(\lambda, \mu)$ is always a convex polytope was conjectured by Horn
\cite{horn}, and proved using machinery from symplectic geometry: Kirwan's Theorem \cite{kirwan} and the Atiyah-Guillemin-Sternberg Theorem (see \cite{atiyah}), before the full extent of Horn's Conjecture was proved. We direct the interested reader to Knutson's survey \cite{symplectic} for an introduction to the symplectic geometry underlying convexity.   In some sense, the fact that convexity is more involved for Horn triples than for honeycombs
seems to be a feature and not a bug, 
in that it demonstrates how the defining properties of Horn sequences are more immediate and accessible from the honeycomb perspective. 

What remains is to prove the splitting axiom. 

\subsection{Proof outline of splitting axiom for Horn triples}
\label{ssec:proof_outline_splitting}

We will provide  an `elementary' proof for the splitting axiom.
Note that our proof does not use the convexity axiom.
Our approach is based on an argument of Horn \cite{horn}, with several additions and modifications. Namely, Horn's original proof applied to real symmetric matrices, not Hermitian matrices, and required the parts of $\mu$ and $\nu$ to be distinct. We extend Horn's argument from the real case to the complex case with a simple trick, and further
%use the fact that the set of Horn triples is a closed set 
extend it to partitions with multiplicities.    

%This proves the needed splitting property for $\Horn_n(\lambda,\mu)$.  

%Then we argue that a similar method also implies the convexity of $\Horn_n(\lambda,\mu)$.

Before giving full proofs, we summarize the main ideas. 
Suppose that $\nu$ is an extreme point or, more generally, 
a boundary point of $\Horn_n(\lambda, \mu)$. We will first assume that both $\mu$ and $\nu$ 
have distinct parts, and then deduce the general case.
Similarly to the proof of Theorem~\ref{thm:prop:breathing_paths} for honeycombs, we imagine perturbing $\nu$ back and forth while keeping it within $\Horn_n(\lambda, \mu)$. 
Then we can analyze the tangent space of infinitesimal directions in which $\nu$ can shift.
%We will see that this $V_{\nu}$ is a real vector space (Definition \ref{def:V_{nu}}). 
If $\nu$ is on the boundary of $\Horn_n(\lambda,\mu)$, the tangent space should have dimension \textit{strictly} less than $n-1$.
%(This dimension bound is the only part of our argument that relies on convexity of $\Horn_n(\lambda,\mu)$.)

%(Proposition \ref{prop:dim}). 
%Furthermore, when $\nu$ is assumed to live in lower-dimensional faces of $\Horn_n(\lambda, \mu)$, $V_{\nu}$ should become even lower dimensional. 

How can we perturb $\nu$? Notice that every point of $\Horn_n(\lambda, \mu)$ arises from a choice of unitary matrix $U$. 
In more detail, suppose $A, B, A+B$ is a triple of Hermitian matrices with spectra $\lambda, \mu, \nu$, respectively. 
Let $D_{\lambda}:=\mathrm{diag}(\lambda_1,\dots,\lambda_n)$ be the diagonal matrix with diagonal entries $\lambda_i$. 
We may assume, after an appropriate change of basis, that $B$ is a diagonal matrix and $A$ is diagonalized by a unitary matrix $U$:
$$
A = U D_\lambda U^{-1},\ 
B = D_\mu,\ A+B = U D_\lambda U^{-1} + D_\mu
$$
%the diagonal matrix $A=D_{\lambda}:=\mathrm{diag}(\lambda_1,\dots,\lambda_n)$ with diagonal entries $\lambda$.  Let $U$ be the unitary matrix that %diagonalizes $B$,
%that is, $B=U^{-1} D_\mu U$.  We obtain 
%the following expression for $A+B$:
%\begin{equation}\label{eq:A+B}
%$$
%    A+B = D_{\lambda} + U^{-1}D_{\mu}U
%$$
Thus, by the definition,
\begin{equation}\label{eq:horn=spec}
\Horn_n(\lambda,\mu)=\{\mathrm{spec}(UD_\lambda U^{-1} + D_\mu )\mid U \textrm{ is unitary}\}.
\end{equation}
%This implies that $\Horn_n(\lambda,\mu)$ is a compact connected subset of $\R^n$ that continuously depend on $\lambda$ and $\mu$.
Our goal is to perturb $U$ and study how the spectrum $\nu = \mathrm{spec}(UD_{\lambda}U^{-1} + D_{\mu})$ change. 
The Lie algebra of the unitary Lie group is the algebra of skew-Hermitian matrices $S=-S^*$. Thus,
for a skew-Hermitian matrix $S$, the exponential $e^S$ is unitary. 
We will explicitly compute partial derivatives of the spectrum 
of $(e^{S}U) D_\lambda (e^SU)^{-1} + D_\mu$
at $S=0$. 
Then we will translate the dimension bound for the tangent space to $\Horn_n(\lambda,\mu)$ at $\nu$ into the upper bound 
$\mathrm{rank}(G)\leq n-2$ on rank on a certain matrix $G$ expressed in terms of these partial derivatives.
This bound implies that the associated doubly stochastic matrix $FF^* = I  -  GG^*$ has eigenvalue $1$ of multiplicity $2$ or more. 
%Finally, we will use this fact to deduce the needed splitting property.

Ultimately, our proof of splitting relies on the following classical result that follows from Perron-Frobenius Theorem \cite{perron-frobenius}.
Recall that a (row) \textit{stochastic matrix} is a square matrix with real non-negative entries whose row sums are equal to $1$.  
%Stochastic matrices correspond to Markov chains.  
Clearly, $(1,\dots,1)^T$ is always an eigenvector of a stochastic matrix with eigenvalue 1.

\begin{lemma}\label{lemma:perron-frobenius}
    Let $M$ be a stochastic $n\times n$ matrix with $1$ as an eigenvalue of multiplicity 2 or more. Then $M$ is decomposable. That is, there exists a permutation matrix $P$ and square blocks $M_1, M_2$ such that 
    \[M = P \begin{pmatrix}
        M_1 & 0\\
        0 & M_2
    \end{pmatrix}P^{-1}\]
\end{lemma}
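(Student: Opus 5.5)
We prove the contrapositive: if $M$ is \emph{indecomposable} (irreducible), meaning the directed graph $\Gamma_M$ on $[n]$ with an edge $i\to j$ whenever $M_{ij}>0$ is strongly connected, then the eigenvalue $1$ has multiplicity exactly one. Here ``multiplicity'' is the algebraic multiplicity, the natural reading in the application, where $FF^*$ is Hermitian and algebraic and geometric multiplicities agree. The plan proceeds in three steps.

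\textbf{Step 1: reduce to the strongly connected case.} Suppose $\Gamma_M$ is not strongly connected. Then there is a proper nonempty subset $S\subset[n]$ closed under outgoing edges: no edge leaves $S$. After reordering, $M$ becomes block upper (or lower) triangular with diagonal blocks $M_1$ (indexed by $S$) and $M_2$.

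This is weaker than the block-diagonal form in the statement. In our application, however, $M=FF^*$ is symmetric, so $M_{ij}>0$ if and only if $M_{ji}>0$. Hence $\Gamma_M$ is undirected, and any set closed under outgoing edges is a union of connected components. The off-diagonal blocks therefore vanish and $M$ is genuinely block diagonal.

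For a general stochastic $M$ the claimed block-diagonal form can fail. For instance, $\begin{pmatrix}1&0\\ 1/2&1/2\end{pmatrix}$ is block triangular but has eigenvalue $1$ only once, while the identity gives a trivial example in the other direction. So I would either add symmetry (or double stochasticity) as a hypothesis, or note that ``decomposable'' should be read as block-triangular. In the application the doubly stochastic symmetric case is all that is needed, and there the two readings coincide.

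\textbf{Step 2: the key argument, that irreducibility forces a simple eigenvalue $1$.} Assume $\Gamma_M$ is strongly connected; this is the main obstacle. First, $1$ is geometrically simple, by a maximum-principle argument. Let $Mv=v$ with $v$ real, and let $i$ be an index where $v_i$ is maximal. Since $v_i=\sum_j M_{ij}v_j$ is a convex combination, $v_j=v_i$ for every $j$ with $M_{ij}>0$. Propagating along $\Gamma_M$ shows that $v$ is constant. For complex $v$, apply the same argument to the real and imaginary parts separately.

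Second, $1$ is algebraically simple. Suppose $(M-I)w=\mathbf 1$ with $\mathbf 1=(1,\dots,1)^T$, which is what a Jordan block would require. Take $i$ with $w_i$ maximal. Then $(Mw)_i\le w_i$, so $1=(Mw)_i-w_i\le 0$, a contradiction. Thus the eigenvalue $1$ has no Jordan block.

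In the symmetric application this second part is unnecessary, since $M$ is diagonalizable and geometric simplicity suffices.

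\textbf{Step 3: conclude.} Combining the steps, if $1$ has multiplicity at least $2$, then $\Gamma_M$ is not strongly connected. By Step 1, in the symmetric case $M$ then splits as $P\,\mathrm{diag}(M_1,M_2)P^{-1}$ with square blocks $M_1,M_2$, as claimed.
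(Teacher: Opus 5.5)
Your proof is correct for symmetric stochastic $M$, which is the only case the paper uses ($FF^*=FF^{T}$ because $F$ is real). It is essentially the paper's route. The paper gives no argument beyond citing Perron--Frobenius: for an irreducible nonnegative matrix the eigenvalue $1$ is algebraically simple, so multiplicity $\geq 2$ forces reducibility. You instead prove that simplicity directly with the maximum principle. In the Jordan-chain step, take $w$ real, which you may since $M$ and $\mathbf 1$ are real. You also make explicit the passage from reducible (block triangular) to block diagonal. The paper passes over this step, and it really does need extra structure such as symmetry or double stochasticity.

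One correction: your example does not show that the lemma fails as stated. The matrix $\begin{pmatrix}1&0\\ 1/2&1/2\end{pmatrix}$ has $1$ as a simple eigenvalue, so it does not satisfy the hypothesis, and the identity satisfies the conclusion. The lemma is nonetheless false for general row-stochastic matrices. For instance,
\[
M=\begin{pmatrix}1&0&0\\ 0&1&0\\ 1/2&1/2&0\end{pmatrix}
\]
is stochastic with eigenvalues $1,1,0$, and its fixed space $\{v : v_3=(v_1+v_2)/2\}$ is two-dimensional. Yet index $3$ is linked to both $1$ and $2$, so no simultaneous permutation of rows and columns makes $M$ block diagonal.

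If you want to keep double stochasticity as an alternative to symmetry, you need one more line. Write the reducible form as $\begin{pmatrix}M_1&0\\ C&M_2\end{pmatrix}$ with $M_1$ of size $k$. The first $k$ row sums give $M_1$ total mass $k$. The first $k$ column sums give $M_1$ and $C$ together total mass $k$. Hence $C=0$ by nonnegativity.
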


The reason why a result on stochastic matrices can be applied to unitary matrices is based on the following trivial observation:
The matrix whose entries are squares of absolute values of entries of a unitary matrix is a doubly stochastic matrix. 

Let us now give the details.

\subsection{Boundary point splitting}
%Alex: We are NOT assuming that \mu and \nu have distinct parts in the discussion following Proposition 7.2.
%(assuming that $\mu$ and $\nu$ have distinct entries).

Notice that the trace condition $\mathrm{tr}(A+B)=\mathrm{tr}(A)+\mathrm{tr}(B)$
implies that $\Horn_n(\lambda,\mu)$ belongs to the hyperplane 
$H=\{x_1+\cdots+x_n= 0\}\subset\R^n$ translated by vector $\lambda+\mu$.
Let $\lVert x \rVert:=\sqrt{\sum |x_i|^2}$ denote the standard Euclidean norm on $\R^n$.
For $\epsilon>0$, let $\mathrm{B}_{\epsilon}(\nu)$ be the $(n-1)$-dimensional ball 
$$
\mathrm{B}_{\epsilon}(\nu) := \{x\in \R^n\mid \lVert x-\nu\rVert\leq \epsilon, \ x - \nu \in H\},
$$
i.e., $\mathrm{B}_{\epsilon}(\nu)$ is the $\epsilon$-neighborhood of $\nu$ lying in the same translated hyperplane $H$ as $\nu$.

%thus $\Horn_n(\lambda,\mu)$ and all its tangent spaces are at most $(n-1)$-dimensional.

\begin{definition}\label{def:boundary_points}
Define an \textit{interior point} $\nu\in\Horn_n(\lambda,\mu)$ as a point for which there exists $\epsilon>0$ such that the 
intersection of  ball $\mathrm{B}_{\epsilon}(\nu)$
with the fundamental Weyl chamber $\mathfrak{C}_n$ belongs to $\Horn_n(\lambda,\mu)$:
$$
\mathrm{B}_\epsilon(\nu)\cap\mathfrak{C}_n \subset \Horn_n(\lambda,\mu).
$$
Let $\Horn_n(\lambda,\mu)^\circ$ denote the set of all interior points of $\Horn_n(\lambda,\mu)$.

%Define the \textit{boundary}  $\partial\,\Horn_n(\lambda,\mu)\subset \Horn_n(\lambda,\mu)$ of $\Horn_n(\lambda,\mu)$
%is the 

Define a \textit{boundary point} of $\Horn_n(\lambda,\mu)$  
%$\nu\in \partial\,\Horn_n(\lambda,\mu)\subset \Horn_n(\lambda,\mu)$ 
as a point  that not an interior point.  Let 
%Let $\Horn_n(\lambda,\mu)^\circ$ denote the set of all internal points of $\Horn_n(\lambda,\mu)$, and 
$$
\partial\,\Horn_n(\lambda,\mu) : = \Horn_n(\lambda,\mu)\setminus \Horn_n(\lambda,\mu)^\circ
$$
denote the  
\textit{boundary} of $\Horn_n(\lambda,\mu)$.
\end{definition}

In particular,  if $\Horn_n(\lambda,\mu)$ belongs to an affine subspace of dimension $n-2$,
then, by this definition, all points of $\Horn_n(\lambda,\mu)$ are automatically boundary points: $\partial\, \Horn_n(\lambda,\mu) = \Horn_n(\lambda,\mu)$.

\begin{remark}
The boundary $\partial\,\Horn_n(\lambda,\mu)$ is \textit{not} the usual boundary
%of the subset $\Horn_n(\lambda,\mu)\subset \R^n$ 
with respect to the Euclidean topology on $\R^n$.   For example, it may not include some points of $\Horn_n(\lambda,\mu)$ lying on facets of the cone $\mathfrak{C}_n$.
The boundary $\partial\, \Horn_n(\lambda,\mu)$ is rather the intersection of the Euclidean boundary of the symmetrization
$$
\mathrm{Sym}(\Horn_n(\lambda,\mu)):=\bigcup_{w\in S_n}w(\Horn_n(\lambda,\mu))
$$
with the cone $\mathfrak{C}_n$.
\end{remark}

\begin{definition}
Let us say that a Horn triple $(\lambda,\mu,\nu)$ is \textit{splittable} if there exist Horn triples 
$(\lambda',\mu',\nu')$ and $(\lambda'',\mu'',\nu'')$ such that $\lambda=\lambda'\oplus\lambda''$, $\mu=\mu'\oplus \mu''$, and 
$\nu=\nu'\oplus\nu''$.  
\end{definition}

Let us formulate the main result of this section.

\begin{theorem}\label{thm:boundary_splitting_horn}
Let $n\geq 2$.
    For any $\lambda, \mu\in \mathfrak{C}_n$ and any boundary point
    $\nu\in \partial\,\Horn_n(\lambda,\mu)$, the Horn triple $(\lambda,\mu,\nu)$ is splittable.
\end{theorem}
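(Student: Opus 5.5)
We follow the outline of Subsection~\ref{ssec:proof_outline_splitting}: first treat the generic case where $\mu$ and $\nu$ have distinct parts, then pass to the general case by a limiting argument.

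\textbf{Generic case.} Write $\nu=\mathrm{spec}(UD_\lambda U^{-1}+D_\mu)$ as in \eqref{eq:horn=spec}. After conjugating by a unitary $V$, we may take $C=VUD_\lambda U^{-1}V^{-1}+VD_\mu V^{-1}=D_\nu$. So we work with the triple $A=WD_\lambda W^{-1}$, $B=VD_\mu V^{-1}$, $C=D_\nu$, where $W=VU$.

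Perturb $U\mapsto e^{S}U$ with $S$ skew-Hermitian. Since $\nu$ has distinct parts, first-order perturbation theory gives
$$\partial\nu_k=\langle v_k,[S',A]v_k\rangle,$$
where $S'=VSV^{-1}$ and $v_k$ is the $k$-th standard basis vector. Equivalently, $\partial\nu_k=([S',A])_{kk}$.

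Because $A=C-B$ and $C$ is diagonal, $([S',C])_{kk}=0$. Hence
$$\partial\nu_k=-([S',B])_{kk}=-\sum_{j}\bigl(S'_{kj}B_{jk}-B_{kj}S'_{jk}\bigr).$$
It is cleaner to keep $B=D_\mu$ fixed and perturb instead by $A\mapsto e^{S}Ae^{-S}$. Then the derivative of $\nu_k$ is
$$\sum_{j}(\mu_j-\ldots)\,\bigl(\text{terms } \overline{V_{jk}}V_{jk}\bigr),$$
that is, a bilinear expression in the entries of $V$ (the unitary diagonalizing $A+B$ relative to the basis of $B$) and in $S$.

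Concretely, I expect $d\nu_k$ to be a linear functional of $S$ whose coefficients are $\overline{V_{ik}}V_{jk}(\mu_i-\mu_j)$. Define the real-linear map $L\colon S\mapsto(d\nu_k)_k$.

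\textbf{Rank bound.} If $\nu$ is a boundary point, the image of $L$ must have dimension at most $n-2$. Otherwise the differential is onto the hyperplane $H$. Since $\mu$ and $\nu$ are distinct, the map $U\mapsto\nu$ is smooth near $U$, and the submersion/implicit function theorem gives a whole ball $\mathrm{B}_\epsilon(\nu)\subset\Horn_n(\lambda,\mu)$. That contradicts boundarity. Note this step does not need convexity.

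Next, encode the image of $L$ as the column space of a real matrix $G$ built from the entries of $V$, so that $\mathrm{rank}(G)\le n-2$. The target identity is $FF^*=I-GG^*$, where $F$ is the matrix with entries $|V_{ik}|^2$ (doubly stochastic), or a closely related stochastic matrix. The rank bound then forces eigenvalue $1$ of $FF^*$ with multiplicity $\ge2$.

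\textbf{Obtaining the splitting.} Apply Lemma~\ref{lemma:perron-frobenius}. The stochastic matrix $FF^*$ is decomposable, so there is a nontrivial partition $[n]=K\sqcup K^c$ invariant under it. Because the entries are nonnegative, this forces $|V_{ik}|^2=0$ whenever $k\in K$ and $i\notin J$, for a corresponding index set $J$ with $|J|=|K|$.

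So $V$ is block-diagonal up to permutation. Thus the eigenvectors of $A+B$ indexed by $K$ span the coordinate subspace $E_J$, which is $B$-invariant. Then $E_J$ is invariant under $A=C-B$ as well. Restricting $A$, $B$, $C$ to $E_J$ and to $E_J^\perp$ gives Horn triples $(\lambda',\mu',\nu')$ and $(\lambda'',\mu'',\nu'')$ whose direct sums recover $(\lambda,\mu,\nu)$.

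\textbf{Main obstacle: the rank identity.} The hardest step is establishing $FF^*=I-GG^*$ exactly, with $G$ real and correctly normalized. The complex case is where Horn's real argument fails. I would first try splitting $S$ into real and imaginary parts, so that each pair $(i,j)$ contributes two real directions whose combined contributions produce $|V_{ik}|^2|V_{jk}|^2$-type Gram entries.

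\textbf{General case.} When $\mu$ or $\nu$ has repeated parts, use closedness. Approximate $\mu$ by $\mu^{(t)}$ with distinct parts, and $\nu$ by boundary points $\nu^{(t)}\in\partial\Horn_n(\lambda,\mu^{(t)})$ with distinct parts. This approximation needs care: if $\nu$ has repeats, pick $\nu^{(t)}$ near $\nu$ outside the symmetrized set, and project. Each approximating triple is splittable. Passing to a subsequence with a fixed $m$ and fixed index sets, and using compactness of the unitary group, the limits give a splitting of $(\lambda,\mu,\nu)$.
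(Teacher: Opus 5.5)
Your generic case is essentially the paper's argument. The reduction of the general case, however, has a genuine gap. The final limiting step is fine: fix $(I,J,K)$ along a subsequence and use that Horn triples form a closed set. But it presupposes boundary points $\nu^{(t)}\in\partial\,\Horn_n(\lambda,\mu^{(t)})$ with \emph{distinct} parts converging to $\nu$, and producing these is the whole content of the general case. ``Pick a point outside the symmetrized set and project'' does not deliver them. Nothing in your argument prevents the nearest point of a closed subset of $\mathfrak{C}_n$ to a point of the open chamber from lying on a wall $\{x_i=x_{i+1}\}$ (think of a corner of the set sitting on the wall with your point slightly behind it), and there the generic case says nothing. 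You also need non-Horn points relative to $(\lambda,\mu^{(t)})$, not $(\lambda,\mu)$; this matters even when only $\mu$ has repeated parts.

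What works is a first-hitting argument between two points of the \emph{open} chamber:
\begin{itemize}
\item Since $\nu$ is a boundary point, there are non-Horn $x_0\in\mathfrak{C}_n$ on the trace hyperplane arbitrarily close to $\nu$. Since Horn triples are closed in $\R^{3n}$, once $\mu^{(t)}$ is close enough to $\mu$ you can move to $(\lambda,\mu^{(t)},x)$, still non-Horn, with $x$ strictly inside $\mathfrak{C}_n$ on the new trace hyperplane.
\item There is a Horn point $y=\mathrm{spec}(U'D_\lambda U'^{-1}+D_{\mu^{(t)}})$ near $\nu$ with distinct parts. Take $U'$ generic near $U$: the discriminant is real-analytic on the connected unitary group and nonzero at $U'=I$, where the spectrum is $\lambda+\mu^{(t)}$.
\item The segment $[x,y]$ stays in the open chamber and in the trace hyperplane. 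Its first Horn point, starting from $x$, is a boundary point of $\Horn_n(\lambda,\mu^{(t)})$ with distinct parts, within $\max(\lVert x-\nu\rVert,\lVert y-\nu\rVert)$ of $\nu$.
\end{itemize}
This is what the paper's $t_{\mathrm{inf}}$ construction accomplishes. There it is packaged as a contradiction: a neighbourhood of a non-splittable triple would consist entirely of Horn triples, forcing $\nu$ to be interior.

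Conversely, the step you flag as the main obstacle is routine. Let $g^{pq}_i=\overline{Y_{pi}}Y_{qi}$ (your $V$ is the paper's $Y$), and let $G$ be the complex matrix with columns $g^{pq}$, $p\neq q$. Then
$(FF^*+GG^*)_{ij}=\sum_{p,q}\overline{Y_{pi}}Y_{pj}\,Y_{qi}\overline{Y_{qj}}=\lvert\langle Y_i,Y_j\rangle\rvert^2=\delta_{ij}$.
Your real version works as well. Pairing $(p,q)$ with $(q,p)$ shows $GG^*=2G_{\R}G_{\R}^{T}$, where $G_{\R}$ has real columns $\mathrm{Re}\,g^{pq}$ and $\mathrm{Im}\,g^{pq}$ for $p<q$. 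Because $\mu$ has distinct parts, the column space of $G_{\R}$ is precisely $T_\nu$; this is exactly where that hypothesis enters, since the factors $\mu_p-\mu_q$ must be nonzero. Hence $\mathrm{rank}(I-FF^*)=\dim T_\nu\le n-2$ directly, with no need for the paper's comparison $\dim_{\C}V\le\dim_{\R}\mathrm{Re}(V)$. The rest of your generic case matches the paper and is sound: Perron--Frobenius, the support argument with $|J|=|K|$ from double stochasticity, and the invariant-subspace finish.
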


\begin{corollary}
If $\Horn_n(\lambda,\mu)$ belongs to an affine subspace of dimension $n-2$, then all
Horn triples $(\lambda,\mu,\nu)$ are splittable.
%, for $\nu\in \Horn_n(\lambda,\mu)$, is splittable.
\end{corollary}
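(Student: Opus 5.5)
The statement is the Corollary: if $\Horn_n(\lambda,\mu)$ lies in an affine subspace of dimension $n-2$, then every Horn triple $(\lambda,\mu,\nu)$ is splittable. The plan is to deduce it directly from Theorem~\ref{thm:boundary_splitting_horn}, using the remark made right after Definition~\ref{def:boundary_points}. Throughout we assume $n\geq 2$, as in the theorem. For $n=1$ the set $\Horn_1(\lambda,\mu)$ is a single point, which is trivially contained in a $(-1)$-dimensional subspace only in a degenerate sense, so the corollary is meant for $n\geq 2$.

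\emph{Step 1: every point is a boundary point.} Fix $\nu\in\Horn_n(\lambda,\mu)$, and suppose for contradiction that $\nu$ is an interior point in the sense of Definition~\ref{def:boundary_points}. Then there is $\epsilon>0$ with $\mathrm{B}_\epsilon(\nu)\cap\mathfrak{C}_n\subset\Horn_n(\lambda,\mu)$. We need to show that $\mathrm{B}_\epsilon(\nu)\cap\mathfrak{C}_n$ is not contained in any affine subspace of dimension $n-2$.

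To see this, take any point $x$ in the relative interior of $\mathfrak{C}_n\cap(\nu+H)$, for example a point with strictly decreasing coordinates summing to $\sum\nu_i$. The segment from $\nu$ toward $x$ stays in $\mathfrak{C}_n\cap(\nu+H)$ because this set is convex. Points on that segment arbitrarily close to $\nu$ have strictly decreasing coordinates.

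So $\mathrm{B}_\epsilon(\nu)\cap\mathfrak{C}_n$ contains a point $y$ with distinct coordinates. It then contains a small $(n-1)$-dimensional ball around $y$ inside $\nu+H$, and such a ball is not contained in any $(n-2)$-dimensional affine subspace. This contradicts the hypothesis, so $\partial\,\Horn_n(\lambda,\mu)=\Horn_n(\lambda,\mu)$.

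\emph{Step 2: apply the theorem.} Since every $\nu\in\Horn_n(\lambda,\mu)$ is a boundary point, Theorem~\ref{thm:boundary_splitting_horn} gives that each Horn triple $(\lambda,\mu,\nu)$ is splittable.

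The only point needing care is Step 1, namely that $\mathfrak{C}_n$ cuts every neighborhood of $\nu$ in $\nu+H$ in a set that is full-dimensional within $\nu+H$. This holds because the cone $\mathfrak{C}_n$ has nonempty interior relative to each translated hyperplane $\nu+H$, and it is convex.
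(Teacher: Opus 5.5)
Your proof is correct and follows the paper's route. The paper notes, in the remark right after Definition~\ref{def:boundary_points}, that under this hypothesis every point of $\Horn_n(\lambda,\mu)$ is automatically a boundary point, and then applies Theorem~\ref{thm:boundary_splitting_horn}; your Step~1 simply supplies the detail the paper leaves implicit, namely that $\mathrm{B}_\epsilon(\nu)\cap\mathfrak{C}_n$ contains a relatively open $(n-1)$-dimensional ball of $\nu+H$ around a nearby strictly decreasing point.
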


Theorem~\ref{thm:boundary_splitting_horn} implies the splitting axiom using the following two simple observations.

\begin{lemma}  Let $\nu$ be an extreme point of $\Horn_n(\lambda,\mu)$ then either $\nu\in\partial\, Horn_n(\lambda,\mu)$ or
$\nu= c\,(1,\dots,1)$.
\end{lemma}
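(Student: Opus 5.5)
We argue by contraposition: if $\nu$ is an extreme point of $\Horn_n(\lambda,\mu)$ that is an interior point in the sense of Definition~\ref{def:boundary_points}, then $\nu=c\,(1,\dots,1)$. Fix $\epsilon>0$ with $\mathrm{B}_\epsilon(\nu)\cap\mathfrak{C}_n\subset\Horn_n(\lambda,\mu)$.

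The first step is to locate the directions along which we can move. Group the entries of $\nu$ into maximal blocks of equal values, with index sets $I_1,\dots,I_k$ of sizes $m_1,\dots,m_k$, listed in decreasing order of value. Suppose $\nu$ is not a constant vector, so $k\geq 2$. Take the first two blocks and set
$$
b=\delta\Bigl(m_2\sum_{s\in I_1}e_s-m_1\sum_{t\in I_2}e_t\Bigr),
$$
with $\delta>0$ small. This is the same multiplicity-preserving vector used in the proof of Theorem~\ref{thm:prop:breathing_paths}. Its coordinates sum to $m_2m_1\delta-m_1m_2\delta=0$, so $\nu\pm b$ lies in the translated hyperplane $\nu+H$. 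Since $b$ is nonzero, it remains to show that $\nu\pm b$ lies in $\mathrm{B}_\epsilon(\nu)\cap\mathfrak{C}_n$ for small $\delta$.

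Take $\delta$ small enough that $\lVert b\rVert\leq\epsilon$ and that $\max(m_1,m_2)\,\delta$ is less than half the minimal gap between distinct entries of $\nu$. The chamber condition is the step that needs care. Inside each block, $\nu\pm b$ is shifted by a constant, so equal entries stay equal. Across block boundaries, each shift is smaller than half the gap, so the strict inequalities between blocks are preserved. In particular, blocks $I_1$ and $I_2$ move toward or away from each other but do not cross. Hence $\nu\pm b\in\mathfrak{C}_n$, and therefore $[\nu-b,\nu+b]\subset\mathrm{B}_\epsilon(\nu)\cap\mathfrak{C}_n\subset\Horn_n(\lambda,\mu)$. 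This contradicts the extremality of $\nu$.

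So an extreme point that is not a boundary point has only one block, that is, $\nu=c\,(1,\dots,1)$. Here $c$ is forced by the trace condition to equal $(\sum\lambda_i+\sum\mu_i)/n$. The main point to get right is the choice of $b$: it must respect the multiplicities so that $\nu\pm b$ stays inside the Weyl chamber. A naive perturbation $e_i-e_j$ can leave $\mathfrak{C}_n$ when $\nu$ has repeated entries, and the interior-point definition only controls points that lie in $\mathfrak{C}_n$.
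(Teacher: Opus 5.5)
Your proof is correct and follows the paper's argument: the paper also shows that an interior point $\nu\neq c\,(1,\dots,1)$ admits a segment $[\nu-b,\nu+b]\subset\mathrm{B}_\epsilon(\nu)\cap\mathfrak{C}_n$, using the same multiplicity-preserving vector $b$ from the proof of Theorem~\ref{thm:prop:breathing_paths}. You also give the explicit choice of $\delta$ that keeps $\nu\pm b$ inside the chamber, a step the paper leaves implicit.
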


\begin{proof}
    Assume that $\nu\in\Horn_n(\lambda,\mu)^\circ$ is an interior point.  Then there exists $\epsilon>0$ such 
    that $B_\epsilon(\nu)\cap \mathfrak{C}_n \subset \Horn_n(\lambda,\mu)$.  If $\nu$ is not of the form $c\,(1,\dots,1)$,
    then $B_\epsilon(\nu)\cap \mathfrak{C}_n$ contains the line segment $[\nu-b,\nu+b]$, for some nonzero vector $b$.
    For example, we can take the same vector $b$ as the vector we used in the proof of Theorem~\ref{thm:prop:breathing_paths}
    (vertex splitting for honeycombs).
    Thus $\nu$ is not an extreme point, which proves the needed claim.
\end{proof}

\begin{lemma}
Any Horn triple $(\lambda,\mu,\nu)$ with $\nu=c\,(1,\dots,1)$ is splittable.
\end{lemma}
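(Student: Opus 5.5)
We need to show: if $(\lambda,\mu,\nu)$ is a Horn triple with $\nu=c\,(1,\dots,1)$ and $n\geq 2$, then it splits. Take Hermitian $A,B$ with spectra $\lambda,\mu$ and $A+B=C$ having spectrum $c\,(1,\dots,1)$. The plan rests on one observation: a Hermitian matrix whose eigenvalues are all equal to $c$ is unitarily conjugate to $cI$, and hence is $cI$ itself. So $C=cI$, and therefore $B=cI-A$.

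Consequently $A$ and $B$ commute and are simultaneously diagonalizable. Choose a unitary $U$ with $U^{-1}AU=D_\lambda$. Then $U^{-1}BU=cI-D_\lambda$, so $\mu$ is the multiset $\{c-\lambda_i\}$, sorted nonincreasingly; in fact $\mu_i=c-\lambda_{n+1-i}$. The triple is therefore realized by diagonal matrices in which the $i$-th diagonal entries of $A$ and $B$ are $\lambda_i$ and $c-\lambda_i$, and those of $C$ equal $c$.

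Now split off the first coordinate, taking $m=1$. Set
$$\lambda'=(\lambda_1),\quad \mu'=(c-\lambda_1),\quad \nu'=(c),$$
and let $\lambda''=(\lambda_2,\dots,\lambda_n)$, let $\mu''$ be the nonincreasing rearrangement of $(c-\lambda_2,\dots,c-\lambda_n)$, and let $\nu''=c\,(1,\dots,1)\in\mathfrak{C}_{n-1}$.

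Both pieces are Horn triples. The first is a Horn triple by the base case. The second is realized by the $(n-1)\times(n-1)$ diagonal matrices $\mathrm{diag}(\lambda_2,\dots,\lambda_n)$, $\mathrm{diag}(c-\lambda_2,\dots,c-\lambda_n)$, and $cI$.

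Finally, the direct sums recover the original triple, since $\oplus$ just concatenates and sorts:
$$\lambda'\oplus\lambda''=\lambda,\qquad \mu'\oplus\mu''=\mathrm{sort}\{c-\lambda_i\}=\mu,\qquad \nu'\oplus\nu''=\nu.$$
So $(\lambda,\mu,\nu)$ is splittable, with $0<m=1<n$.

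The argument has no real obstacle. The only step needing care is the first, deducing $C=cI$ from its spectrum. This holds because $C=V(cI)V^{-1}=cI$ for the unitary $V$ that diagonalizes $C$.
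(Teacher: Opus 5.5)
Your proof is correct and follows the same approach as the paper: a Hermitian matrix with spectrum $c\,(1,\dots,1)$ must equal $cI$, so $A$ and $B=cI-A$ are simultaneously diagonalizable. The only difference is that you write out the explicit $m=1$ splitting, which the paper leaves as obvious.
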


\begin{proof}  
In this case, we have splitting for obvious reasons.   Indeed, $A+B$ should be $c\,I$, and thus matrices $A$, $B$, $A+B$
are simultaneously diagonalizable. 
\end{proof}

Let us first prove Theorem~\ref{thm:boundary_splitting_horn} in the case when $\mu$ and $\nu$ have distinct parts.

\subsection{The case of $\mu$ and $\nu$ with distinct parts}

%Let us say that a point $\nu\in\Horn_n(\lambda,\mu)$ is a \textit{boundary point} if $\dim(T_\nu(\Horn_n(\lambda,\mu)))\leq n-2$.
%In particular, if $\Horn_n(\lambda,\mu)$ belongs to an affine subspace of dimension $n-2$,
%then all points of $\Horn_n(\lambda,\mu)$ are automatically boundary points.

\begin{proposition}\label{prop:vertex_splitting}
    Let $n\geq 2$.   Let  $\nu\in\partial\,\Horn_n(\lambda, \mu)$ be a boundary point.  Assume that both $\mu$ and $\nu$ have all distinct parts.
    %, i.e., $\mu$ and $\nu$ are strictly in the interior of the fundamental chamber $\mathfrak{C}_n$. 
    Then any Hermitian matrices $A, B, A+B$ with spectra $\lambda, \mu, \nu$, respectively, are simultaneously block diagonalizable
    with two nontrivial diagonal blocks.
    %i.e., there exists a unitary matrix $U$ such that $U^{-1}AU$, $U^{-1}BU$, and thus $U^{-1}(A+B)U$ are block diagonal with two square blocks 
    %of sizes $m\geq 1$ and $(n-m)\geq 1$ each.
    In particular, the spectra of all blocks are given by subsequences of $\lambda, \mu$, $\nu$; 
    thus the Horn triple $(\lambda,\mu,\nu)$ is splittable.
%  We have such simultaneous block diagonalization, for boundary points $\nu\in\Horn_n(\lambda,\mu)$ if $\dim\Horn_n(\lambda,\mu)=n-1$;
%   and for all points $\nu\in\Horn_n(\lambda,\mu)$ if $\dim\Horn_n(\lambda,\mu) \leq n-2$.
\end{proposition}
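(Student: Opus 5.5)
We plan to work in an eigenbasis of $C=A+B$ rather than of $B$, and to show that the infinitesimal perturbations of $\nu$ fill the trace-zero hyperplane unless $B$ is block diagonal in that basis. So assume $C=D_\nu$ with $\nu$ strictly decreasing, and write $B=W D_\mu W^{-1}$ and $A=D_\nu-B$. We keep $A$ fixed and conjugate $B$ by unitaries close to the identity. For a skew-Hermitian $S$, set
$$
C(S)=A+e^{S}Be^{-S},
$$
so that $\mathrm{spec}(C(S))\in\Horn_n(\lambda,\mu)$ for every $S$, by \eqref{eq:horn=spec} with the roles of $A$ and $B$ exchanged. Since the parts of $\nu$ are distinct, the ordered eigenvalues $\nu_k(S)$ of $C(S)$ are real-analytic in $S$ near $S=0$. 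By first-order perturbation theory, the derivative of $\nu_k$ in the direction $S$ is the $(k,k)$ entry of $[S,B]$. Using $S_{jk}=-\overline{S_{kj}}$ and $B_{jk}=\overline{B_{kj}}$, this gives
$$
d\nu(S)=\sum_{k<j} 2\,\mathrm{Re}\bigl(S_{kj}B_{jk}\bigr)\,(e_k-e_j).
$$
We expect to get this from a short computation, and only the shape of the answer matters.

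Since the entries $S_{kj}$ for $k<j$ are free complex numbers, the image of the real-linear map $S\mapsto d\nu(S)$ is exactly
$$
\mathrm{span}\{e_k-e_j : B_{kj}\neq 0\}.
$$
Let $\Gamma$ be the graph on $[n]$ with an edge $\{k,j\}$ whenever $B_{kj}\neq 0$. By the standard fact about incidence vectors of graphs, this span has dimension $n-c(\Gamma)$, where $c(\Gamma)$ is the number of connected components of $\Gamma$. The next step is to prove the contrapositive of the proposition: if $\Gamma$ is connected, then $\nu$ is an interior point. In that case the smooth map $S\mapsto\nu(S)$, whose values lie in the translated hyperplane $\nu+H$, has surjective differential onto $H$ at $S=0$. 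By the implicit function theorem (or the submersion theorem), its image contains a full ball $\mathrm{B}_\epsilon(\nu)$. Shrinking $\epsilon$ below half the minimal gap between the $\nu_i$, every point of this ball is already nonincreasing, so $\mathrm{B}_\epsilon(\nu)\cap\mathfrak{C}_n=\mathrm{B}_\epsilon(\nu)\subset\Horn_n(\lambda,\mu)$. This contradicts $\nu\in\partial\,\Horn_n(\lambda,\mu)$.

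Hence $\Gamma$ is disconnected. Fix a component $K$, with $\emptyset\neq K\subsetneq[n]$. By the definition of $\Gamma$, $B_{kj}=0$ whenever $k\in K$ and $j\notin K$. So a permutation of the eigenbasis of $C$ makes $B$ block diagonal with blocks indexed by $K$ and $K^c$. The matrix $C=D_\nu$ is diagonal, so $A=C-B$ has the same block form. Thus $A$, $B$, $A+B$ are simultaneously block diagonal with two nontrivial blocks. The spectrum of each matrix is the union of the spectra of its blocks, so each block spectrum is a subsequence of $\lambda$, $\mu$, or $\nu$ respectively. This exhibits $(\lambda,\mu,\nu)$ as a direct sum of two Horn triples of sizes $|K|$ and $n-|K|$, so the triple is splittable.

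The step we expect to be the main obstacle is the passage from ``the differential is onto $H$'' to ``$\nu$ is an interior point'' in the sense of Definition~\ref{def:boundary_points}. This needs smoothness of the ordered eigenvalues, which is exactly where distinctness of $\nu$ is used, together with a careful application of the submersion theorem on the unitary group. The first-order formula itself is routine. In this route the distinctness of $\mu$ seems unnecessary, because we conjugate $B$ directly rather than working with its eigenvectors. If the argument needs adjusting, the fallback is the paper's outline: express $d\nu$ through the entries of a unitary change of basis, form the doubly stochastic matrix of their squared absolute values, and conclude decomposability from Lemma~\ref{lemma:perron-frobenius}.
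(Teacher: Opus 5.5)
Your proof is correct, and it takes a genuinely different and shorter route than the paper.

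The paper works in the eigenbasis of $B=D_\mu$. It expresses the tangent vectors $d^\tau_{pq}$ through the unitary $Y$ that diagonalizes $A+B$, and collects them in the complex matrix $G$ with entries $\overline{Y_{pi}}Y_{qi}$. It then uses Lemma~\ref{lemma:Re_complex} to get $\mathrm{rank}_\C G\le n-2$. Finally it uses the identity $FF^*+GG^*=I$, with the doubly stochastic $F=(|Y_{pi}|^2)$, together with Perron--Frobenius (Lemma~\ref{lemma:perron-frobenius}), to force $F$ and hence $Y$ into permuted block form.

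You work in the eigenbasis of $A+B$ instead, and there the tangent space can be read off exactly: $T_\nu=\mathrm{span}\{e_k-e_j : B_{kj}\neq 0\}$, which has dimension $n-c(\Gamma)$. So $\dim T_\nu\le n-2$ is literally equivalent to $B$ being block diagonal in the eigenbasis of $A+B$. The rank of a graph incidence matrix replaces the whole $F$, $G$, Perron--Frobenius chain. Your ``interior point'' step is the same inverse-function argument as the paper's Lemma~\ref{lem:boundary=>dim}, so it is not a real obstacle.

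Your route gains three things:
\begin{enumerate}
\item It drops the hypothesis that $\mu$ has distinct parts. The paper needs that hypothesis only to divide by $\mu_p-\mu_q$. For constant $\mu$, for example, your $\Gamma$ has no edges, while the paper's $Y$ can be an arbitrary unitary. This means the reduction in the repeated-eigenvalue subsection would only need to perturb $\nu$.
\item It gives the exact count $\dim T_\nu = n-(\text{number of blocks})$, which is the Hermitian analogue of the face-dimension statement for frameworks.
\item It makes the parallel with the honeycomb proof of Theorem~\ref{thm:prop:breathing_paths} transparent, since each edge of $\Gamma$ contributes exactly a ``breathing'' direction $e_k-e_j$.
\end{enumerate}

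The paper's route mainly buys the connection to Horn's original argument and the unitary-to-doubly-stochastic dictionary it highlights.
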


%We will prove Proposition~\ref{prop:vertex_splitting} in the next section using the method outlined above.

%\begin{corollary}\label{cor:vertex_splitting_interior}
%Any triple $(\lambda,\mu,\nu)$ that satisfies the conditions of Proposition~\ref{prop:vertex_splitting}
%is splittable.
%\end{corollary}

\begin{remark}
    Our approach to proving Proposition~\ref{prop:vertex_splitting}
    relies on the multiplicity-freeness of $\nu$; otherwise, the derivatives of the coordinates $\nu_i$ are not well-defined. However, 
    %given the fact that $\Horn_n(\lambda, \mu)$ is not just a closed set but a polytope, 
    we may use this result to prove splitting for all extreme points $\nu\in \Horn_n(\lambda,\mu)$ with repeated parts.  For this extension, it is important to prove 
    Proposition~\ref{prop:vertex_splitting} not just for extreme points, but for all boundary points of
    $\Horn_n(\lambda,\mu)$ in the interior of the fundamental Weyl chamber $\mathfrak{C}_n$. 
\end{remark}

Recall  that we assumed that $A= U D_\lambda U^{-1}$, $B=D_\mu$, and $A+B=UD_\lambda U^{-1}+ D_\mu $ has spectrum $\nu$,
where $\mu$ and $\nu$ have all distinct parts.  
Given a skew-Hermitian matrix $S$ and $t\in \R$, let $\nu(t)= \nu_S(t)$ be the spectrum 
$$
\nu(t):= \mathrm{spec}((e^{tS}U)D_\lambda (e^{tS}U)^{-1} + D_\mu) =
 \mathrm{spec}(UD_\lambda U^{-1} + e^{-tS} D_\mu e^{tS}) =
 \mathrm{spec}(A + e^{-tS}Be^{tS}),
$$
which belongs to $\Horn_n(\lambda,\mu)$ by definition.
Then $\nu(t)$ is a continuous differentiable function of $t$ defined on some interval $[-a,a]$.
Here $a>0$ is taken to be sufficiently small so that
no pair of eigenvalues collide while $t$ ranges in the interval $[-a,a]$.
(This step only makes sense because we assumed that the parts of $\nu$ are distinct.)
Let $d_S\in\R^n$ be the derivative of $\nu_S(t)$ at $t=0$:
%\begin{equation}\label{eq:d_S}
$$
d_S := \left({d\,\nu_S(t)\over dt}\right)_{t=0}
$$
%\end{equation}
The map $S\mapsto d_S$ is an $\R$-linear map from the space (Lie algebra) of skew-Hermitian matrices $S$ to $\R^n$.

Define the \textit{tangent space} $T_\nu=T_\nu(\lambda,\mu)$ 
to $\Horn_n(\lambda,\mu)$ at point $\nu$ as the image of this map:
$$
T_\nu =T_\nu(\lambda,\mu) := \{d_S\mid S \textrm{ is a skew-Hermitian}\}\subset \R^n.
$$
Clearly, $T_\nu$ is a linear subspace of $\R^n$. Moreover, $T_\nu$ is a subspace of the 
hyperplane $H=\{x_1+\dots+x_n=0\}$.  Thus the dimension of $T_\nu$ is at most $n-1$. 

\begin{lemma}\label{lem:boundary=>dim}
Assume that $\nu$ has all distinct parts.
If $\nu\in\partial\,\Horn_n(\lambda,\mu)$ is a boundary point, then 
$\dim T_\nu(\lambda,\mu)\leq n-2$.
\end{lemma}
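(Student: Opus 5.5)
We argue by contraposition: assuming $\dim T_\nu(\lambda,\mu)=n-1$, i.e.\ $T_\nu$ is the whole hyperplane $H$, we show that $\nu$ is an interior point in the sense of Definition~\ref{def:boundary_points}. The tool is the inverse function theorem applied to the smooth map
$$
\Phi:\ \mathfrak{u}_n\to \R^n,\qquad \Phi(S)=\mathrm{spec}\bigl(A+e^{-S}Be^{S}\bigr),
$$
defined on a small neighborhood of $0$ in the space $\mathfrak{u}_n$ of skew-Hermitian matrices. Because $\nu$ has distinct parts, the eigenvalues of $A+e^{-S}Be^{S}$ remain simple for $S$ near $0$. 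By standard perturbation theory for simple eigenvalues (or the implicit function theorem applied to the characteristic polynomial), each sorted eigenvalue is therefore a smooth function of $S$. Its values lie in $\Horn_n(\lambda,\mu)\cap(\nu+H)$, by \eqref{eq:horn=spec} and the trace condition.

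Next we identify the differential. The differential $d\Phi_0$ is the map $S\mapsto d_S$, by the chain rule along the line $t\mapsto tS$. Its image is therefore $T_\nu$, which by assumption equals $H$. Choose $n-1$ skew-Hermitian matrices $S_1,\dots,S_{n-1}$ with $d_{S_1},\dots,d_{S_{n-1}}$ a basis of $H$, and restrict $\Phi$ to the $(n-1)$-dimensional slice
$$
\psi(t_1,\dots,t_{n-1}) = \Phi\Bigl(\sum_k t_k S_k\Bigr)-\nu \in H .
$$
Then $\psi$ is a smooth map $\R^{n-1}\to H$ near $0$ whose Jacobian at $0$ is invertible. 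By the inverse function theorem, $\psi$ maps a neighborhood of $0$ onto a neighborhood of $0$ in $H$. Hence there is $\epsilon>0$ with $\mathrm{B}_\epsilon(\nu)\subset \Horn_n(\lambda,\mu)$, and in particular $\mathrm{B}_\epsilon(\nu)\cap\mathfrak{C}_n\subset\Horn_n(\lambda,\mu)$. So $\nu$ is an interior point, contradicting the hypothesis.

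The delicate step is ensuring that $\Phi$ is genuinely $C^1$, not merely differentiable along lines, so that the inverse function theorem applies. Distinctness of the parts of $\nu$ is exactly what guarantees this: near a matrix with simple spectrum, the ordered eigenvalues are real-analytic functions of the matrix entries. No convexity of $\Horn_n(\lambda,\mu)$ is used; we only need that every spectrum in the image of $\Phi$ lies in $\Horn_n(\lambda,\mu)$. It also helps that, since $\nu$ has distinct parts, $\nu$ lies in the interior of $\mathfrak{C}_n$. For $\epsilon$ smaller than half the minimal gap between entries of $\nu$, the ball $\mathrm{B}_\epsilon(\nu)$ therefore lies entirely inside $\mathfrak{C}_n$, so the sorting of eigenvalues is consistent with the coordinates throughout the ball.
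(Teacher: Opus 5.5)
Your proof is correct and follows the same route as the paper. You assume $\dim T_\nu = n-1$, note that $S\mapsto d_S$ is the differential at $S=0$ of the $C^1$ map $S\mapsto \mathrm{spec}(A+e^{-S}Be^{S})$, and use local surjectivity to place a ball $\mathrm{B}_\epsilon(\nu)$ inside $\Horn_n(\lambda,\mu)$; your inverse function theorem on an $(n-1)$-dimensional slice, and your justification that distinct parts of $\nu$ make the map genuinely $C^1$, simply make precise details the paper only asserts.
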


\begin{proof}
Suppose the tangent space $T_\nu(\lambda,\mu)$ has maximal possible dimension $n-1$. Then the map $S\mapsto\nu_S(1)$ from the space
of skew-Hermitian matrices $S$ to the affine hyperplane $\{x_1+\cdots+x_n = \sum \lambda_i + \sum \mu_i\}\subset\R^n$ is locally surjective
at $S=0$.   Indeed, the (matrix of the) linear map $S\mapsto d_S$ is the Jacobian of the map $S\mapsto \nu_S(1)$ at $S=0$.
Surjectivity of the Jacobian of a continuously differentiable map implies local surjectivity of the map.
Thus $\Horn_n(\lambda,\mu)$ contains a small $(n-1)$-dimensional ball $\mathrm{B}_\epsilon(\nu)$ at $\nu$ for some $\epsilon>0$.
Thus $\nu$ is not a boundary point of $\Horn_n(\lambda,\mu)$, 
which implies the needed claim.
\end{proof}

%Now assume that $\nu$ has all distinct parts; so the above discussion applies.
%If $\nu$ is an internal point of $\Horn_n(\lambda,\mu)$, see Definition~\ref{def:boundary_points}, then $\dim T_\nu(\Horn_n(\lambda,\mu))=n-1$. 
%To prove the lemma in the opposite direction, assume that $\dim T_\nu(\Horn_n(\lambda,\mu))=n-1$, that is, $T_\nu(\Horn_n(\lambda,\mu))
%=\mathrm{image}(S\to d_S)$ is the entire hyperplane $\{x_1+\cdots+x_n=0\}\subset\R^n$.
%The Jacobian of the map $S\to\nu(S)$ at $S=0$, which is exactly the matrix of the linear map $S\to d_S$, has maximal possible rank $n-1$.
%Thus the map $S\to\nu(S)$ is locally surjective at $S=0$.   This implies that the image of the map $S\to\nu(S)$
%contains a small $\epsilon$-neighborhood $B_\epsilon(\nu)$ of $\nu$.  Thus $\nu$ is not a boundary point.
%\end{proof}

Let us now calculate the derivatives $d_S$.   We will apply the following known fact, whose proof is left as an exercise for the reader.

\begin{lemma}\label{lem:commutatorDS}   
The commutator $[D, S] := D S - S D$ of a real diagonal matrix $D$ and a skew-Hermitian matrix $S$ is a 
skew-Hermitian matrix whose diagonal entries are always zero and off-diagonal entries are given by
$$
[D,S]_{ij} = (d_{ii} - d_{jj}) s_{ij}.
$$
\end{lemma}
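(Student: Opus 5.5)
This is a direct entrywise computation. Write $DS - SD$ entry by entry using $(DS)_{ij} = \sum_k d_{ik}s_{kj}$, and then check skew-Hermitianity and the diagonal separately.

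\emph{Entry formula.} Since $D$ is diagonal, $d_{ik}=0$ for $k\neq i$, so $(DS)_{ij} = d_{ii}s_{ij}$. Likewise $(SD)_{ij} = \sum_k s_{ik}d_{kj} = s_{ij}d_{jj}$. Subtracting gives $[D,S]_{ij} = (d_{ii}-d_{jj})s_{ij}$ for every pair $(i,j)$, including $i=j$.

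\emph{Vanishing diagonal.} Setting $i=j$ in the formula gives the factor $d_{ii}-d_{ii}=0$. So every diagonal entry of $[D,S]$ is zero, whatever the diagonal entries of $S$ are. (These are purely imaginary, since $s_{ii} = -\overline{s_{ii}}$.)

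\emph{Skew-Hermitian.} Because $D$ is real and diagonal, $D^*=D$. Together with $S^*=-S$,
$$[D,S]^* = (DS-SD)^* = S^*D^* - D^*S^* = -SD + DS = [D,S].$$
This identity needs care: it shows $[D,S]$ is Hermitian, not skew-Hermitian. Checking it against the entry formula confirms this. The $(j,i)$ entry is $(d_{jj}-d_{ii})s_{ji} = (d_{jj}-d_{ii})(-\overline{s_{ij}}) = (d_{ii}-d_{jj})\overline{s_{ij}} = \overline{[D,S]_{ij}}$, using that $d_{ii}-d_{jj}$ is real. So $[D,S]$ is in fact Hermitian with zero diagonal.

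Two consequences of this correction are worth recording. First, the entry formula and the vanishing diagonal, which are presumably what the paper uses next (computing $d_S$ from diagonal entries of perturbation terms in the eigenbasis), hold exactly as stated. Second, the word ``skew-Hermitian'' in the statement should be read as ``Hermitian.'' Hermitian is also what the later application wants, since $e^{-tS}Be^{tS}$ has derivative $[B,S]$ at $t=0$ and that derivative must be Hermitian. There is no real obstacle in this proof; the only point needing care is this sign check in the adjoint computation.
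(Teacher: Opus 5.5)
Your entrywise computation is correct, and it is exactly the exercise the paper leaves to the reader; the paper gives no proof of its own. You are also right that ``skew-Hermitian'' in the statement is an error. Since $D^*=D$ and $S^*=-S$, we get $[D,S]^*=[D,S]$, so the commutator is Hermitian, and it is skew-Hermitian only when it vanishes. For example, $D=\mathrm{diag}(1,0)$ and $S=E_{12}-E_{21}$ give $[D,S]=E_{12}+E_{21}$.

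The slip is harmless for the paper, which only uses the entry formula and the vanishing diagonal. That is all it needs for $Z=[Y'(0)^*Y,D_\nu]$ in the proof of Lemma~\ref{lem:d_S}. Moreover, Lemma~\ref{lemma:computation} itself writes $[B,S]=(\mu_p-\mu_q)(\tau E_{pq}+\bar\tau E_{qp})$, which is Hermitian, in agreement with your correction.
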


Let $Y$ be a unique (up to rescaling of columns) unitary matrix that diagonalizes $A+B$:
%= UD_\lambda U^{-1} + D_\mu$:
$$
A+B = Y D_\nu Y^{-1}
$$
The columns vectors $Y_i$ of $Y$ are the eigenvectors of $A+B$ with eigenvalues $\nu_i$.

Let $\langle u,v\rangle:= u^* v$ denote the standard Hermitian inner product of two complex column vectors.
%$u$ and $v$ .  

\begin{lemma}\label{lem:d_S} The vector $d_S$ is the vector formed by the diagonal entries of the matrix
$Y^*[B,S]\,Y$, where $[B,S]:=BS-SB$, the commutator of matrices $B$ and $S$.   
Equivalently, the $i$th coordinate of vector $d_S$ is 
$$
Y_i^*[B,S]\,Y_i = \langle Y_i, [B,S]\,Y_i\rangle.
$$
\end{lemma}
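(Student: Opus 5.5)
This is first-order (Hellmann–Feynman) perturbation theory for a simple eigenvalue. Write $C(t) := A + e^{-tS}Be^{tS}$. Since $S$ is skew-Hermitian, $e^{tS}$ is unitary, so $C(t)$ is Hermitian, and $C(0)=A+B$. Its derivative at $t=0$ is
$$
C'(0) = -SB + BS = [B,S].
$$
Because the parts of $\nu$ are distinct, each eigenvalue $\nu_i(t)$ of $C(t)$ stays simple for $t\in[-a,a]$. By standard analytic perturbation theory for a simple eigenvalue (or the implicit function theorem applied to the characteristic polynomial, which has a simple root), both $\nu_i(t)$ and a unit eigenvector $y_i(t)$ can be chosen differentiable near $t=0$, with $y_i(0)=Y_i$.

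Differentiate the eigenvalue equation $C(t)y_i(t)=\nu_i(t)y_i(t)$ at $t=0$:
$$
C'(0)Y_i + (A+B)y_i'(0) = \nu_i'(0)Y_i + \nu_i\, y_i'(0).
$$
Take the inner product with $Y_i$ on the left, i.e. multiply by $Y_i^*$. Since $A+B$ is Hermitian and $Y_i$ is an eigenvector with eigenvalue $\nu_i$, we have $Y_i^*(A+B)=\nu_i Y_i^*$. So the term $Y_i^*(A+B)y_i'(0)$ equals $\nu_i Y_i^* y_i'(0)$ and cancels the corresponding term on the right. With $Y_i^*Y_i=1$, this leaves
$$
\nu_i'(0) = Y_i^*[B,S]\,Y_i = \langle Y_i,[B,S]Y_i\rangle,
$$
which is the $i$th diagonal entry of $Y^*[B,S]\,Y$. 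As a consistency check, $[B,S]$ is Hermitian (a commutator of a Hermitian and a skew-Hermitian matrix), so this quantity is real.

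The only step needing care is the differentiability of $\nu_i(t)$ and $y_i(t)$. To avoid choosing differentiable eigenvectors, I would instead use the Rayleigh-quotient/variational characterization. Alternatively, work with the spectral projector
$$
P_i(t) = \frac{1}{2\pi i}\oint (z-C(t))^{-1}\,dz,
$$
taken around a small circle isolating $\nu_i$. This projector is smooth in $t$ and gives $\nu_i(t)=\mathrm{tr}(C(t)P_i(t))$. Differentiating and using $P_iP_i'P_i=0$ yields the same formula. This smoothness is where the distinctness hypothesis on $\nu$ is used.
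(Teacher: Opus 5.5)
Your proof is correct and is essentially the paper's argument. The paper differentiates $D_{\nu(t)}=Y(t)^*(A+B(t))Y(t)$ and shows that the terms involving $Y'(0)$ have zero diagonal because $Y'(0)^*Y$ is skew-Hermitian, which is the matrix form of your column-by-column Hellmann--Feynman cancellation. Your version also justifies the differentiability of the eigenvectors (via simplicity of the eigenvalues, the implicit function theorem, or the spectral projector), which the paper simply assumes.
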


\begin{proof} 
Fix a skew-Hermitian matrix $S$.  Let $B(t) := e^{-tS}B e^{tS}$.
Let $Y(t)$ be a family of unitary matrices that diagonalize $A+ B(t)$,
i.e.,
$A+B(t) = Y(t) D_{\nu(t)} Y^{-1}(t)$, with $Y(0)=Y$. We have
$$
D_{\nu(t)} = Y(t)^*(A + B(t))Y(t).
$$
By differentiating the product of matrices in the right hand side using the chain rule, we obtain 
$$
D_{\nu'(t)}=
Y'(t)^* (A + B(t))Y(t) + Y(t)^* B'(t)Y(t) + Y(t)^*(A + B(t))Y'(t),
$$
where derivatives of matrices are taken termwise.
We have 
$$
B'(t) = -S e^{-tS}B e^{tS} + e^{-tS}B S e^{tS}.
$$
Specializing at $t=0$, we get
$$
D_{\nu'(0)}= Y'(0)^*(A+B)Y + Y^*[B,S]Y + Y^*(A+B)Y'(0). 
$$

We claim that the sum of two terms $Z:=Y'(0)^*(A+B)Y+Y^*(A+B)Y'(0)$ in the right hand side of the above equation has all diagonal entries zero.
Indeed, since $Y(t)$ is unitary,
taking the derivative of $Y(t)^*\, Y(t)=I$ at $t=0$, we get
$$
Y'(0)^* \,Y + Y^* \,Y'(0) = 0. 
$$
Thus $Y'(0)^*\, Y = -  Y^*\, Y'(0)$ is skew-Hermitian.
Since $(A+B)Y = Y D_\nu$ and $Y^*(A+B) = D_\nu Y^*$, we get
$$
Z= Y'(0)^* \,Y \, D_\nu + D_\nu \,Y^* \,Y'(0) = [Y'(0)^*\,Y, D_\nu],
$$
and the commutator of a diagonal and a skew-Hermitian matrix results in a skew-Hermitian matrix whose diagonal entries are zero,
see Lemma~\ref{lem:commutatorDS}.

Finally, $\nu'(0)$ is exactly the vector $d_S$.   Thus $d_S$ is the vector formed by the diagonal entries of matrix $Y^*[B,S]Y$, as needed. 
\end{proof}

The space of skew-Hermitian matrices $S$ is the $\R$-span of the 
matrices $S_{pq}^\tau := \tau E_{pq} - \bar\tau E_{qp}$, where $p,q\in[n]$,
$E_{pq}$ is the matrix with $1$ in position $(p,q)$ and zeroes elsewhere, and  $\tau\in\C$.
%and $E_{pq}$ is the matrix with $1$ in position $(p,q)$ and zeroes elsewhere. 
Let
$$
d_{pq}^\tau := d_{S_{pq}^\tau}\in\R^n
$$
Let us specialize Lemma~\ref{lem:d_S} to $S=S_{pq}^\tau$.

%\begin{definition}\label{def:V_{nu}}
%Consider the derivative
%$$
%d_{pq}^\theta := \left(d\,\nu^{\theta}_{pq}(t)/dt\right)_{t=0}\in\R^n
%$$
%of function $\nu_{pq}^\theta(t)$ at $t = 0$.  
%Clearly, vector $d_{pq}^\theta$ belongs to the tangent space $T_\nu(\Horn_n(\lambda,\mu))$.

    %as $T$ shifts infinitesimally in the direction of $T^{\theta}_{(p,q)}$. 
%    Let $V_{\nu}\subset\R^n$ be the $\R$-span of all vectors of the form $d_{pq}^\theta$:
%    \[V_{\nu} := \Span_{\R}(d^{\theta}_{pq})\]
%\end{definition}

%Clearly, $V_\nu$ is a subspace of the tangent space $T_\nu(\Horn_n(\lambda,\mu))$.
%Notice that $V_{\nu}$ needs to be a \textit{real} vector space: it represents all directions in which $\nu$ can vary, and spectra of Hermitian matrices are always real.

\begin{lemma}\label{lemma:computation}
The $i$th coordinate of vector $d^{\tau}_{pq}$ is given by 
\[2(\mu_p-\mu_q)\mathrm{Re}(\tau \overline{Y}_{pi}Y_{qi} )\]
\end{lemma}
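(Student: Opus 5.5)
We will specialize Lemma~\ref{lem:d_S} to $S=S_{pq}^\tau$ and compute the diagonal entries of $Y^*[B,S]Y$ directly. Since $B=D_\mu$ is diagonal, Lemma~\ref{lem:commutatorDS} applies. It gives $[D_\mu,S]_{ij}=(\mu_i-\mu_j)s_{ij}$. For $S=S^\tau_{pq}=\tau E_{pq}-\bar\tau E_{qp}$ the only nonzero entries are $s_{pq}=\tau$ and $s_{qp}=-\bar\tau$, assuming $p\neq q$. The case $p=q$ is degenerate: then $S$ is diagonal and commutes with $D_\mu$, so both sides vanish because $\mu_p-\mu_q=0$. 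For $p\ne q$ this yields
$$
[B,S]=(\mu_p-\mu_q)\tau E_{pq}+(\mu_q-\mu_p)(-\bar\tau)E_{qp}=(\mu_p-\mu_q)\bigl(\tau E_{pq}+\bar\tau E_{qp}\bigr).
$$

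Next we evaluate the $i$th coordinate $\langle Y_i,[B,S]Y_i\rangle=Y_i^*[B,S]Y_i$. For any column vector $v$ we have $v^*E_{pq}v=\bar v_p v_q$. Hence
$$
Y_i^*[B,S]Y_i=(\mu_p-\mu_q)\bigl(\tau\,\overline{Y}_{pi}Y_{qi}+\bar\tau\,\overline{Y}_{qi}Y_{pi}\bigr).
$$
The second summand is the complex conjugate of the first. The bracket therefore equals $2\,\mathrm{Re}(\tau\overline{Y}_{pi}Y_{qi})$, which is the claimed formula. As a consistency check, the result is real, as it must be since $[B,S]$ is Hermitian. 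This holds because $B$ is Hermitian and $S$ is skew-Hermitian: $[B,S]^*=[S^*,B]=[-S,B]=[B,S]$.

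The only step needing care is sign bookkeeping in the commutator. Specifically, the entry $(q,p)$ picks up the factor $(\mu_q-\mu_p)$ times $-\bar\tau$, and this product becomes $+(\mu_p-\mu_q)\bar\tau$. The conjugation convention $\langle u,v\rangle=u^*v$ then places the bar on $Y_{pi}$ rather than on $Y_{qi}$. I do not expect any genuine obstacle; the lemma is a direct computation from Lemmas~\ref{lem:commutatorDS} and~\ref{lem:d_S}.
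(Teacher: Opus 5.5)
Your proof is correct and follows the paper's route exactly: you specialize Lemma~\ref{lem:d_S} to $S=S^\tau_{pq}$, compute $[D_\mu,S^\tau_{pq}]=(\mu_p-\mu_q)(\tau E_{pq}+\bar\tau E_{qp})$ via Lemma~\ref{lem:commutatorDS}, and evaluate the quadratic form $Y_i^*[B,S]Y_i$. Your observation that the two summands are complex conjugates is what produces the factor $2$ in the statement; the paper's displayed last step writes $(\mu_p-\mu_q)\mathrm{Re}(\tau\overline{Y}_{pi}Y_{qi})$ and drops this factor, so your computation is the more careful one.
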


\begin{proof}
Calculating the commutator $[B,S]$ for $B = D_\mu$ and $S=S_{pq}^\tau$ using Lemma~\ref{lem:commutatorDS}, we obtain that
%Finally, specializing \eqref{eq:nu_diff_commutator}
%at $S=S_{pq}^\tau := \tau E_{pq} -\bar\tau E_{qp}$,
%recalling that $B$ is the diagonal matrix $D_\mu$, and calculating the commutator $[B,S]$, we obtain that
the $i$th coordinate of $d_{pq}^\tau$ is
$$
%\langle(\mu_p-\mu_q)(\tau E_{pq}+\bar\tau E_{qp})Y_i,Y_i\rangle =
(\mu_p-\mu_q) Y_i^*(\tau E_{pq}+\bar\tau E_{qp})Y_i = 
(\mu_p-\mu_q)\mathrm{Re}(\tau \overline{Y}_{pi}Y_{qi}),
$$
as needed.
\end{proof}

%\begin{proposition}\label{prop:dim}
%    If $\nu$ is a boundary point of $\Horn_n(\lambda, \mu)$, the dimension of the real vector space $V_{\nu}$ is at most $n-2$. 
%\end{proposition}

%\begin{proof}
%   The restriction $\sum_{i} \nu_i = \sum_i \lambda_i + \sum_i \mu_i$ means that $\Horn_n(\lambda, \mu)$ is contained in a codimension-$1$ hyperplane of $\R^n$. The space $V_{\nu}$ is spanned by tangent vectors to differentiable curves passing through $\nu$ that stay within $\Horn_n(\lambda, \mu)$. If these tangent vectors span the entire hyperplane, then $\nu$ could not have been a boundary point of $\Horn_n(\lambda, \mu)$. 
%\end{proof}

We will need the following general fact about complex vector spaces.

\begin{lemma}\label{lemma:Re_complex}
    For any complex vector space $V\subset\C^n$ and $\mathrm{Re}(V):=\{\mathrm{Re}(v)\mid v\in V\}\subset\R^n$, we have
    \[\dim_{\mathbb{C}}(V) \leq \dim_{\R}(\mathrm{Re}(V))\]
\end{lemma}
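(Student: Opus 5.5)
The natural approach is to compare $V$ with its image under the real-part map. Consider the $\R$-linear map $\mathrm{Re}\colon V\to\R^n$, $v\mapsto \mathrm{Re}(v)$. Its image is $\mathrm{Re}(V)$, and it is a real vector space. The plan is to bound the real dimension of its kernel and then use rank--nullity over $\R$, recalling that $\dim_\R V = 2\dim_\C V$.

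The first step is to identify the kernel. It is $K:=\{v\in V\mid \mathrm{Re}(v)=0\}=V\cap i\R^n$. The key observation is that multiplication by $i$ maps $K$ injectively into $V\cap\R^n$, and this uses that $V$ is a complex subspace. Next, $V\cap \R^n$ itself injects into $\mathrm{Re}(V)$, since $\mathrm{Re}(w)=w$ for real $w$. Hence
\[
\dim_\R K=\dim_\R\bigl(V\cap\R^n\bigr)\leq \dim_\R \mathrm{Re}(V).
\]

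The second step is to apply rank--nullity to the $\R$-linear map $\mathrm{Re}|_V$:
\[
2\dim_\C V=\dim_\R V=\dim_\R \mathrm{Re}(V)+\dim_\R K\leq 2\dim_\R\mathrm{Re}(V).
\]
Dividing by $2$ gives $\dim_\C V\leq\dim_\R\mathrm{Re}(V)$.

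The only step that needs care is the kernel bound, specifically noticing that $iK\subset V\cap\R^n\subset\mathrm{Re}(V)$. Everything else is routine linear algebra.

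An alternative route, if one prefers bases, is to note that if $\dim_\R\mathrm{Re}(V)=r$, then $V\subset \mathrm{Re}(V)\otimes\C$. This holds because $\mathrm{Im}(v)=-\mathrm{Re}(iv)\in\mathrm{Re}(V)$, so both real and imaginary parts of every $v\in V$ lie in $\mathrm{Re}(V)$. Since $\mathrm{Re}(V)\otimes\C$ has complex dimension $r$, this gives $\dim_\C V\le r$ immediately. I would in fact present this second argument, as it is a one-line proof.
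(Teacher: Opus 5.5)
Your proposal is correct, and the argument you say you would present (showing $V\subset \mathrm{Re}(V)\otimes\C$ via $v=\mathrm{Re}(v)-i\,\mathrm{Re}(iv)$ and then comparing dimensions) is exactly the paper's proof. Your first argument, which bounds the kernel of $\mathrm{Re}|_V$ using $iK\subseteq V\cap\R^n\subseteq\mathrm{Re}(V)$ and then applies rank--nullity over $\R$, is also valid, just slightly longer.
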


\begin{proof} Let $\mathrm{Re}(V)\otimes \C\subset\C^n$ be the $\C$-span of vectors in $\mathrm{Re}(V)$.  We have
$V\subset \mathrm{Re}(V)\otimes \C$.  Indeed, for $v\in V$, we have $\mathrm{Re}(v), \mathrm{Re}(i\,v)\in\mathrm{Re}(V)$
and $v=\mathrm{Re}(v) - i\, \mathrm{Re}(i\,v)\in \mathrm{Re}(V)\otimes \C$.
Thus 
$
\dim_\C V \leq \dim_\C (\mathrm{Re}(V)\otimes\C) = 
\dim_\R \mathrm{Re}(V)$.
\end{proof}

%\begin{definition}\label{def:G}

    Let $G = (G_{i,pq})$ be the $n \times n(n-1)$ \textit{complex} matrix whose $(i, pq)$th entry is $G_{i,\,pq}=\overline{Y_{pi}}Y_{qi}$,
    for all $i,p,q\in[n]$ such that $p\ne q$.
%\end{definition}

\begin{lemma}\label{lemma:n-2}
If $\nu\in \partial \, \Horn_n(\lambda,\mu)$ is a boundary point,
    then the matrix $G$ has \textit{complex} rank at most $n-2$. 
\end{lemma}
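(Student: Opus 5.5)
The plan is to combine Lemma~\ref{lem:boundary=>dim} with Lemma~\ref{lemma:computation} and then apply Lemma~\ref{lemma:Re_complex}. Throughout we are in the setting of this subsection, so $\mu$ and $\nu$ have distinct parts. Since $\nu$ is a boundary point, Lemma~\ref{lem:boundary=>dim} gives $\dim_\R T_\nu\le n-2$. The task is to exhibit $T_\nu$ as something that controls the complex column space of $G$.

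First, I compute $T_\nu$ in terms of $G$. The matrices $S^\tau_{pq}$ span all skew-Hermitian matrices over $\R$, and $S\mapsto d_S$ is $\R$-linear. Hence $T_\nu$ is the $\R$-span of the vectors $d^\tau_{pq}$. By Lemma~\ref{lemma:computation}, the vector $d^\tau_{pq}$ has $i$th coordinate $2(\mu_p-\mu_q)\mathrm{Re}(\tau G_{i,pq})$. That is,
$$d^\tau_{pq}=2(\mu_p-\mu_q)\,\mathrm{Re}(\tau\, G_{pq}),$$
where $G_{pq}\in\C^n$ denotes the $pq$th column of $G$. The diagonal terms $p=q$ contribute zero, which is consistent with $G$ only having columns for $p\ne q$.

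Next, let $V\subset\C^n$ be the complex column space of $G$, so that $\mathrm{rank}_\C G=\dim_\C V$. I claim $\mathrm{Re}(V)=T_\nu$. An element of $V$ has the form $v=\sum_{p\ne q}c_{pq}G_{pq}$ with $c_{pq}\in\C$, so
$$\mathrm{Re}(v)=\sum_{p\ne q}\mathrm{Re}(c_{pq}G_{pq}).$$
Because $\mu$ has distinct parts, $\mu_p-\mu_q\neq0$ for $p\ne q$. So I may set $\tau=c_{pq}/(2(\mu_p-\mu_q))$, and then each summand equals $d^\tau_{pq}$. This shows $\mathrm{Re}(v)\in T_\nu$, giving $\mathrm{Re}(V)\subseteq T_\nu$. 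The reverse inclusion follows from the same identity together with $\R$-linearity. Only the inclusion $\mathrm{Re}(V)\subseteq T_\nu$ is actually needed.

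Finally, Lemma~\ref{lemma:Re_complex} gives
$$\mathrm{rank}_\C G=\dim_\C V\le\dim_\R\mathrm{Re}(V)\le\dim_\R T_\nu\le n-2.$$

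The only delicate point is the role of the distinctness of $\mu$. It is exactly what lets the factor $(\mu_p-\mu_q)$ be absorbed into $\tau$. Without it, some columns of $G$ would not be reflected in $T_\nu$ at all.
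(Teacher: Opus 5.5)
Your proof is correct and matches the paper's argument: you use Lemma~\ref{lemma:computation} together with the distinctness of $\mu$ to absorb the nonzero real factor $2(\mu_p-\mu_q)$ into $\tau$, so that $\mathrm{Re}$ of the complex column space of $G$ sits inside $T_\nu$, and then chain Lemma~\ref{lemma:Re_complex} with Lemma~\ref{lem:boundary=>dim}. Your remark that only the inclusion $\mathrm{Re}(V)\subseteq T_\nu$ is needed, rather than the equality the paper states, is a minor tightening and does not change the argument.
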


\begin{proof}
Let $V\subset\C^n$ be the complex vector space spanned by the columns of matrix $G$, that is,
by the vectors $G_{pq}$ whose $i$th coordinate is $\overline{Y_{pi}}Y_{qi}$. 
Then $\mathrm{Re}(V)\subset\R^n$ is the real vector space spanned by the vectors $\mathrm{Re}(\tau\, G_{pq})$,
for all pairs $(p,q)$, $p\ne q$, and $\tau \in \C$, which are exactly the tangent vectors $d_{pq}^\tau \in T_\nu (\lambda,\mu)$
rescaled by the factors $1\over{2(\mu_p-\mu_q)}$, see Lemma~\ref{lemma:computation}.
%Since $\nu$ is a boundary point of $\Horn_n(\lambda,\mu)$, 
(For this step, we need the assumption that parts of $\mu$ are distinct.)
Using  the bound $\dim T_\nu(\lambda,\mu)\leq n-2$ provided by Lemma~\ref{lem:boundary=>dim}, 
and Lemma~\ref{lemma:Re_complex}, we obtain
$$
\mathrm{rank}_\C(G)=\dim_\C V \leq \dim_\R\mathrm{Re}(V) = \dim_\R T_\nu\leq n -2,
$$
as needed.
\end{proof}

Define a related real $n \times n$ matrix $F$, whose $(i,p)$th entry is $F_{ip} = \overline{Y_{pi}} Y_{pi}= \lvert Y_{pi}\rvert^2$.
Since the matrix $Y$ is unitary, the matrix $F$ is doubly stochastic by construction.

\begin{lemma}\label{lemma:decomp2}
    The matrix $FF^*$ is decomposable. That is, there exists a permutation matrix $P$ and square matrices $M_1$ and $M_2$ such that 
    \[FF^* = P \begin{pmatrix}
        M_1 & 0\\
        0 & M_2
    \end{pmatrix}P^{-1}\]
\end{lemma}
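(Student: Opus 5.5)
The plan is to establish the identity $FF^* = I - GG^*$ announced in the proof outline, use Lemma~\ref{lemma:n-2} to bound the rank of $GG^*$, and then apply Lemma~\ref{lemma:perron-frobenius} to the stochastic matrix $FF^*$.

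\textbf{Step 1: the identity.} I would compute $(GG^*)_{ij}$ directly from the definition $G_{i,pq}=\overline{Y_{pi}}Y_{qi}$:
$$
(GG^*)_{ij}=\sum_{p\neq q}\overline{Y_{pi}}Y_{qi}\,Y_{pj}\overline{Y_{qj}}.
$$
Next I would add and subtract the diagonal terms $p=q$. The unrestricted double sum factors as
$$
\Bigl(\sum_p \overline{Y_{pi}}Y_{pj}\Bigr)\Bigl(\sum_q Y_{qi}\overline{Y_{qj}}\Bigr)=\langle Y_i,Y_j\rangle\,\overline{\langle Y_i,Y_j\rangle}=\delta_{ij},
$$
because the columns of the unitary matrix $Y$ are orthonormal. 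The diagonal terms contribute $\sum_p |Y_{pi}|^2|Y_{pj}|^2=\sum_p F_{ip}F_{jp}=(FF^T)_{ij}$. Since $F$ is real, $FF^T=FF^*$, and therefore $GG^*=I-FF^*$.

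\textbf{Step 2: eigenvalue $1$ has multiplicity at least two.} By Lemma~\ref{lemma:n-2}, $\mathrm{rank}(GG^*)=\mathrm{rank}(G)\le n-2$. Hence $\ker(GG^*)=\ker(I-FF^*)$ has dimension at least $2$. The matrix $FF^*$ is real symmetric, hence diagonalizable, so geometric and algebraic multiplicities agree. Thus $1$ is an eigenvalue of $FF^*$ of multiplicity at least $2$.

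\textbf{Step 3: stochasticity and conclusion.} The matrix $FF^*=FF^T$ has nonnegative entries. It is the product of the doubly stochastic matrices $F$ and $F^T$, so it is doubly stochastic and in particular row stochastic. Lemma~\ref{lemma:perron-frobenius} then gives the required decomposition of $FF^*$.

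I do not expect a genuine obstacle. The only delicate point is the bookkeeping in Step 1: one must check that the conjugations pair correctly, so that the full sum over $(p,q)$ is exactly $|\langle Y_i,Y_j\rangle|^2$, and that $F$ being real lets $FF^*$ be identified with $FF^T$.
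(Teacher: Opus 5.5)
Your proposal is correct and follows the paper's proof step for step: the identity $FF^*+GG^*=I$ from unitarity of $Y$ (your index bookkeeping is cleaner than the paper's), the rank bound from Lemma~\ref{lemma:n-2} giving eigenvalue $1$ of multiplicity at least two (your symmetry remark neatly settles algebraic versus geometric multiplicity), and then Lemma~\ref{lemma:perron-frobenius}. One caution: do not reduce to ``in particular row stochastic,'' because Lemma~\ref{lemma:perron-frobenius} as literally stated fails for merely row-stochastic matrices (take rows $(1,0,0)$, $(0,1,0)$, $(\tfrac12,\tfrac12,0)$); it is the double stochasticity and symmetry of $FF^*$, which you did establish, that make the decomposition follow.
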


\begin{proof}
We have
$$
FF^*+GG^* = I.
$$
Indeed,  the $(i,j)$th entry of $FF^*+GG^*$ is
    $$
    (FF^*+GG^*)_{ij} = \sum_{p=q} \bar Y_{pi} Y_{qi} Y_{jp} \bar Y_{jq} + \sum_{p\neq q} \bar Y_{pi} Y_{qi} Y_{pj} \bar Y_{qj}
    = \left(\sum_{p}\bar Y_{pi}Y_{jp}\right)\left(\sum_q Y_{qi}\bar Y_{jq}\right) = \delta_{ij},
    $$
    because $Y$ is unitary.
Since $GG^*$ has rank $\leq n-2$, that is, it has multiple eigenvalue 0, the matrix $FF^* = I - GG^*$ has multiple 
eigenvalue 1.   The matrix $FF^*$ is doubly stochastic because it is the product of two doubly stochastic matrices $F$ and $F^*$.
Now decomposability of $FF^*$ follows from the key Lemma~\ref{lemma:perron-frobenius}.
%, it suffices to show that $FF^*$ is stochastic and has $1$ as a multiple eigenvalue. 
    %Because $\{Y_i\}_{i \leq n}$ was an \textit{orthonormal} eigenbasis, $F$ is doubly stochastic,
    %The matrix $FF^*$ is doubly stochastic. Indeed, the product of doubly stochastic matrices $F$ and $F^*$ should be
    %doubly stochastic. 
    %
    %Next, we can compute 
    %\[GG^* = I - FF^*\]
    %using the orthonormality of the basis formed by columns $Y_i$ of matrix $Y$. Since the kernel of $GG^*$ has dimension at least $2$ by Lemma %\ref{lemma:n-2}, $1$ is indeed a multiple eigenvalue of $FF^*$. 
\end{proof}

\begin{lemma}\label{lem:F_decomposable}
    There exist permutation matrices $P, R$ and square matrices $N_1, N_2$ such that 
    \[F = P\begin{pmatrix}
        N_1 & 0\\
        0 & N_2
    \end{pmatrix}R\]
\end{lemma}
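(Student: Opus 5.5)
We deduce the decomposability of $F$ from that of $FF^*$ (Lemma~\ref{lemma:decomp2}) by looking at supports. Since $F$ has nonnegative entries, $(FF^*)_{ij}=\sum_p F_{ip}F_{jp}>0$ if and only if rows $i$ and $j$ of $F$ share a column $p$ with $F_{ip}>0$ and $F_{jp}>0$.

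Lemma~\ref{lemma:decomp2} gives a partition $[n]=I_1\sqcup I_2$ of the row indices, with both parts nonempty, such that $(FF^*)_{ij}=0$ whenever $i\in I_1$ and $j\in I_2$. For $k=1,2$, let $J_k$ be the set of columns $p$ with $F_{ip}>0$ for some $i\in I_k$. By the support observation, $J_1\cap J_2=\emptyset$.

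Next we use that $F$ is doubly stochastic. Row $i\in I_k$ has all its mass in columns of $J_k$. Summing the rows in $I_k$ therefore gives total mass $|I_k|$ supported on $J_k$. Each column sums to $1$, so this mass is at most $|J_k|$, i.e.\ $|I_k|\le |J_k|$. Since $J_1$ and $J_2$ are disjoint,
\[
n=|I_1|+|I_2|\le |J_1|+|J_2|\le n .
\]
Hence $|J_k|=|I_k|$ for $k=1,2$, and $J_1\sqcup J_2=[n]$. Moreover, every column in $J_k$ receives its full unit mass from rows in $I_k$. Consequently $F_{ip}=0$ whenever $i\in I_1, p\in J_2$ or $i\in I_2, p\in J_1$.

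Finally, choose permutation matrices $P$ and $R$ that reorder the rows so that $I_1$ comes first and the columns so that $J_1$ comes first. This puts $F$ in the form $P\,\mathrm{diag}(N_1,N_2)\,R$ with square blocks $N_1$ and $N_2$ of sizes $|I_1|$ and $|I_2|$. Both blocks are nontrivial because $I_1$ and $I_2$ are nonempty.

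The main subtlety is not assuming a priori that the column blocks are square. This comes out of the column-sum counting argument above, so no real obstacle remains beyond tracking the supports carefully.
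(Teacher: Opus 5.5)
Your proof is correct and follows essentially the same route as the paper. In both, the block-diagonal form of $FF^*$ together with nonnegativity of the entries of $F$ forces the two row classes to have disjoint column supports, and the columns are then permuted into block form; your column-sum counting argument spells out the squareness of the blocks, which the paper only asserts as following from double stochasticity.
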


\begin{proof}
    Let $F = P\begin{pmatrix}
    L_{11} & L_{12}\\
    L_{21} & L_{22}
\end{pmatrix}P^{-1}$, where $P$ is the permutation matrix from the decomposition of $FF^*$ given by Lemma \ref{lemma:decomp2}
and matrices $L_{ij}$ are blocks of the corresponding sizes. Note that $P^* = P^{-1}$.
Then the matrix $P^{-1}FF^*P$ equals
\[
%PFF^*P^{-1}=
\begin{pmatrix}
    L_{11} & L_{12}\\
    L_{21} & L_{22}
\end{pmatrix}\begin{pmatrix}
    L_{11}^* & L_{21}^*\\
    L_{12}^* & L_{22}^*
\end{pmatrix} = \begin{pmatrix}
    M_1 & 0 \\
    0 & M_2
\end{pmatrix}\]
For the product on the left side to be block diagonal, we must have $L_{11}L_{21}^* + L_{12}L_{22}^*$ equals the zero matrix $0$.
%, and, similarly, $ZX^*+TY^*= 0$. 
Because all entries of $F$ are \textit{nonnegative
real} numbers by definition, it follows that $L_{11}L_{21}^* = L_{12}L_{22}^* = 0$. That is, whenever a column of $L_{11}$ has a nonzero entry, all entries in the corresponding column of $L_{21}$ are zero, and, similarly, for $L_{12}$ and $L_{22}$. Moving all nonzero columns of $L_{12}$ leftwards via a permutation matrix, and similarly for $L_{12},L_{21},L_{22}$, we find 
\[F = P\begin{pmatrix}
    N_1 & 0\\
    0 & N_2
\end{pmatrix}R\]
for some permutation matrix $R$.
The claim that $N_1, N_2$ are \textit{square} matrices follows from the fact that $F$ was doubly stochastic. 
\end{proof}

Since $F$ is the matrix whose entries
are squares of absolute values of entries of the unitary matrix $Y$, 
Lemma~\ref{lem:F_decomposable} implies the following claim.

\begin{lemma}\label{lem:prop_decomposability}
    There exist permutation matrices $P, R$ and nontrivial unitary matrices $W_1, W_2$ such that 
    \[Y = P \begin{pmatrix}
        W_1 & 0\\
        0 & W_2\\
    \end{pmatrix}R\]
\end{lemma}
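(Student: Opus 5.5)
The plan is to read off the zero pattern of $Y$ directly from Lemma~\ref{lem:F_decomposable}, and then use unitarity of $Y$ to see that the two nonzero blocks are themselves unitary. Write $F = P\begin{pmatrix} N_1 & 0\\ 0 & N_2\end{pmatrix}R$, with $N_1$ of size $m\times m$, $N_2$ of size $(n-m)\times(n-m)$, and $0<m<n$. Nontriviality of the split comes from Lemma~\ref{lemma:decomp2}: the decomposition there has two nonempty blocks, and squareness is given by Lemma~\ref{lem:F_decomposable}.

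First I record which entries of $Y$ vanish. By definition $F_{ip}=|Y_{pi}|^2$, so $F$ is the entrywise squared modulus of the transpose $Y^T$. Hence $F_{ip}=0$ if and only if $Y_{pi}=0$. Taking transposes of the decomposition of $F$ gives $F^T = R^T\begin{pmatrix} N_1^T & 0\\ 0 & N_2^T\end{pmatrix}P^T$. Now $F^T$ is exactly the matrix $(|Y_{pi}|^2)_{p,i}$, so the zero pattern of $Y$ contains that of $R^{-1}\,\mathrm{diag}(*,*)\,P^{-1}$. In other words, $R\,Y\,P$ is block diagonal with blocks of sizes $m$ and $n-m$ (off-diagonal blocks identically zero). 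Write $R Y P = \begin{pmatrix} W_1 & 0\\ 0& W_2\end{pmatrix}$. Renaming the permutation matrices ($R^{-1}$ as the new $P$, $P^{-1}$ as the new $R$) gives the claimed form $Y = P'\,\mathrm{diag}(W_1,W_2)\,R'$.

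Next I show the blocks are unitary. Since permutation matrices are unitary, $\mathrm{diag}(W_1,W_2) = RYP$ is unitary. A block-diagonal matrix is unitary exactly when each diagonal block is unitary, because $\mathrm{diag}(W_1,W_2)^*\mathrm{diag}(W_1,W_2)=\mathrm{diag}(W_1^*W_1,W_2^*W_2)$. So $W_1$ and $W_2$ are unitary of sizes $m\geq1$ and $n-m\geq1$, and hence nontrivial.

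The only delicate point is bookkeeping: the transpose in $F_{ip}=|Y_{pi}|^2$ swaps the roles of the two permutations. One also has to confirm that the blocks in Lemma~\ref{lem:F_decomposable} are genuinely nonempty, which is where the multiplicity-$\geq 2$ eigenvalue argument (via Lemma~\ref{lemma:perron-frobenius}) is used. Everything else is immediate.
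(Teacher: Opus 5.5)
Your proposal is correct and takes the same route as the paper. The paper deduces this lemma in one line from Lemma~\ref{lem:F_decomposable}, using only that $F$ records the squared moduli of the entries of $Y$. Your version adds what the paper leaves implicit: the transpose bookkeeping, the unitarity of the diagonal blocks, and the nonemptiness of the blocks, which is inherited from the decomposition of $FF^*$.
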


Finally, let us show that Lemma~\ref{lem:prop_decomposability} implies Proposition \ref{prop:vertex_splitting}.

\begin{proof}[Proof of Proposition~\ref{prop:vertex_splitting}]
By the definition of $Y$, we have
%\begin{equation}\label{eq:Y}
$$
    UD_{\lambda}U^{-1} + D_{\mu} = Y D_{\nu} Y^{-1}.
$$
Plugging in $Y = P \begin{pmatrix}
        W_1 & 0\\
        0 & W_2\\
    \end{pmatrix}R$, 
    we left-multiply both sides of this equation
    by $P^{-1}$ and right-multiply by $P$, obtaining
%\begin{equation}\label{eq:W1}
$$
    P^{-1}UD_{\lambda}U^{-1}P + P^{-1}D_{\mu}P = 
    \begin{pmatrix}
    W_1 & 0 \\
    0 & W_2
    \end{pmatrix} R \,D_{\nu} R^{-1} 
    \begin{pmatrix}
        W_1^{-1} & 0 \\
        0 & W_2^{-1}
    \end{pmatrix}
$$
Since $P$ is a permutation matrix, $P^{-1}D_{\mu}P$ is also diagonal, with diagonal entries given by a permutation of $\lambda$. Similarly, the right hand side 
of the above equation is block diagonal. Thus first matrix in the left hand side should also be block diagonal.
We obtain that the three matrices 
 $A=U\,D_\lambda^{-1}$, $B=D_\mu $,
and $A+B=Y\,D_\nu Y^{-1}$
are simultaneously block-diagonalizable, and the eigenvalues in each block are given by subsequences of $\lambda, \mu$, and $\nu$. 
These subsequences give us the splitting. 
\end{proof}

\subsection{The case of repeated eigenvalues}

Let us now explain why Proposition~\ref{prop:vertex_splitting}, where we assume that $\mu$ and $\nu$ have distinct parts,
implies the general case of boundary point splitting
for arbitrary partitions $\lambda,\mu,\nu$, possibly with repeated parts.

%\begin{definition}
%Let us say that a Horn triple $(\lambda,\mu,\nu)$ is \textit{splittable} if there exist Horn triples 
%$(\lambda',\mu',\nu')$ and $(\lambda'',\mu'',\nu'')$ such that $\lambda=\lambda'\oplus\lambda''$, $\mu=\mu'\oplus \mu''$, and 
%$\nu=\nu'\oplus\nu''$.
%\end{definition}

%\begin{corollary}\label{cor:vertex_splitting_interior}
%Any triple $(\lambda,\mu,\nu)$ that satisfies the conditions of Proposition~\ref{prop:vertex_splitting}
%is splittable.
%\end{corollary}

\begin{lemma}\label{lem:Horn_splittable_closed}
    The set of all Horn triples $(\lambda,\mu,\nu)$ and its subset of splittable Horn triples are closed subsets of $\R^{3n}$ 
    in the usual topology on $\R^{3n}$ induced by the Euclidean norm.
\end{lemma}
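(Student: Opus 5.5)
The plan is a sequential compactness argument, using the compactness of the unitary group $U(n)$ and the continuity of the spectrum map $X\mapsto \mathrm{spec}(X)$, where $\mathrm{spec}$ lists eigenvalues in nonincreasing order.

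For the set of Horn triples, take a sequence $(\lambda^{(k)},\mu^{(k)},\nu^{(k)})$ of Horn triples converging to $(\lambda,\mu,\nu)$. By \eqref{eq:horn=spec}, each triple comes with a unitary $U_k$ such that $\nu^{(k)}=\mathrm{spec}(U_kD_{\lambda^{(k)}}U_k^{-1}+D_{\mu^{(k)}})$. Since $U(n)$ is compact, I pass to a subsequence with $U_k\to U$. The matrices $U_kD_{\lambda^{(k)}}U_k^{-1}+D_{\mu^{(k)}}$ then converge entrywise to $UD_\lambda U^{-1}+D_\mu$. The sorted eigenvalue vector is a continuous function of a Hermitian matrix; this follows from Weyl's perturbation inequality $|\nu_i(X)-\nu_i(X')|\le\lVert X-X'\rVert_{op}$, or from the Courant--Fischer min-max characterization. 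Hence $\nu=\mathrm{spec}(UD_\lambda U^{-1}+D_\mu)$, so $(\lambda,\mu,\nu)$ is a Horn triple.

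For splittable triples, take a convergent sequence of splittable Horn triples. Each one decomposes as $\lambda^{(k)}=\lambda'^{(k)}\oplus\lambda''^{(k)}$, $\mu^{(k)}=\mu'^{(k)}\oplus\mu''^{(k)}$, $\nu^{(k)}=\nu'^{(k)}\oplus\nu''^{(k)}$, with size $m_k\in\{1,\dots,n-1\}$. There are finitely many possible sizes, and a split is recorded by which index sets $I,J,K\subset[n]$ of size $m_k$ the primed parts occupy. So I pass to a subsequence on which $m_k=m$ and $I,J,K$ are constant. Then $\lambda'^{(k)}=\lambda^{(k)}_I$ and the other pieces are coordinate restrictions of the converging vectors, so they converge to $\lambda_I,\lambda_{I^c}$, and similarly for $\mu$ and $\nu$. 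Restricting a nonincreasing vector to an index set keeps it nonincreasing, so the limits lie in the relevant Weyl chambers. Moreover $\lambda_I\oplus\lambda_{I^c}=\lambda$, because concatenating and sorting recovers $\lambda$. By the first part, applied in dimensions $m$ and $n-m$, the limits $(\lambda_I,\mu_J,\nu_K)$ and $(\lambda_{I^c},\mu_{J^c},\nu_{K^c})$ are Horn triples. Therefore the limit triple is splittable.

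The only step needing care is the bookkeeping that makes the decomposition data constant along a subsequence. Recording splits by index sets $I,J,K$ handles this: ties in $\nu^{(k)}$ may make the choice of index sets non-unique, but any valid choice works, and the pigeonhole principle over finitely many choices applies. The continuity of sorted eigenvalues is standard, and I would simply cite Weyl's inequality for it.
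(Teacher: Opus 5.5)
Your proof is correct and follows the paper's route. For the first part, the paper simply says closedness follows from the parametrization \eqref{eq:horn=spec}; your compactness of $U(n)$ plus continuity of sorted spectra is the natural way to fill that in. For splittable triples, the paper writes the set as a finite union over $(I,J,K)$ of closed sets of $(I,J,K)$-splittable triples, which is exactly your subsequence argument that fixes the index sets by pigeonhole.
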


\begin{proof}  
%{eq:horn=spec}
%By the definition,
%$$
%\Horn_n(\lambda,\mu)=\{\mathrm{spec}(UD_\lambda U^{-1} + D_\mu)\mid U \textrm{ is unitary}\}.
%$$
The facts that $\Horn_n(\lambda,\mu)$ is a compact subset of $\R^n$ that continuously depends on $\lambda$ and $\mu$,
and the set of all Horn triples $(\lambda,\mu,\nu)$ is a closed subset of $\R^{3n}$ follow immediately from the 
definition, see   \eqref{eq:horn=spec}.

For a subset $I=\{i_1,\dots,i_m\}\subset [n]$, let $\lambda_I:=(\lambda_{i_1},\dots,\lambda_{i_m})$, and let $I^c:=[n]\setminus I$.
%If $\lambda=(\lambda_1 \dots \lambda_n)\in \mathfrak{C}_n$ and $I = \{i_1, i_2 \dots i_m\} \subseteq [n]$, we use $\lambda_I$ to denote the partition $(\lambda_{i_1}, \lambda_{i_2} \dots \lambda_{i_m})$. The complement of $I$ in $[n]$ is denoted $I^c$, and $\lambda_{I^c}$ is defined analogously. For example, if $\lambda = (5,3,2,2)$, $I = \{1,3\}$, then $\lambda_I = (5,2)$ and $\lambda_{I^c} = (3,2)$. 
Let us say that a Horn triple $(\lambda,\mu,\nu)$ is \textit{$(I,J,K)$-splittable},
for proper non-empty subsets $I,J,K \subset [n]$ of the same cardinality $m$, if 
$\nu_K \in \Horn_{m}(\lambda_I, \mu_J)$ and $\nu_{K^c} \in \Horn_{n-m}(\lambda_{I^c}, \mu_{J^c})$. 
Clearly, a Horn triple $(\lambda,\mu,\nu)$ is splittable if and only if it is $(I,J,K)$-splittable for some $I$, $J$, $K$.
The set of $(I,J,K)$-splittable triples $(\lambda,\mu,\nu)\in\R^{3n}$ is a closed subset of $\R^{3n}$.
Thus the set of all splittable triples is also closed,
%The subset of points $\nu\in\Horn_n(\lambda,\nu)$ such that $(\lambda,\mu,\nu)$ is $(I,J,K)$-splittable is a compact subset of $\Horn_n(\lambda,\mu)$.
%(In fact, this subset is a convex polytope.) 
%Thus the subset of points $\nu\in\Horn_n(\lambda,\mu)$ such that $(\lambda,\mu,\nu)$ is splittable is also compact, 
because it is the union of a \textit{finite} collection of closed sets.
%Moreover, the set of all splittable Horn triples $(\lambda,\mu,\nu)$ is a closed subset of $\R^{3n}$.
\end{proof}

Now, we can finish the proof of Theorem~\ref{thm:boundary_splitting_horn}.

%\begin{proposition}\label{prop:vertex_splitting_general}
%For $n\geq 2$, let $\nu$ be any extreme point of $\Horn_n(\lambda,\mu)$.   Then the Horn triple $(\lambda,\mu,\nu)$ is splittable.
%\end{proposition}

\begin{proof}[Proof of Theorem~\ref{thm:boundary_splitting_horn}]
We need to show that any Horn triple $(\lambda,\mu,\nu)$, where $\nu=\mathrm{spec}(UD_\lambda U^{-1} + D_\mu)$ 
is a boundary point $\nu\in \partial\, \Horn_n(\lambda,\mu)$, is splittable.
%If $\nu=(a,\dots,a)$, then we have splitting for obvious reasons.   In this case $A+B$ should be $aI$, and thus matrices $A$, $B$, $A+B$
%are simultaneously diagonalizable.   In the rest of the proof, we assume that $\nu$ is not of this form.
%
By contradiction, assume that this Horn triple $(\lambda,\mu,\nu)$ is \textit{not} splittable.

%We will deduce that $\nu$ is \textit{not} an extreme point of $\Horn_n(\lambda,\mu)$.

Since the subset of splittable triples 
is closed and its complement is open, there exists $\epsilon>0$ such that
any triple $(\lambda, \mu',\nu')$
satisfying the conditions:
\begin{enumerate}
\item[(a)]
$\mu',\nu'\in\mathfrak{C}_n$,
\item[(b)] 
$\sum \lambda_i + \sum \mu'_i = \sum \nu'_i$,
\item[(c)]
$\lVert\nu'-\nu\rVert \leq \epsilon$ and $\lVert\mu'-\mu\rVert \leq \epsilon$,
\item[(d)] 
$\lVert\nu-\mathrm{spec}(U D_\lambda U^{-1} - D_{\mu'})\rVert\leq \epsilon$
\end{enumerate}
is also \textit{not} splittable. 

%A priori, we do not know that all such triples $(\lambda,\mu',\nu')$ are Horn triples.  
Let us show that all triples $(\lambda,\mu',\nu')$ satisfying conditions (a)--(d) are, in fact, Horn triples.
Assume $(\lambda,\mu',\nu')$ is a triple satisfying these conditions, which is not a Horn triple, and
such that $\mu'$ and $\nu'$ are 
\textit{strictly} in the interior $\mathfrak{C}_n$.   Consider the triple $(\lambda,\mu', \nu'')$,
where $\nu'' = \mathrm{spec}(D_\lambda - U^{-1} D_{\mu'} U)$.
By the definition, $(\lambda,\mu', \nu'')$ is a Horn triple and it also satisfies conditions (a)--(d).

Let us connect these two triples by the straight line path $p:[0,1]\to\ \R^{3n}$ such that $p(0)=(\lambda,\mu',\nu')$ and $p(1)=
(\lambda,\mu',\nu'')$.
Define the number $t_{\mathrm{inf}} \in[0,1]$ as the infimum:
$$
t_{\mathrm{inf}} = \mathrm{inf} \{t\in[0,1] \mid p(t) \textrm{ is a Horn triple}\} 
$$
Then
the triple $(\lambda,\mu', \nu'''):=p(t_{\mathrm{inf}})$ is a Horn triple, because, by Lemma~\ref{lem:Horn_splittable_closed}, the set of Horn triples is closed.  
The triple $(\lambda,\mu',\nu''')$ also satisfies the above conditions (a)--(d).
We have $t_{\mathrm{inf}}>0$, because we assumed that $(\lambda',\mu',\nu')=p(0)$ is not a Horn triple.

Now, by our definition of boundary points, $\nu'''$ is a boundary point of $\Horn_n(\lambda,\mu')$, because any 
$\delta$-neighborhood $\mathrm{B}_\delta(\nu''')$, $\delta>0$, of this point contains a point with non-Horn triple $p(t)$, where $t$ is slightly smaller that $t_{\mathrm{inf}}$.
This means that all conditions of Proposition~\ref{prop:vertex_splitting} hold for the triple $(\lambda,\mu',\nu''')$.
By Proposition~\ref{prop:vertex_splitting}, the triple $(\lambda,\mu',\nu''')$ is splittable.
We get a contradiction.

This shows that all triples $(\lambda,\mu',\nu')$ satisfying conditions (a)--(d), with $\mu'$ and $\nu'$ strictly in the interior of $\mathfrak{C}_n$ are Horn triples.
Since, by Lemma~\ref{lem:Horn_splittable_closed}, the set of Horn triples is closed, we deduce that all triples $(\lambda,\mu',\nu')$ satisfying (a)--(d),
including the triples with $\mu'$ and $\nu'$ on the boundary of $\mathfrak{C}_n$, are Horn triples.

Let us now specialize the conditions (a)--(d) to the case when $\mu'=\mu$.   We proved that $\Horn_n(\lambda,\mu)$ contains all
points $\nu'$ such that (a) $\nu'\in\mathfrak{C}_n$; (b) $\nu'$ belongs to the affine hyperplane 
$\{x_1+\cdots+x_n = \sum \lambda_i + \sum \mu_i\}$; and (c) $\rVert\nu'-\nu\rVert \leq \epsilon$.
This set is exactly the intersection of the $(n-1)$-dimensional ball $\mathrm{B}_\epsilon(\nu)$ with $\mathfrak{C}_n$.
Thus $\nu$ is not a boundary point.  We got a contradiction with our assumption that $\nu$ is a boundary point, which proves theorem in the general case.
%Finally, for all points $\nu\in\mathfrak{C}_n$, except points of the form $\nu=(a,\dots,a)$, the set
%$B_\epsilon(\nu) \cap \mathfrak{C}_n$ contains a line segment of the form $[\nu-b, \nu+b]$, for some nonzero vector $b$.
%%For example, we can take some vector $b$ of the form $\epsilon(e_i-e_j)/\sqrt{2}$, which goes along an edge of the cone $\mathfrak{C}_n$.
%More precisely, we can take the same vector $b$ as the vector we used in the proof of Theorem~\ref{thm:prop:breathing_paths}
%(vertex splitting for $\Honey_n(\lambda,\mu)$).
%Thus $\nu$ is not an extreme point of $\Horn_n(\lambda,\mu)$.   
%
%We proved that all non-splittable points of $\Horn_n(\lambda,\mu)$
%are not extreme points, as needed.
\end{proof}

\bibliographystyle{alpha}
\bibliography{refs}

\end{document}